\newtheorem{theorem}{Theorem}[section]
\newtheorem{lemma}[theorem]{Lemma}
\newtheorem{proposition}[theorem]{Proposition}
\newtheorem{definition}[theorem]{Definition}
\newtheorem{remark}[theorem]{Remark}
\newtheorem{corollary}[theorem]{Corollary}
\numberwithin{equation}{section}
\begin{document}

\title[Approximation orders of real numbers by beta-expansions]{Approximation orders of real numbers by $\beta$-expansions}

\author {Lulu Fang, Min Wu and Bing Li$^{*}$}
\address{School of Mathematics, South China University of Technology, Guangzhou 510640, P.R. China}
\email{f.lulu@mail.scut.edu.cn, wumin@scut.edu.cn and scbingli@scut.edu.cn}


\thanks {* Corresponding author}
\subjclass[2010]{Primary 11K55, 28A80; Secondary 37B10}
\keywords{ Approximation order, $\beta$-expansions, Hausdorff dimension}

\begin{abstract}
We prove that almost all real numbers (with respect to Lebesgue measure) are approximated by the convergents of their $\beta$-expansions with the exponential order $\beta^{-n}$. Moreover, the Hausdorff dimensions of sets of the real numbers which are approximated by all other orders, are determined. These results are also applied to investigate the orbits of real numbers under $\beta$-transformation, the shrinking target type problem, the Diophantine approximation and the run-length function of $\beta$-expansions.

\end{abstract}

\maketitle

\section{Introduction}

Let $\beta > 1$ be a real number and $T_\beta: [0,1] \longrightarrow [0,1]$ be the \emph{$\beta$-transformation} defined as
\begin{equation*}\label{T-beta}
T_\beta(x)= \beta x - \lfloor\beta x\rfloor,
\end{equation*}
where $\lfloor x\rfloor$ denotes the greatest integer not exceeding $x$. Then every $x \in [0,1]$ has an \emph{$\beta$-expansion}, namely
\begin{equation}\label{beta expansion}
x = \frac{\varepsilon_1(x, \beta)}{\beta} + \frac{\varepsilon_2(x, \beta)}{\beta^2} + \cdots + \frac{\varepsilon_n(x, \beta)+T^n_\beta}{\beta^n} = \sum_{n=1}^\infty \frac{\varepsilon_n(x, \beta)}{\beta^n},
\end{equation}
where $\varepsilon_1(x, \beta) = \lfloor\beta x\rfloor$ and $\varepsilon_{n+1}(x, \beta)= \varepsilon_1(T_\beta^nx, \beta)$ are called the \emph{digits} of the $\beta$-expansion of $x$~($n \in \mathbb{N}$). Sometimes we write the sequence $ \varepsilon(x,\beta)=(\varepsilon_1(x, \beta),\varepsilon_2(x, \beta),\cdots, \varepsilon_n(x, \beta),\cdots)$ as the $\beta$-expansion of $x$.
If there exists some $n_0 \in \mathbb{N}$ such that $\varepsilon_n(x, \beta) = 0$ for all $n \geq n_0$, we say that the $\beta$-expansion of $x$ is \emph{finite}. Otherwise, it is said to be \emph{infinite}. Such an expansion was first introduced by  R\'{e}nyi \cite{lesRen57} who proved that there exists a unique $T_\beta$-invariant measure $\mu_\beta$ equivalent to the Lebesgue measure when $\beta$ is not an integer;
 while it had been well known that the Lebesgue measure is $T_\beta$-invariant when $\beta$ is an integer.
Fuethermore, Gel'fond \cite{lesGel59} and Parry \cite{lesPar60} independently found the density formula for this invariant measure with respect to (w.r.t.) the Lebesgue measure.
Philipp \cite{lesPhi67} showed that the dynamical system $([0,1], \mathcal{B}, T_{\beta}, \mu_\beta)$ is an exponentially mixing measure-preserving system, where $\mathcal{B}$ is the Borel $\sigma$-algebra on $[0,1]$. Later, Hofbauer \cite{lesHof78} proved that $\mu_\beta$ is the unique measure of maximal entropy for $T_{\beta}$. Aaronson and Nakada \cite{lesAN05} obtained the $\beta$-transformation $T_\beta$ is exponential $\varphi$-mixing. And they also showed that $T_\beta$ is exponential $\psi$-mixing if and only if $\inf_{n \geq 1} T_\beta^n 1 >0$ (see Bradley \cite{lesBra05} for the definitions of $\varphi$-mixing and $\psi$-mixing). Some arithmetic, metric and fractal properties of $\beta$-expansions were studied extensively in the literature, such as \cite{lesAB07, lesBla89, lesF.W12, lesF.S92, lesKL15, lesL.W08, lesLW16, lesPS05, lesSch97, lesSch80, lesTho12} and the references therein.

For any real number $x \in [0,1]$, we denote the partial sums of the form (\ref{beta expansion}) by
\[
\omega_n(x, \beta)= \frac{\varepsilon_1(x, \beta)}{\beta} + \frac{\varepsilon_2(x, \beta)}{\beta^2} + \cdots + \frac{\varepsilon_n(x, \beta)}{\beta^n}
\]
and call them the \emph{convergents} of the $\beta$-expansion of $x$ ($n \in \mathbb{N}$). In the following, we write $\varepsilon_n(x)$ and $\omega_n(x)$ instead of $\varepsilon_n(x, \beta)$ and $\omega_n(x, \beta)$ respectively if there is no confusion. Since $T_\beta^nx \in [0,1)$, by the representation (\ref{beta expansion}), we have that the sequence $\{\omega_n(x): n \geq 1\}$ converges to $x$ as $n$ tends to infinity for any $x \in [0,1)$. A nature question is how fast $\omega_n(x)$ converges to $x$. The following theorem gives it a quantitative answer. Let $\lambda$ denote the Lebesgue measure on $[0,1]$.

\begin{theorem}\label{convergents}
Let $\beta >1$ be a real number. Then for $\lambda$-almost all $x \in [0,1)$,
\[
\lim_{n \to \infty} \frac{1}{n}\log_\beta (x -\omega_n(x)) = -1.
\]
\end{theorem}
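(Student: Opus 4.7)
\medskip

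\noindent\textbf{Proof plan.} The plan is to rewrite the quantity $x-\omega_n(x)$ in dynamical terms and then reduce the statement to a standard application of the Borel--Cantelli lemma using the Gel'fond--Parry invariant measure.

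\medskip

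First I would observe that the identity embedded in (\ref{beta expansion}) gives
\[
x-\omega_n(x) \;=\; \frac{T_\beta^n x}{\beta^n},
\]
so that
\[
\frac{1}{n}\log_\beta\bigl(x-\omega_n(x)\bigr) \;=\; \frac{1}{n}\log_\beta(T_\beta^n x) \;-\; 1.
\]
Hence the theorem is equivalent to proving that $\frac{1}{n}\log_\beta(T_\beta^n x)\to 0$ for $\lambda$-a.e.\ $x\in[0,1)$. Since $T_\beta^n x\in[0,1)$, the upper bound $\limsup_n\frac{1}{n}\log_\beta(T_\beta^n x)\le 0$ is immediate, so only the matching lower bound requires work.

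\medskip

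For the lower bound I would fix $\varepsilon>0$ and estimate the measure of the level sets
\[
A_n(\varepsilon) \;=\; \bigl\{\,x\in[0,1): T_\beta^n x < \beta^{-n\varepsilon}\,\bigr\}.
\]
By the $T_\beta$-invariance of the R\'enyi measure $\mu_\beta$ we have $\mu_\beta(A_n(\varepsilon))=\mu_\beta([0,\beta^{-n\varepsilon}))$. The Gel'fond--Parry density formula recalled in the introduction gives a density $h_\beta=\mathrm{d}\mu_\beta/\mathrm{d}\lambda$ that is uniformly bounded above and bounded away from zero on $[0,1]$; consequently there is a constant $C>0$ with
\[
\lambda(A_n(\varepsilon)) \;\le\; C\,\mu_\beta\bigl([0,\beta^{-n\varepsilon})\bigr) \;\le\; C'\,\beta^{-n\varepsilon}.
\]
Summing in $n$ yields $\sum_n\lambda(A_n(\varepsilon))<\infty$, so the Borel--Cantelli lemma implies that for $\lambda$-a.e.\ $x$ one has $T_\beta^n x\ge\beta^{-n\varepsilon}$ for all sufficiently large $n$, i.e.\ $\liminf_n\frac{1}{n}\log_\beta(T_\beta^n x)\ge -\varepsilon$.

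\medskip

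Finally I would run $\varepsilon$ through a countable sequence tending to $0$ and take the intersection of the corresponding full-measure sets to conclude $\liminf_n\frac{1}{n}\log_\beta(T_\beta^n x)\ge 0$ almost surely, which combined with the trivial upper bound gives the limit $0$ and hence the desired limit $-1$ for $\frac{1}{n}\log_\beta(x-\omega_n(x))$. The only subtle point I anticipate is the use of uniform equivalence between $\mu_\beta$ and $\lambda$; if $\beta$ is not an integer this needs the upper and lower bounds on the Parry density (both standard), and if $\beta$ is an integer one may work directly with $\lambda$ since it is already $T_\beta$-invariant. No other step should present any real obstacle.
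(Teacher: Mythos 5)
Your proof is correct, but it takes a genuinely different route from the paper. The paper never invokes the invariant measure: it works from the two-sided inequality $\beta^{-(n+\ell_n(x)+1)} \leq x-\omega_n(x) \leq \beta^{-(n+\ell_n(x))}$, where $\ell_n(x)$ is the run of zeros just after the $n$-th digit, and then deduces $\ell_n(x)/n \to 0$ almost everywhere from a much stronger fact, namely $\limsup_n \ell_n(x)/\log_\beta n = 1$ a.e., which it obtains from the Tong--Yu--Zhao theorem on the run-length function $r_n(x)$. You instead use the exact identity $x-\omega_n(x)=T_\beta^n x/\beta^n$, reduce to showing $\frac1n\log_\beta(T_\beta^n x)\to 0$ a.e., and run a direct Borel--Cantelli argument on the sets $\{T_\beta^n x<\beta^{-n\varepsilon}\}$ using $T_\beta$-invariance of $\mu_\beta$ and the two-sided bounds on the Gel'fond--Parry density. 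Your argument is more self-contained and elementary (it does not rely on the cited run-length result), at the cost of giving only the crude rate $T_\beta^n x\geq\beta^{-n\varepsilon}$ rather than the sharp $\log_\beta n$ scale for $\ell_n(x)$; the paper needs that sharper information anyway for Theorem 6.1 and the dimension results, which is presumably why it organizes the proof around $\ell_n$. Two minor points you should make explicit: the identity $x-\omega_n(x)=T_\beta^n x/\beta^n$ forces you to discard the countably many $x$ with finite expansion (where $T_\beta^n x=0$ and the logarithm is undefined), and the uniform lower bound on the Parry density (needed for $\lambda\leq C\mu_\beta$) holds because the $n=0$ term of the Gel'fond--Parry sum is always present; both are routine.
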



Roughly speaking, Theorem \ref{convergents} means that $x -\omega_n(x) \approx \beta^{-n}$ for $\lambda$-almost all $x \in [0,1)$. That is to say, $x$ is approximated by its convergents $\omega_n(x)$
with exponential order $\beta^{-n}$ for $\lambda$-almost all $x \in [0,1)$.
A further question is whether there exist some points with other approximation order than $\beta^{-n}$. If yes, how large is the set of such points. More precisely, we would like to know how many real numbers can be approximated with other orders $\beta^{-\phi(n)}$, where $\phi$ is a positive function defined on $\mathbb{N}$. In other words, we are interested in the following set
\begin{equation}\label{exceptional set}
\left\{x \in [0,1): \lim_{n \to \infty} \frac{1}{\phi(n)}\log_\beta (x -\omega_n(x)) = -1 \right\}.
\end{equation}
The problem on the approximation orders is a longstanding topic in mathematics, for example, the approximation of functions or the numbers. The approximation problems on the representations of real numbers have been widely investigated, see \cite{lesF.L.W.W, lesKS07, lesPW99} for continued fractions, see \cite{lesF.W03, lesF.W04} for Oppenheim expansions, see \cite{lesBI09, lesDK96} for L\"{u}roth expansions.

Replacing the limit of the quantity $ \frac{1}{\phi(n)}\log_\beta (x -\omega_n(x))$ in (\ref{exceptional set}) with limsup, we obtain

\begin{proposition}\label{sup}
Let $\beta >1$ be a real number and $\phi$ be a positive function defined on $\mathbb{N}$. Define
\[
A_\phi:= \left\{x \in [0,1): \limsup_{n \to \infty} \frac{1}{\phi(n)}\log_\beta (x -\omega_n(x)) = -1 \right\}.
\]
Then\\
(i) If $\liminf\limits_{n \to \infty} \phi(n)/n > 1$, then $A_\phi$ is countable at most.\\
(ii) If $\limsup\limits_{n \to \infty} \phi(n)/n < 1$, then $A_\phi$ is empty.
\end{proposition}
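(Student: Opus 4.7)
My plan rests on the exact identity $x - \omega_n(x) = T_\beta^n(x)/\beta^n$, which follows immediately from (\ref{beta expansion}) together with the fact that $T_\beta^n(x) \in [0,1)$. This gives the universal inequality $x - \omega_n(x) < \beta^{-n}$, and hence $\frac{1}{\phi(n)}\log_\beta(x - \omega_n(x)) < -n/\phi(n)$ at every $n$ for which $x - \omega_n(x) > 0$. Both halves of the proposition will essentially be corollaries of this single observation.

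For part (ii), assuming $\limsup_n \phi(n)/n < 1$, I would pick $c \in (\limsup_n \phi(n)/n,\,1)$ so that $\phi(n) < cn$ for all large $n$. The universal bound then yields $\frac{1}{\phi(n)}\log_\beta(x - \omega_n(x)) < -1/c$ eventually, so the limsup is at most $-1/c < -1$ for every $x$, and no $x$ can lie in $A_\phi$. (If $x$ has a finite $\beta$-expansion the same conclusion holds trivially, since the relevant sequence is eventually $-\infty$.)

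For part (i), I would exploit only the $\limsup \leq -1$ half of the defining condition. Pick $c > 1$ with $\phi(n) \geq cn$ eventually, and a small $\epsilon > 0$ with $(1-\epsilon)c > 1$. For any $x \in A_\phi$, the condition gives $x - \omega_n(x) \leq \beta^{-(1-\epsilon)\phi(n)}$ for all large $n$, which via the identity translates into $T_\beta^n(x) \leq \beta^{-\delta n}$ with $\delta := (1-\epsilon)c - 1 > 0$. The crux is the digit-extraction formula $\varepsilon_{n+1}(x) = \lfloor \beta\, T_\beta^n(x)\rfloor$: once $n$ is large enough that $\beta^{-\delta n} < 1/\beta$, this forces $\varepsilon_{n+1}(x) = 0$. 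Hence every sufficiently late digit of the $\beta$-expansion of $x$ vanishes, so $x$ has a finite $\beta$-expansion, and the set of such $x \in [0,1)$ is countable (it is indexed by finite tuples of admissible digits).

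The only step requiring real care is choosing $\epsilon$ in part (i) small enough that the strict inequality $(1-\epsilon)c > 1$ survives; the digit-level implication that $T_\beta^n(x) < 1/\beta$ forces $\varepsilon_{n+1}(x) = 0$ is immediate from the definition of the digits, so there is no genuine obstacle beyond this bookkeeping. In fact the argument shows slightly more than stated --- namely that $A_\phi$ is empty --- but the countability claim suffices for the proposition.
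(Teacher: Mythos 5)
Your proof is correct and rests on the same mechanism as the paper's: the identity $x-\omega_n(x)=T_\beta^n x/\beta^n$ is the two-sided bound $\beta^{-(n+\ell_n(x)+1)}\le x-\omega_n(x)\le \beta^{-(n+\ell_n(x))}$ in disguise (since $\beta^{-\ell_n(x)-1}\le T_\beta^n x\le \beta^{-\ell_n(x)}$), and your digit-extraction step in (i) is just the contrapositive of the paper's observation that an infinite expansion has $\ell_n(x)=0$ along a subsequence, which caps the decay of $x-\omega_n(x)$ at $\beta^{-(n+1)}$ infinitely often. Your closing remark is also valid: because finite-expansion points have $\limsup=-\infty$, the argument actually yields $A_\phi=\emptyset$ in case (i), slightly more than the stated countability.
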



We use the notation $\dim_{\rm H}$ to denote the Hausdorff dimension (see Falconer \cite{lesFal90}) and let $1/\infty := 0$ with the convention.
Replacing the limit with liminf in the set (\ref{exceptional set}), we have the following.

\begin{theorem}\label{inf dimension}
Let $\beta >1$ be a real number and $\phi$ be a positive function defined on $\mathbb{N}$.
Define  $\eta:= \liminf\limits_{n \to \infty} \phi(n)/n$ and
\begin{equation*}
B_\phi: = \left\{x \in [0,1): \liminf_{n \to \infty} \frac{1}{\phi(n)}\log_\beta (x -\omega_n(x)) = -1 \right\}.
\end{equation*}
Then\\
(i) If $0 \leq \eta < 1$, then $B_\phi$ is empty.\\
(ii)If additionally assume $\phi$ is nondecreasing and $\eta \geq 1$, then $\dim_{\rm H} B_\phi =1/\eta$.
\end{theorem}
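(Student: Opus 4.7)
For (i), since $T_\beta^n x\in[0,1)$ always, $\log_\beta(x-\omega_n(x))=\log_\beta T_\beta^n x-n\le -n$, giving
\[
\frac{1}{\phi(n)}\log_\beta(x-\omega_n(x))\le -\frac{n}{\phi(n)}.
\]
Since $\limsup_n(n/\phi(n))=1/\eta$, the liminf of the left-hand side is at most $-1/\eta$. When $\eta<1$ this is strictly below $-1$, so the equation $\liminf=-1$ is unsolvable and $B_\phi=\emptyset$.

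For the upper bound in (ii), $x\in B_\phi$ implies that for every $\epsilon>0$ the inequality $T_\beta^n x\le\beta^{n-(1-\epsilon)\phi(n)}$ holds for infinitely many $n$, placing $B_\phi$ inside the shrinking-target set $E_\epsilon$. For each $n$, the set $\{x:T_\beta^n x\le\beta^{-((1-\epsilon)\phi(n)-n)}\}$ is covered by the $O(\beta^n)$ level-$n$ cylinders of the $\beta$-shift each intersected with the preimage of $[0,\beta^{-((1-\epsilon)\phi(n)-n)}]$ under $T_\beta^n$, producing pieces of length $\lesssim\beta^{-(1-\epsilon)\phi(n)}$. The resulting $s$-sum $\sum_n\beta^{n-s(1-\epsilon)\phi(n)}$ converges geometrically whenever $s>1/((1-\epsilon)\eta)$, so $\dim_{\rm H}E_\epsilon\le 1/((1-\epsilon)\eta)$; letting $\epsilon\downarrow 0$ gives $\dim_{\rm H}B_\phi\le 1/\eta$.

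For the lower bound I would build a Cantor-like subset $F\subseteq B_\phi$ of dimension $1/\eta$. Pick a very sparse sequence $n_k\to\infty$ with $n_{k+1}/n_k\to\infty$ along which $\phi(n_k)/n_k\to\eta$, and set $m_k=\lfloor\phi(n_k)\rfloor-n_k$. On $F$ prescribe $\varepsilon_{n_k+j}(x)=0$ for $1\le j\le m_k$ and $\varepsilon_{n_k+m_k+1}(x)\neq 0$, pinning $T_\beta^{n_k}x$ to $[\beta^{-m_k-1},\beta^{-m_k})$ and driving the ratio at $n_k$ to $-1$ (condition (b)). At all other positions allow admissible digits as freely as the $\beta$-shift permits, subject to a mild restriction forbidding long runs of $0$'s so that $T_\beta^n x$ stays bounded away from $0$ at free $n$; together with the monotonicity of $\phi$ this yields condition (a). Because $n_{k+1}\gg n_k$, earlier forced blocks contribute only $o(n_k)$ positions in $[1,n_k]$, so the natural uniform measure $\mu$ on $F$ gives each level-$n_k$ cylinder mass $\beta^{-n_k(1+o(1))}$. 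Inside the $k$th forced block $\mu$ stays constant while the cylinder shrinks to length $\beta^{-\phi(n_k)}$, producing local dimension $n_k/\phi(n_k)\to 1/\eta$; tracking $\log_\beta N_n/n$ through the full period shows this is the \emph{minimum} local dimension of $\mu$, so the mass distribution principle gives $\dim_{\rm H}F\ge 1/\eta$, and together with the upper bound one concludes $\dim_{\rm H}B_\phi=1/\eta$.

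The main obstacle is the lower bound, specifically calibrating the sparsity of $n_k$ so that the minimum local dimension of $\mu$ is exactly $1/\eta$: a geometric choice $n_{k+1}\asymp n_k$ drops the local dimension to $1/\eta^2$ at the end-of-block scale. A secondary difficulty is handling the non-integer $\beta$ admissibility constraint and verifying condition (a) at free positions approaching the next forced block, which requires the ``no long zero run'' restriction that does not diminish the dimension and is controlled through the full-cylinder machinery of the $\beta$-shift.
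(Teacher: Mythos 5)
Your part (i) and your upper bound in (ii) are correct and essentially identical to the paper's: the paper also deduces (i) from $x-\omega_n(x)\le\beta^{-n}$ (via $\limsup (n+\ell_n(x))/\phi(n)\ge \limsup n/\phi(n)=1/\eta>1$), and its Lemma 4.1 is exactly your natural covering by level-$n$ cylinders shrunk to length $\lesssim\beta^{-(1-\epsilon)\phi(n)}$. Your lower-bound architecture (sparse forced zero-blocks of length $\lfloor\phi(n_k)\rfloor-n_k$, a mass distribution argument, and the correct diagnosis that the sparsity of $n_k$ is what keeps the minimal local exponent at $1/\eta$ rather than $1/\eta^2$) is also the paper's strategy.

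However, the lower bound as written has a genuine gap for general $\beta>1$. Your computation of the local dimension uses ``length of a level-$n$ cylinder $\approx\beta^{-n}$'' and a ``natural uniform measure,'' but for $\beta\notin A_0$ cylinders can be arbitrarily much shorter than $\beta^{-n}$, so a uniform-over-digits measure violates $\mu(I_n)\le C|I_n|^s$ on those cylinders, while a length-proportional measure needs the delicate bookkeeping the paper carries out. Your appeal to the full-cylinder machinery does not repair this directly: completing a word to a full cylinder requires appending $M_n(\beta)+1$ zeros, and $M_n(\beta)$ is unbounded (it can even be comparable to $n$) for $\beta\notin A_0$, so the inserted zero-runs would themselves destroy the condition $\limsup_n(n+\ell_n(x))/\phi(n)\le 1$. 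The paper resolves this by proving the lower bound only for $\beta'\in A_0$ (where $|I_n|\asymp\beta'^{-n}$ by Proposition 2.9 and $M_n(\beta')\le M<\infty$), then transferring to arbitrary $\beta$ via the H\"older-continuous conjugacy $h:H_\beta^{\beta'}\to[0,1)$ of Proposition 2.10, which gives $\dim_{\rm H}F_\phi\ge\frac{\log\beta'}{\log\beta}\cdot\frac{1}{1+\liminf\phi(n)/n}$, and finally lets $\beta'\nearrow\beta$ using the density of $A_0$. Some version of this two-step argument (or another device controlling cylinder lengths for all $\beta$) is needed to close your proof. A second, smaller gap: when $\eta=1$ your block length $m_k=\lfloor\phi(n_k)\rfloor-n_k$ may be negative, so the construction degenerates; the paper treats $\eta=1$ separately by observing that the target set then contains $\{x:\ell_n(x)=o(n)\}$, which already has full Hausdorff dimension.
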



Taking $\phi(n) = \alpha n$, Theorem \ref{inf dimension} gives the Hausdorff dimensions of the following level sets, which shows that these level sets have a rich multifractal structure.

\begin{corollary}\label{multifractal structure}
Let $\beta >1$ be a real number. Then the set
\[
\left\{x \in [0,1): \liminf_{n \to \infty} \frac{1}{n}\log_\beta (x -\omega_n(x)) = -\alpha \right\}
\]
has Hausdorff dimension $\alpha^{-1}$ for any $\alpha \geq 1$; otherwise it is empty.
\end{corollary}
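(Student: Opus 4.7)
The plan is to deduce this statement directly from Theorem \ref{inf dimension} by taking $\phi(n) = \alpha n$. First I would verify the hypotheses of that theorem for this choice: the function $\phi$ is positive on $\mathbb{N}$ and (for $\alpha > 0$) nondecreasing, and both the $\liminf$ and $\limsup$ of $\phi(n)/n$ equal $\alpha$, so $\eta = \alpha$ in the notation of the theorem.

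Next I would identify the level set in the corollary with $B_\phi$. Since $\alpha > 0$, factoring $1/\alpha$ out of the $\liminf$ yields the equivalence
\begin{equation*}
\liminf_{n\to\infty} \tfrac{1}{n}\log_\beta\bigl(x - \omega_n(x)\bigr) = -\alpha
\;\Longleftrightarrow\;
\liminf_{n\to\infty} \tfrac{1}{\alpha n}\log_\beta\bigl(x - \omega_n(x)\bigr) = -1,
\end{equation*}
so the set under consideration equals $B_\phi$ for this $\phi$.

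Finally I would invoke Theorem \ref{inf dimension}: part (i) with $0 \le \alpha < 1$ shows that $B_\phi$ is empty; part (ii), applicable because $\phi$ is nondecreasing and $\eta = \alpha \ge 1$, gives $\dim_{\rm H} B_\phi = 1/\eta = 1/\alpha$, matching the claimed value. There is no substantive obstacle here, since the argument is a direct specialization; the only point to watch is the algebraic rewriting of the level set, which relies on $\alpha$ being strictly positive so that multiplying the normalization $1/\phi(n)$ by $\alpha$ preserves the equality with $-1$.
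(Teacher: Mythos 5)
Your proposal is correct and is exactly the paper's (implicit) argument: the corollary is stated as the specialization of Theorem \ref{inf dimension} to $\phi(n)=\alpha n$, with the rewriting of the level set as $B_\phi$ that you describe. The only point neither you nor the paper spells out is the case $\alpha\le 0$, where emptiness follows trivially from the bound $x-\omega_n(x)\le\beta^{-n}$, which forces $\liminf_{n\to\infty}\frac{1}{n}\log_\beta(x-\omega_n(x))\le -1$ for every $x$.
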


Application of Corollary \ref{multifractal structure} implies the set of points such that Theorem \ref{convergents} does not hold, has full Hausdorff dimension.

\begin{corollary}\label{exceptional}
Let $\beta >1$ be a real number. Then
\[
\dim_{\rm H} \left\{x \in [0,1): \lim_{n \to \infty} \frac{1}{n}\log_\beta (x -\omega_n(x)) \neq -1 \right\} = 1.
\]
\end{corollary}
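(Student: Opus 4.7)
The plan is to apply Corollary \ref{multifractal structure} directly, by exhibiting a one-parameter family of subsets of the exceptional set whose Hausdorff dimensions approach $1$. For each $\alpha > 1$, write
\[
E_\alpha := \left\{x \in [0,1): \liminf_{n \to \infty} \frac{1}{n}\log_\beta (x - \omega_n(x)) = -\alpha \right\},
\]
so that Corollary \ref{multifractal structure} gives $\dim_{\rm H} E_\alpha = 1/\alpha$.

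The key observation is the inclusion
\[
E_\alpha \subseteq \left\{x \in [0,1): \lim_{n \to \infty} \tfrac{1}{n}\log_\beta (x - \omega_n(x)) \neq -1 \right\}
\]
for every $\alpha > 1$. Indeed, pick any $x \in E_\alpha$: its liminf equals $-\alpha \neq -1$, so if the full limit exists, it must equal $-\alpha$ and hence differ from $-1$; if the limit does not exist, then a fortiori the limit is not equal to $-1$. Either way, $x$ belongs to the exceptional set.

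Taking Hausdorff dimension on both sides and using monotonicity, the exceptional set has dimension at least $1/\alpha$ for every $\alpha > 1$. Letting $\alpha \downarrow 1$ yields the lower bound $1$, while the upper bound $1$ is trivial since the set sits inside $[0,1)$. There is no real obstacle here — the only point deserving a line of comment is the (standard) interpretation of the phrase ``$\lim \neq -1$'' as including the case where the limit fails to exist, which is what makes the inclusion above valid in both cases.
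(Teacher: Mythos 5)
Your proposal is correct and follows exactly the route the paper intends: the paper derives this corollary directly from Corollary \ref{multifractal structure} by noting that the level sets with $\alpha>1$ lie in the exceptional set and have dimension $1/\alpha \to 1$. No discrepancies.
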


\section{Preliminaries}

\subsection{Basic definitions and properties for $\beta$-expansions}
\begin{definition}
An $n$-block $(\varepsilon_1 ,\varepsilon_2, \cdots, \varepsilon_n)$ is said to be admissible for $\beta$-expansions if there exists $x \in [0,1)$ such that $\varepsilon_i(x) = \varepsilon_i$ for all $1 \leq i \leq n$. An infinite sequence $(\varepsilon_1 ,\varepsilon_2, \cdots, \varepsilon_n, \cdots)$ is admissible if $(\varepsilon_1 ,\varepsilon_2, \cdots, \varepsilon_n)$ is admissible for all $n \geq 1$.
\end{definition}

We denote by $\Sigma_\beta^n$ the collection of all admissible sequences of length $n$ $(n \in \mathbb{N})$ and by $\Sigma_\beta$ that of all infinite admissible sequences. The following result of R\'{e}nyi \cite{lesRen57} implies that the dynamical system ([0,1), $T_\beta$) admits $\log \beta$ as its topological entropy.

\begin{proposition}[\cite{lesRen57}]\label{Renyi}
Let $\beta >1$ be a real number. For any $n \geq 1$,
\begin{equation*}
\beta^n \leq \sharp \Sigma_\beta^n \leq \beta^{n+1}/(\beta -1),
\end{equation*}
where $\sharp$ denotes the number of elements of a finite set.
\end{proposition}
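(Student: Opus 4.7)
My plan is to handle the two inequalities separately via the geometry of the cylinder sets
\[
I(\varepsilon_1,\ldots,\varepsilon_n) := \{x \in [0,1) : \varepsilon_i(x,\beta) = \varepsilon_i,\ 1 \le i \le n\}.
\]
The key preliminary observation, obtained by iterating the fact that $T_\beta$ is affine of slope $\beta$ on each branch $[k/\beta,(k+1)/\beta)\cap[0,1)$, is that $T_\beta^n$ acts as an affine map of slope $\beta^n$ on every such cylinder. In particular $I(\varepsilon_1,\ldots,\varepsilon_n)$ is a genuine subinterval of $[0,1)$ and $T_\beta^n(I(\varepsilon_1,\ldots,\varepsilon_n)) = [a_\varepsilon,b_\varepsilon) \subseteq [0,1)$ with $b_\varepsilon - a_\varepsilon = \beta^n \lambda(I(\varepsilon_1,\ldots,\varepsilon_n))$.

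For the lower bound, I will invoke the expansion formula (\ref{beta expansion}) to write every $x \in I(\varepsilon_1,\ldots,\varepsilon_n)$ as $x = \sum_{i=1}^n \varepsilon_i\beta^{-i} + \beta^{-n} T_\beta^n x$ with $T_\beta^n x \in [0,1)$, so the cylinder has Lebesgue measure at most $\beta^{-n}$. Since the cylinders partition $[0,1)$, summing yields $1 \le \sharp\Sigma_\beta^n \cdot \beta^{-n}$, i.e.\ $\sharp\Sigma_\beta^n \ge \beta^n$.

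For the upper bound, I will prove by induction the recursion $\sharp\Sigma_\beta^{n+1} \le \beta^{n+1} + \sharp\Sigma_\beta^n$. Given $(\varepsilon_1,\ldots,\varepsilon_n) \in \Sigma_\beta^n$, the digits $\varepsilon_{n+1}$ extending it admissibly are exactly the integer values $\lfloor\beta x\rfloor$ realized for some $x \in [a_\varepsilon,b_\varepsilon)$, so their number is at most $\lfloor\beta b_\varepsilon\rfloor - \lfloor\beta a_\varepsilon\rfloor + 1 \le \beta(b_\varepsilon - a_\varepsilon) + 1$. Summing over $\varepsilon \in \Sigma_\beta^n$ and using $\sum_\varepsilon (b_\varepsilon - a_\varepsilon) = \beta^n \sum_\varepsilon \lambda(I(\varepsilon_1,\ldots,\varepsilon_n)) = \beta^n$ gives the recursion, and iterating from the trivial base $\sharp\Sigma_\beta^0 = 1$ produces $\sharp\Sigma_\beta^n \le 1 + \beta + \cdots + \beta^n = (\beta^{n+1}-1)/(\beta-1) \le \beta^{n+1}/(\beta-1)$. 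The only nontrivial step is the digit-extension count $\lfloor\beta b_\varepsilon\rfloor - \lfloor\beta a_\varepsilon\rfloor + 1 \le \beta(b_\varepsilon - a_\varepsilon) + 1$, which is a one-line estimate on the number of integers in a half-open interval; combined with the affine-branch structure of $T_\beta^n$ (which is what makes $[a_\varepsilon,b_\varepsilon)$ an interval in the first place) this is really the whole content, and the lower bound is a pure pigeonhole argument.
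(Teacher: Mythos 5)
The paper does not prove this proposition at all --- it is imported verbatim from R\'enyi \cite{lesRen57} --- so there is no internal proof to compare against; your proposal supplies a self-contained argument, and both halves of it are sound. The lower bound (cylinders of order $n$ partition $[0,1)$, each has measure at most $\beta^{-n}$, hence there are at least $\beta^n$ of them) is the standard pigeonhole argument. The recursion $\sharp\Sigma_\beta^{n+1}\leq\beta^{n+1}+\sharp\Sigma_\beta^n$ for the upper bound is also correct and gives exactly the geometric-series constant $\beta^{n+1}/(\beta-1)$.

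One step is misstated, though, and you should repair it: the inequality $\lfloor\beta b_\varepsilon\rfloor-\lfloor\beta a_\varepsilon\rfloor+1\leq\beta(b_\varepsilon-a_\varepsilon)+1$ is \emph{not} a valid general estimate for the number of distinct integer parts over a half-open interval --- for $[\beta a_\varepsilon,\beta b_\varepsilon)=[0.9,\,1.1)$ the left side is $2$ while the right side is $1.2$. In general an interval of length $L$ can meet $\lceil L\rceil+1>L+1$ distinct unit intervals $[k,k+1)$, and if you only had count $\leq\beta(b_\varepsilon-a_\varepsilon)+2$ the recursion would degrade to $\sharp\Sigma_\beta^{n+1}\leq\beta^{n+1}+2\,\sharp\Sigma_\beta^n$, which does not yield the stated constant. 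Your argument is saved by a fact you set up but never invoke: since the left endpoint of $I(\varepsilon_1,\ldots,\varepsilon_n)$ is $\omega_n=\sum_{i\leq n}\varepsilon_i\beta^{-i}$ and $T_\beta^n x=\beta^n(x-\omega_n)$ on the cylinder, one always has $a_\varepsilon=0$, so the extension count is $\lceil\beta b_\varepsilon\rceil\leq\beta b_\varepsilon+1=\beta(b_\varepsilon-a_\varepsilon)+1$ and the recursion holds as claimed. State $a_\varepsilon=0$ explicitly and the proof is complete.
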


\begin{definition}
Let $(\varepsilon_1 ,\varepsilon_2, \cdots, \varepsilon_n)\in \Sigma_\beta^n$. We define
\[
I(\varepsilon_1 ,\varepsilon_2, \cdots, \varepsilon_n) = \{x \in [0,1):\varepsilon_i(x)=\varepsilon_i \ \text{for all} \ 1 \leq i \leq n\}
\]
and call it the $n$-th cylinder of $\beta$-expansion. Furthermore, if $T_\beta^n I(\varepsilon_1 ,\varepsilon_2, \cdots, \varepsilon_n) = [0,1)$, we say $I(\varepsilon_1 ,\varepsilon_2, \cdots, \varepsilon_n)$ is full.
\end{definition}

\begin{remark}
The $n$-th order cylinder $I(\varepsilon_1 ,\varepsilon_2, \cdots, \varepsilon_n)$ is a left-closed and right-open interval with left endpoint
\begin{equation*}
\frac{\varepsilon_1}{\beta} + \frac{\varepsilon_2}{\beta^2} + \cdots + \frac{\varepsilon_n}{\beta^n}.
\end{equation*}
Moreover, the length of $I(\varepsilon_1 ,\varepsilon_2, \cdots, \varepsilon_n)$ satisfies
$|I(\varepsilon_1 ,\varepsilon_2, \cdots, \varepsilon_n)| \leq 1/\beta^n$. We stress that there is no nontrivial lower bound for the length of a $n$-th cylinder, which can be much smaller that $\beta^{-n}$.
However, it is clear that $I(\varepsilon_1 ,\varepsilon_2, \cdots, \varepsilon_n)$ is full if and only if $|I(\varepsilon_1 ,\varepsilon_2, \cdots, \varepsilon_n)| = 1/\beta^n$. The properties of full cylinders were investigated by Fan and Wang \cite{lesF.W12}. In \cite{lesB.W14}, the authors gave a full characterization of full cylinders and studied the distribution of full cylinders in the unit interval.
\end{remark}

Denote
\begin{align*}
\mathcal{A} =
\begin{cases}
\left\{0, 1, \cdots, \beta -1\right\}, & \text {if $\beta$ is an integer}; \\
\left\{0, 1, \cdots, [\beta]\right\}, & \text {if $\beta$ is not an integer}.
\end{cases}
\end{align*}
Let $(\mathcal{A}^\mathbb{N}, \sigma)$ be the symbolic dynamics with the shift transformation $\sigma$ on $\mathcal{A}^\mathbb{N}$. The finite word $\varepsilon^n$ and the infinite sequence $\varepsilon^\infty$ means $\underbrace{\varepsilon \varepsilon  \cdots\varepsilon}_{n}$ and $\varepsilon \varepsilon  \cdots\varepsilon \cdots$ respectively for a finite word $\varepsilon \in \mathcal{A}^\mathbb{N}$ $(n \in \mathbb{N})$. When $\beta$ is an integer, $\Sigma_\beta$ is simply $\mathcal{A}^\mathbb{N}$ (or more precisely $\mathcal{A}^\mathbb{N} = \mathcal{S}_\beta$ defined below); when $\beta$ is not an integer, $\Sigma_\beta$ was characterized by Parry \cite{lesPar60} using the infinite $\beta$-expansion of the number 1, denoted by $\varepsilon^*(1, \beta)$, which can be obtained in the following: if the $\beta$-expansion of the number 1 is finite, i.e., $\varepsilon(1, \beta) =(\varepsilon_1(1),\varepsilon_2(1),\cdots, \varepsilon_n(1),0^\infty)$ with $\varepsilon_n(1) \neq 0$ for some $n \geq 1$, we define by $\varepsilon^*(1, \beta)$ the  infinite $\beta$-expansion of the number 1 as $(\varepsilon_1(1),\varepsilon_2(1),\cdots, \varepsilon_{n-1}(1),\varepsilon_n(1)-1)^\infty$; if the
$\beta$-expansion of the number 1 is infinite, we keep it and still write $\varepsilon^*(1, \beta)$ instead of $\varepsilon(1, \beta)$ in this case.
To state the following proposition, we give two notations $\prec$ and $\preceq$, the lexicographical orders on $\mathcal{A}^\mathbb{N}$. That is, let $\varepsilon = \varepsilon_1 \varepsilon_2 \cdots \varepsilon_n \cdots$ and $\varepsilon^{\prime}=\varepsilon_1^{\prime} \varepsilon_2^{\prime} \cdots \varepsilon_n^{\prime} \cdots$ both belong to $\mathcal{A}^\mathbb{N}$, then $\varepsilon \prec \varepsilon^{\prime}$ means that there exists $k \geq 1$ such that $\varepsilon_i = \varepsilon_i^{\prime}$ for all $1 \leq i < k$ and $\varepsilon_k < \varepsilon_k^{\prime}$, and $\varepsilon \preceq \varepsilon^{\prime}$ means that $\varepsilon \prec \varepsilon^{\prime}$ or $\varepsilon = \varepsilon^{\prime}$.

\begin{proposition}(\cite[Theorem 3]{lesPar60})\label{parry pr}
Let $\beta >1$ be a real number and $\varepsilon^*(1, \beta)$ be the infinite $\beta$-expansion of the number 1. \\
(i) $\omega \in \Sigma_\beta$ if and only if
\[
\sigma^n(\omega) \prec \varepsilon^*(1, \beta)\ \  \text{for all} \ \ n \geq 0.
\]
(ii) The function $\beta \mapsto \varepsilon^*(1, \beta)$ is increasing w.r.t. $\beta$. Therefore, if $1 < \beta_1 < \beta_2$, then
\[
\Sigma_{\beta_1} \subset \Sigma_{\beta_2},\ \ \  \Sigma^n_{\beta_1} \subset \Sigma^n_{\beta_2}\ \ \text{for all}\ \ n \geq 1.
\]
\end{proposition}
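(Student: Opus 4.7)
The plan is to reduce both parts to a lexicographic order-preservation principle, combined with the monotonicity of the functional $(a_n) \mapsto \sum_n a_n/\beta^n$ in the base $\beta$.

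For Part (i), I would first prove the order-preserving lemma: for $x,y \in [0,1)$, $x < y$ if and only if $\varepsilon(x,\beta) \prec \varepsilon(y,\beta)$. This follows from a standard first-difference analysis together with the identity $\sum_{k\geq 1}\varepsilon_k(z,\beta)/\beta^k = z < 1$ for $z \in [0,1)$. The forward direction of (i) is then immediate: if $\omega = \varepsilon(x,\beta)$, then $\sigma^n \omega = \varepsilon(T_\beta^n x,\beta)$ is the expansion of a point in $[0,1)$, and choosing $y_j \uparrow 1$ one verifies that $\varepsilon(y_j,\beta)$ converges coordinate-wise to $\varepsilon^*(1,\beta)$, which yields $\sigma^n \omega \prec \varepsilon^*(1,\beta)$. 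For the converse, set $x := \sum_{n\geq 1}\omega_n/\beta^n$ and prove by induction on $n$ that $T_\beta^n x = \sum_{k\geq 1}\omega_{n+k}/\beta^k$ and $\varepsilon_n(x,\beta) = \omega_n$. The decisive input is that $\sigma^n \omega \prec \varepsilon^*(1,\beta)$ translates into the tail bound $\sum_{k\geq 1}\omega_{n+k}/\beta^k < 1$, so every tail lies in $[0,1)$ and the greedy algorithm produces exactly $\omega_{n+1}$.

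For Part (ii), let $\beta_1 < \beta_2$ and write $\varepsilon^*(1,\beta_i) = (a^{(i)}_n)$. Assume for contradiction that $(a^{(1)}_n) \succeq (a^{(2)}_n)$. Equality is ruled out because $\sum_n a^{(1)}_n/\beta^n$ is strictly decreasing in $\beta$ (the sequence is nonzero), so it cannot equal $1$ for both $\beta = \beta_1$ and $\beta = \beta_2$. In the remaining case let $k$ be the first position of disagreement, so $a^{(1)}_k \geq a^{(2)}_k + 1$; using the tail bound $\sum_{n>k} a^{(2)}_n/\beta_2^n < 1/\beta_2^k$ (furnished by Part (i) applied to $\varepsilon^*(1,\beta_2)$ in base $\beta_2$), I estimate
\[
\sum_{n\geq 1}\frac{a^{(1)}_n}{\beta_2^n} - \sum_{n\geq 1}\frac{a^{(2)}_n}{\beta_2^n} \geq \frac{a^{(1)}_k - a^{(2)}_k}{\beta_2^k} - \sum_{n>k}\frac{a^{(2)}_n}{\beta_2^n} > 0,
\]
so $\sum_n a^{(1)}_n/\beta_2^n > 1$. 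On the other hand $\sum_n a^{(1)}_n/\beta_2^n < \sum_n a^{(1)}_n/\beta_1^n = 1$, a contradiction. The inclusions $\Sigma_{\beta_1} \subset \Sigma_{\beta_2}$ and $\Sigma^n_{\beta_1} \subset \Sigma^n_{\beta_2}$ follow directly from Part (i): for any $\omega \in \Sigma_{\beta_1}$ we have $\sigma^k \omega \prec \varepsilon^*(1,\beta_1) \prec \varepsilon^*(1,\beta_2)$ for all $k \geq 0$; and any admissible $n$-block for $\beta_1$ is the prefix of some $\omega \in \Sigma_{\beta_1} \subset \Sigma_{\beta_2}$, hence lies in $\Sigma^n_{\beta_2}$.

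The step I expect to be most delicate is pinning down the tail bound $\sum_{k\geq 1}\omega_{n+k}/\beta^k < 1$, particularly when $\varepsilon(1,\beta)$ is finite and $\varepsilon^*(1,\beta)$ is a periodic sequence whose shifts can coincide with $\varepsilon^*(1,\beta)$ itself. The strict lexicographic inequality $\prec$ in the hypothesis is what keeps things separated, so I would first prove the following quantitative comparison: if digit sequences $(c_n),(d_n)$ with entries in $\{0,\ldots,\lfloor\beta\rfloor\}$ first differ at position $k$ with $d_k > c_k$, and if $(d_n)$ satisfies $\sum_{n>k} d_n/\beta^n < 1/\beta^k$, then $\sum_n c_n/\beta^n < \sum_n d_n/\beta^n$. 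Applying this with $(c_n) = \sigma^n\omega$ and $(d_n) = \varepsilon^*(1,\beta)$ closes the loop, and the same lemma is exactly what powers the contradiction in Part (ii).
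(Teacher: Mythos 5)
The paper does not prove this proposition at all: it is quoted verbatim from Parry with the citation \cite[Theorem 3]{lesPar60}, so there is no in-paper argument to compare against and your proposal has to be judged on its own. Its architecture (order preservation of $\pi_\beta$, a first-difference comparison, the limit $\varepsilon(y,\beta)\to\varepsilon^*(1,\beta)$ as $y\uparrow 1$, and strict monotonicity of $\beta\mapsto\sum a_n\beta^{-n}$) is indeed the standard route. But the ``quantitative comparison'' lemma you designate as the engine of the whole proof is false as stated, because you put the tail hypothesis on the wrong sequence. Take $\beta=\varphi=(1+\sqrt5)/2$, $(c_n)=(0,1,1,1,\dots)$ and $(d_n)=(1,0,0,\dots)$: they first differ at $k=1$ with $d_1>c_1$, the tail of $(d_n)$ vanishes so $\sum_{n>1}d_n\varphi^{-n}=0<\varphi^{-1}$, and yet $\sum_n c_n\varphi^{-n}=1>\varphi^{-1}=\sum_n d_n\varphi^{-n}$. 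The correct lemma needs the tail bound on the lexicographically \emph{smaller} sequence $(c_n)$: if $\sum_{n>k}c_n\beta^{-n}\le\beta^{-k}$ then $\sum_n c_n\beta^{-n}\le\sum_{n<k}d_n\beta^{-n}+(c_k+1)\beta^{-k}\le\sum_n d_n\beta^{-n}$. This matters because in the converse direction of (i), with $(c_n)=\sigma^n\omega$, the tail bound you then need is exactly the inequality $\sum_j\omega_{n+k+j}\beta^{-j}<1$ that you are trying to prove, only at a later index — the argument is circular as written. The standard way to break the circle is to treat all shifts simultaneously: set $x_n=\sum_{j\ge1}\omega_{n+j}\beta^{-j}$ and $S=\sup_n x_n$, use the first disagreement of $\sigma^n\omega$ with $\varepsilon^*(1,\beta)$ to get $x_n\le 1-\beta^{-k_n}\bigl(1+t_{k_n}-x_{n+k_n}\bigr)$ where $t_k$ is the $k$-th tail of $\varepsilon^*(1,\beta)$, deduce $S\le1$ from the resulting self-bound, and then get strictness from $t_k>0$ (which holds because $\varepsilon^*(1,\beta)$ is an infinite expansion). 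Some such supremum or infinite-descent step is unavoidable and is missing from your plan.

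Part (ii), by contrast, is essentially sound: there you apply the comparison with the tail bound on the smaller sequence $(a^{(2)}_n)=\varepsilon^*(1,\beta_2)$, which is the correct direction. The one caveat is that the strict bound $\sum_{n>k}a^{(2)}_n\beta_2^{-n}<\beta_2^{-k}$ can fail — when $\varepsilon^*(1,\beta_2)$ is periodic a shift of it can equal the whole sequence, e.g.\ $\varepsilon^*(1,\varphi)=(10)^\infty$ has $t_2=1$ — and moreover part (i) does not apply to $\varepsilon^*(1,\beta_2)$ itself since its shifts satisfy only $\preceq$, not $\prec$. But the non-strict bound $\le\beta_2^{-k}$ still gives $\sum_n a^{(1)}_n\beta_2^{-n}\ge 1$, which already contradicts the strict inequality $\sum_n a^{(1)}_n\beta_2^{-n}<\sum_n a^{(1)}_n\beta_1^{-n}=1$, so the conclusion survives. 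The deduction of the inclusions $\Sigma_{\beta_1}\subset\Sigma_{\beta_2}$ and $\Sigma^n_{\beta_1}\subset\Sigma^n_{\beta_2}$ from (i) and the monotonicity of $\varepsilon^*(1,\cdot)$ is fine.
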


Let $\mathcal{S}_\beta$ be the closure of the set $\Sigma_\beta$. It is clear to see $\mathcal{S}_\beta= \mathcal{A}^\mathbb{N}$ when $\beta$ is an integer and otherwise, $(\mathcal{S}_\beta, \sigma|_{\mathcal{S}_\beta})$ is a subshift of $(\mathcal{A}^\mathbb{N}, \sigma)$, where $\sigma|_{\mathcal{S}_\beta}$ is the restriction of $\sigma$ to $\mathcal{S}_\beta$. Proposition \ref{parry pr} implies the following characterization of $\mathcal{S}_\beta$.

\begin{corollary}[\cite{lesPar60}]
Let $\beta >1$ be a real number and $\varepsilon^*(1, \beta)$ be the infinite $\beta$-expansion of the number 1. Then
\[
\mathcal{S}_\beta = \left\{\omega \in \mathcal{A}^\mathbb{N}: \sigma^n(\omega) \preceq \varepsilon^*(1, \beta)\ \ \text{for all} \ \ n \geq 0 \right\}.
\]
\end{corollary}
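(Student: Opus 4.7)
The plan is to establish the two inclusions between $\mathcal{S}_\beta$ and $V := \{\omega \in \mathcal{A}^{\mathbb{N}}:\sigma^n\omega \preceq \varepsilon^*(1,\beta)\text{ for all }n\geq 0\}$ separately, leveraging Proposition \ref{parry pr}(i) throughout.

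For $\mathcal{S}_\beta\subseteq V$, I would first verify that $V$ is closed in $\mathcal{A}^{\mathbb{N}}$ (product topology). The key point is that the relation $\tau\preceq\varepsilon^*(1,\beta)$ cuts out a closed set: its complement $\{\tau:\tau\succ\varepsilon^*(1,\beta)\}$ is open, because if they first disagree at coordinate $k$ with $\tau_k>\varepsilon^*_k(1,\beta)$ then the same inequality persists on the cylinder fixing the first $k$ coordinates of $\tau$. Pulling this back by the continuous map $\sigma^n$ and intersecting over $n$ keeps $V$ closed. Since $\prec$ implies $\preceq$, Proposition \ref{parry pr}(i) gives $\Sigma_\beta\subseteq V$, so $\mathcal{S}_\beta=\overline{\Sigma_\beta}\subseteq V$.

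For the reverse inclusion $V\subseteq\mathcal{S}_\beta$, given $\omega\in V$ I would approximate it by the truncations $\omega^{(N)}:=\omega_1\omega_2\cdots\omega_N\,0^{\infty}$. These converge to $\omega$ in the product topology, so it suffices to show each $\omega^{(N)}\in\Sigma_\beta$, which by Proposition \ref{parry pr}(i) reduces to checking $\sigma^n\omega^{(N)}\prec e$ for every $n\geq 0$, where I write $e:=\varepsilon^*(1,\beta)$ for brevity. For $n\geq N$ the shift gives $0^{\infty}\prec e$, since $e_1=\lfloor\beta\rfloor\geq 1$ when $\beta$ is not an integer and $e=(\beta-1)^\infty$ when $\beta$ is an integer. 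For $n<N$ the shifted sequence is $\omega_{n+1}\cdots\omega_N 0^{\infty}$. Using the hypothesis $\sigma^n\omega\preceq e$, either the strict inequality $\sigma^n\omega\prec e$ is already witnessed at some coordinate $j\leq N-n$, in which case the same witness gives $\sigma^n\omega^{(N)}\prec e$; or $\omega_{n+1}\cdots\omega_N=e_1\cdots e_{N-n}$, and one then locates the first coordinate $i>N-n$ at which $e_i>0$ (while $\sigma^n\omega^{(N)}$ carries a $0$ there), producing $\sigma^n\omega^{(N)}\prec e$ once again.

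The only delicate point, and the reason the statement uses $\preceq$ rather than $\prec$, is the second subcase above: it requires $e$ to have infinitely many nonzero digits, so that the trailing zeros of $\omega^{(N)}$ cannot accidentally force equality with $e$. This is precisely the purpose of Parry's definition of the \emph{infinite} $\beta$-expansion $\varepsilon^*(1,\beta)$ (replacing a terminating $\varepsilon(1,\beta)$ by subtracting $1$ from its last nonzero digit and repeating), and it is what makes the truncation-by-zeros construction succeed.
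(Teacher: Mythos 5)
Your argument is correct, and it is the natural fleshing-out of what the paper leaves implicit: the paper offers no proof beyond the remark that the corollary ``is implied by'' Proposition \ref{parry pr}, which is exactly the route you take (closedness of the condition $\sigma^n\omega\preceq\varepsilon^*(1,\beta)$ for one inclusion, approximation of $\omega$ by the truncations $\omega_1\cdots\omega_N0^\infty$ together with Proposition \ref{parry pr}(i) for the other). Your observation that the second inclusion hinges on $\varepsilon^*(1,\beta)$ having infinitely many nonzero digits --- which is guaranteed by Parry's replacement of a terminating expansion of $1$ by its infinite version --- correctly identifies the one genuinely delicate point.
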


In 1989, Blanchard \cite{lesBla89} outlined a classification for all numbers $\beta >1$ according to the topological properties of $\mathcal{S}_\beta$. Later, the Lebesgue measures and Hausdorff dimensions of all classes were calculated by Schmeling \cite{lesSch97}. Denote by $\ell_n(1,\beta)$ the length of the longest strings of zeroes just after the $n$-th digit in the $\beta$-expansion of the number 1, namely,
\[
\ell_n(1,\beta) = \sup\left\{k \geq 0: \varepsilon^*_{n +1}(1)=\cdots=\varepsilon^*_{n +k}(1) =0\right\}.
\]
Recently, Li and Wu \cite{lesL.W08} provided another classification of $\beta >1$ by the growth of $\ell_n(1,\beta)$ as follows:
\[
A_0 = \left\{\beta>1: \limsup_{n \to \infty}\ell_n(1,\beta)< + \infty,~\text{i.e.},~\left\{\ell_n(1,\beta)\right\}_{n \geq 1}\ \text{is bounded}\right\}
\]
and $A_1 = (1,+\infty)\backslash A_0$. Then $A_0$ is dense in $(1,+\infty)$ (see Parry \cite{lesPar60}).
It is worth pointing out that all $\beta$'s such that $\mathcal{S}_\beta$ is a subshift of finite type are contained in $A_0$ and $\beta \in A_0$ if and only if $\mathcal{S}_\beta$ satisfies the specification property. Buzzi \cite{lesBuz97} proved that the set of $\beta>1$ such that the $\beta$-transformation $T_\beta$ has the specification property is of zero Lebesgue measure. Furthermore, Schmeling \cite{lesSch97} proved that $A_0$ has full Hausdorff dimension and Li et al.~\cite{lesL.P.W.W14} showed that $A_1$ is of full Lebesgue measure.

The following lemma from \cite{lesL.W08} gives a way to get full cylinders.

\begin{lemma}(\cite{lesL.W08})\label{full}
Let $\beta >1$ be a real number and $(\varepsilon_1, \varepsilon_2, \cdots, \varepsilon_n) \in \Sigma^n_\beta$. Denote $M_n(\beta)= \max_{1 \leq k \leq n}\{\ell_k(1,\beta)\}$, then for any $m > M_n(\beta)$, the cylinder
\[
I(\varepsilon_1, \varepsilon_2, \cdots, \varepsilon_n, \underbrace{0,\cdots,0}_{m})
\]
is a full cylinder and its length equals $\beta^{-(n+m)}$.
\end{lemma}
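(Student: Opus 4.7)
The plan is to establish fullness via Parry's lexicographic criterion (Proposition~\ref{parry pr}(i)) and then deduce the length identity from the equivalence noted in the remark of Section~2 that $I(\eta_1,\ldots,\eta_N)$ is full if and only if $|I(\eta_1,\ldots,\eta_N)|=\beta^{-N}$. I would use the reformulation that $I(\eta_1,\ldots,\eta_N)$ is full if and only if, for every $(\omega_1,\omega_2,\ldots)\in\Sigma_\beta$, the concatenation $(\eta_1,\ldots,\eta_N,\omega_1,\omega_2,\ldots)$ is itself admissible.

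Setting $N=n+m$, fix any $(\omega_1,\omega_2,\ldots)\in\Sigma_\beta$ and form
\[
\xi=(\varepsilon_1,\ldots,\varepsilon_n,\underbrace{0,\ldots,0}_{m},\omega_1,\omega_2,\ldots).
\]
The task is to verify $\sigma^k(\xi)\prec\varepsilon^*(1,\beta)$ for every $k\geq 0$, which I would split into three cases. For $k\geq n+m$, $\sigma^k(\xi)$ is a tail of the admissible sequence $(\omega_1,\omega_2,\ldots)$, so the inequality is immediate. For $n\leq k<n+m$, $\sigma^k(\xi)$ begins with a $0$ whereas the first digit of $\varepsilon^*(1,\beta)$ is at least $1$, so the inequality is again trivial.

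The main case is $0\leq k<n$. Admissibility of $(\varepsilon_1,\ldots,\varepsilon_n)$ provides an admissible extension $(\varepsilon_1,\ldots,\varepsilon_n,\varepsilon_{n+1},\ldots)$, and Proposition~\ref{parry pr} gives $(\varepsilon_{k+1},\ldots,\varepsilon_n,\varepsilon_{n+1},\ldots)\prec\varepsilon^*(1,\beta)$. If this strict inequality is already witnessed at some coordinate $\leq n-k$, the same witness applies to $\sigma^k(\xi)$, since $\sigma^k(\xi)$ agrees with the above sequence on its first $n-k$ coordinates. Otherwise $(\varepsilon_{k+1},\ldots,\varepsilon_n)=(\varepsilon^*_1(1),\ldots,\varepsilon^*_{n-k}(1))$, and here the hypothesis $m>M_n(\beta)\geq\ell_{n-k}(1,\beta)$ is decisive: by the definition of $\ell_{n-k}(1,\beta)$ the digit $\varepsilon^*_{n-k+\ell_{n-k}(1,\beta)+1}(1)$ is positive, while at the same position $\sigma^k(\xi)$ still carries a $0$, since that position lies within the appended block of $m$ consecutive zeros. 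This supplies the required strict inequality, fullness follows, and the length $\beta^{-(n+m)}$ is immediate from the equivalence recalled above. The main obstacle is precisely this coordinate bookkeeping in the third case, where the strict bound $m>M_n(\beta)$ is used exactly to ensure the appended block of zeros outlasts every zero-run that can appear in the first $n$ tails of $\varepsilon^*(1,\beta)$.
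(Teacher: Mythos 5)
The paper does not prove this lemma at all: it is quoted verbatim from Li and Wu \cite{lesL.W08}, so there is no in-text argument to compare yours against. Judged on its own, your proof is correct and is essentially the standard argument behind the cited result: you reduce fullness to the statement that every $\omega\in\Sigma_\beta$ can be concatenated after $(\varepsilon_1,\ldots,\varepsilon_n,0^m)$ admissibly, and then verify Parry's criterion (Proposition \ref{parry pr}(i)) shift by shift. The case analysis is complete, and the key step is handled correctly: when $(\varepsilon_{k+1},\ldots,\varepsilon_n)$ coincides with the prefix $(\varepsilon^*_1(1),\ldots,\varepsilon^*_{n-k}(1))$, the bound $m>M_n(\beta)\geq \ell_{n-k}(1,\beta)$ guarantees that the appended zero block still covers position $n-k+\ell_{n-k}(1,\beta)+1$, where $\varepsilon^*(1,\beta)$ has a positive digit (note $\ell_{n-k}(1,\beta)<\infty$ because the infinite expansion of $1$ has infinitely many nonzero digits), which supplies the strict lexicographic inequality. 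The only ingredient you use without proof is the equivalence between fullness of $I(\eta_1,\ldots,\eta_N)$ and admissibility of all concatenations $(\eta_1,\ldots,\eta_N,\omega)$ with $\omega\in\Sigma_\beta$; this is standard (it follows from the fact that $T_\beta^N$ maps the cylinder affinely onto $[0,\beta^N|I|)$ and from Parry's criterion), but if you wanted a fully self-contained write-up you should record it as a one-line lemma. The final length statement then follows from the equivalence recalled in the paper's remark that a cylinder of order $N$ is full if and only if its length is $\beta^{-N}$.
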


The following result characterizes the sizes of cylinders by the classification in $A_0$.

\begin{proposition}(\cite{lesL.W08})\label{A0}
$\beta \in A_0$ if and only if there exists a positive constant $C_0$ such that for all $x \in [0,1)$ and $n \geq 1$,
\[
C_0 \beta^{-n} \leq |I(\varepsilon_1(x), \varepsilon_2(x), \cdots, \varepsilon_n(x))| \leq \beta^{-n}.
\]
\end{proposition}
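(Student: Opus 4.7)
The plan is to prove both implications separately, noting that the upper bound $|I(\varepsilon_1(x),\dots,\varepsilon_n(x))|\le\beta^{-n}$ has already been recorded in the remark following the cylinder definition; both directions of the biconditional therefore reduce to controlling the lower bound $C_0\beta^{-n}\le|I|$.

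For the implication $\beta\in A_0\Longrightarrow$ uniform lower bound, I would set $M:=\sup_{n\ge 1}\ell_n(1,\beta)<\infty$, so that $M_n(\beta)\le M$ for every $n$. Given an arbitrary admissible block $(\varepsilon_1,\dots,\varepsilon_n)\in\Sigma_\beta^n$, Lemma \ref{full} applied with $m=M+1>M_n(\beta)$ produces a full cylinder $I(\varepsilon_1,\dots,\varepsilon_n,0,\dots,0)$ (with $M+1$ trailing zeros) of length exactly $\beta^{-(n+M+1)}$. Since this full cylinder sits inside $I(\varepsilon_1,\dots,\varepsilon_n)$, the choice $C_0:=\beta^{-(M+1)}$ works uniformly in $n$ and in the prefix.

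For the converse I would argue contrapositively: assume $\{\ell_n(1,\beta)\}$ is unbounded, pick a subsequence along which $\ell_n(1,\beta)\to\infty$, and test the hypothesis against the extremal cylinders $I_n:=I(\varepsilon_1^*(1),\dots,\varepsilon_n^*(1))$, which are nonempty since $\varepsilon^*(1,\beta)$ is the $\beta$-expansion of a limit of points in $[0,1)$. The key estimate to establish is
\[
|I_n|\;\le\;\beta^{-n}\sum_{j\ge 1}\varepsilon_{n+j}^*(1)\beta^{-j}\;\le\;\beta^{-n-\ell_n(1,\beta)},
\]
whose second inequality is immediate: the first $\ell_n(1,\beta)$ digits of $\sigma^n\varepsilon^*(1,\beta)$ vanish, so the remaining tail represents a value in $[0,1]$ after the factor $\beta^{-\ell_n(1,\beta)}$ is pulled out. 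Along the chosen subsequence this forces $|I_n|/\beta^{-n}\to 0$, ruling out any uniform $C_0>0$.

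The main obstacle is justifying the first inequality, namely that the right endpoint of $I_n$ equals $\sum_{i=1}^n\varepsilon_i^*(1)\beta^{-i}+\beta^{-n}\cdot\bigl(\sigma^n\varepsilon^*(1,\beta)\bigr)$ read as a real number. Writing a generic point of $I_n$ as $\sum_{i=1}^n\varepsilon_i^*(1)\beta^{-i}+\beta^{-n}y$ with $y\in[0,1)$ having $\beta$-expansion $(y_1,y_2,\dots)$, Proposition \ref{parry pr}(i) at the shift $k=0$ forces $(y_1,y_2,\dots)\prec\sigma^n\varepsilon^*(1,\beta)$, which bounds $y$ as desired. The intermediate shifts $1\le k<n$ are then automatic: if $(y_1,y_2,\dots)\prec\sigma^n\varepsilon^*(1,\beta)$, the self-admissibility property $\sigma^k\varepsilon^*(1,\beta)\preceq\varepsilon^*(1,\beta)$ characterizing $\varepsilon^*(1,\beta)$ gives $(\varepsilon_{k+1}^*(1),\dots,\varepsilon_n^*(1),y_1,y_2,\dots)\prec\sigma^k\varepsilon^*(1,\beta)\preceq\varepsilon^*(1,\beta)$, while the shifts $k\ge n$ reduce to the admissibility of the tail $(y_1,y_2,\dots)$ itself. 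This is the one place where the specific structure of $\varepsilon^*(1,\beta)$, rather than a generic admissible prefix, is essential.
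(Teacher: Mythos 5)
Your argument is correct. Note, however, that the paper does not prove this proposition at all --- it is quoted from Li and Wu \cite{lesL.W08} --- so there is no in-paper proof to compare against; your two-sided argument (full cylinders via Lemma \ref{full} with $m=M+1$ for the lower bound when $\beta\in A_0$, and the cylinders $I(\varepsilon_1^*(1),\dots,\varepsilon_n^*(1))$ of length at most $\beta^{-n-\ell_n(1,\beta)}$ for the converse) is exactly the standard proof from that reference. The only superfluous part is your discussion of the intermediate shifts $1\le k<n$: for the upper bound on $|I_n|$ you only need the necessary condition at shift $k=0$, namely that every admissible continuation $(y_1,y_2,\dots)$ of the prefix satisfies $(y_1,y_2,\dots)\prec\sigma^n\varepsilon^*(1,\beta)$, so the sufficiency analysis can be dropped.
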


\subsection{Approximation method for the $\beta$-shift}\label{Approximation method}
Define a projection function $\pi_\beta: \mathcal{S}_\beta \longrightarrow [0,1]$ as following
\[
\pi_\beta(\omega) = \sum_{i=1}^\infty \frac{\omega_i}{\beta^i}
\]
for any $\omega = (\omega_1,\omega_2,\cdots,\omega_n,\cdots) \in \mathcal{S}_\beta$.
Then $\pi_\beta$ is one-to-one except at the countable many points for which their $\beta$-expansions are finite and the restriction of $\pi_\beta$ to which is two-to-one. Let $\beta > \beta^\prime >1$. Since $\Sigma_{\beta^\prime} \subset \Sigma_\beta$, we know that
\[
H_\beta^{\beta^\prime}:= \pi_\beta(\Sigma_{\beta^\prime}) = \left\{ \sum_{i=1}^\infty \frac{\omega^\prime_i}{\beta^i}:(\omega^\prime_1,\cdots,\omega^\prime_n,\cdots) \in \Sigma_{\beta^\prime} \right\}
\]
is a Cantor set of $\pi_\beta(\Sigma_\beta)=[0,1)$. Define the function $h: H_\beta^{\beta^\prime} \longrightarrow [0,1)$ as
\[
h(x) = \pi_{\beta^\prime}(\varepsilon(x,\beta))
\]
for any $x \in H_\beta^{\beta^\prime}$.

\begin{proposition}(\cite{lesBL14})\label{h}
Let $\beta>\beta^\prime>1$.\\
(i) For any $x \in H_\beta^{\beta^\prime}$, we have $\varepsilon(h(x),\beta^\prime)= \varepsilon(x,\beta)$.\\
(ii) The function $h$ is bijective and strictly increasing on $H_\beta^{\beta^\prime}$.\\
(iii) The function $h$ is continuous on $H_\beta^{\beta^\prime}$.\\
(iv) If additionally assume that $\beta^\prime \in A_0$ with $M=\sup\{\ell_n(1,\beta^\prime): n \geq 1\}$, then $h$ is H\"{o}lder continuous on $H_\beta^{\beta^\prime}$. More precisely,
\[
|h(x)-h(y)| \leq \beta^{\prime{M+2}}|x-y|^{\frac{\log\beta^\prime}{\log\beta}}
\]
for any $x,y \in H_\beta^{\beta^\prime}$.
\end{proposition}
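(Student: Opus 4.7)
The plan is to dispatch (i)-(iii) as direct consequences of Parry's characterization of admissible sequences (Proposition~\ref{parry pr}) and the definitional structure of $h$, and to reserve the main work for the H\"{o}lder estimate (iv). For (i), I would observe that for any admissible $\omega\in\Sigma_{\beta^\prime}$ Parry's condition forces the greedy $\beta^\prime$-expansion of $\pi_{\beta^\prime}(\omega)$ to return $\omega$ itself; applying this with $\omega=\varepsilon(x,\beta)\in\Sigma_{\beta^\prime}$ yields $\varepsilon(h(x),\beta^\prime)=\varepsilon(x,\beta)$. Part (ii) then follows quickly: injectivity from (i), surjectivity via the explicit inverse $h^{-1}(z):=\pi_\beta(\varepsilon(z,\beta^\prime))$, and strict monotonicity from the fact that both $\pi_\beta$ and $\pi_{\beta^\prime}$ convert the lexicographic order on $\Sigma_{\beta^\prime}$ into the usual order on the reals.

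For (iii), given $x_k\to x$ in $H_\beta^{\beta^\prime}$ and $N\geq 1$, one argues (with one-sided limits at the countable set of right endpoints of $\beta$-cylinders, combined with monotonicity from (ii)) that the first $N$ digits of $\varepsilon(x_k,\beta)$ eventually agree with those of $\varepsilon(x,\beta)$; by (i) the points $h(x_k)$ and $h(x)$ then share the first $N$ digits of the $\beta^\prime$-expansion and lie in a common $\beta^\prime$-cylinder of length at most $(\beta^\prime)^{-N}$, and letting $N\to\infty$ yields continuity.

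The core of the proposition is (iv). Given $x\neq y$ in $H_\beta^{\beta^\prime}$, let $n$ be the largest integer with $\varepsilon_i(x,\beta)=\varepsilon_i(y,\beta)$ for $1\leq i\leq n$. By (i), $h(x)$ and $h(y)$ share the first $n$ digits of the $\beta^\prime$-expansion and hence lie in a common $n$-th $\beta^\prime$-cylinder, giving the upper bound $|h(x)-h(y)|\leq (\beta^\prime)^{-n}$. The H\"{o}lder inequality then reduces to the matching lower separation $|x-y|\geq\beta^{-(n+M+2)}$; granting this,
\[
|h(x)-h(y)|\leq (\beta^\prime)^{-n}=(\beta^\prime)^{M+2}\cdot(\beta^\prime)^{-(n+M+2)}=(\beta^\prime)^{M+2}\,\beta^{-(n+M+2)\log\beta^\prime/\log\beta}\leq(\beta^\prime)^{M+2}|x-y|^{\log\beta^\prime/\log\beta}
\]
as required. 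To produce the separation, I would use the affine action of $T_\beta^n$ on the shared $n$-th $\beta$-cylinder to reduce to the base case $n=0$ and then invoke Lemma~\ref{full} on the $\beta^\prime$-side: since $\ell_k(1,\beta^\prime)\leq M$ for all $k$ (by $\beta^\prime\in A_0$), appending $M+1$ zeros after any admissible prefix yields a full $\beta^\prime$-cylinder of length $(\beta^\prime)^{-(n+M+2)}$, and Proposition~\ref{A0} together with this full-cylinder scaffolding is what forces the clean $\beta^{-(n+M+2)}$ gap in the $\beta$-geometry of $H_\beta^{\beta^\prime}$.

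The main obstacle is precisely this separation estimate: because the $A_0$ hypothesis is imposed on $\beta^\prime$ rather than on $\beta$, Lemma~\ref{full} cannot be applied directly to $\beta$-cylinders (which may be arbitrarily shorter than $\beta^{-n}$ when $\beta\notin A_0$), so the full-cylinder control must be obtained on the $\beta^\prime$-side and transported back through the correspondence $\pi_\beta\leftrightarrow\pi_{\beta^\prime}$ with enough care to produce the exponent $M+2$ cleanly.
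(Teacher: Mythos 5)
First, a framing remark: the paper does not prove Proposition \ref{h} at all — it is imported verbatim from \cite{lesBL14} — so there is no internal proof to compare against; what follows is an assessment of your argument on its own terms. Your treatment of (i)--(iii) is essentially the standard one and is correct, with one under-justified point in (iii): the first $N$ digits of $\varepsilon(x_k,\beta)$ do eventually agree with those of $\varepsilon(x,\beta)$ when $x_k\to x$ inside $H_\beta^{\beta'}$, but this is not a consequence of monotonicity and one-sided limits alone; it needs the observation that $\sup H_\beta^{\beta'}=\pi_\beta(\varepsilon^*(1,\beta'))<1$, since shifts of points of $H_\beta^{\beta'}$ remain in $H_\beta^{\beta'}$ and a bounded first-disagreement index $m$ would force $x-x_k\geq\beta^{-m}\bigl(1-\sup H_\beta^{\beta'}\bigr)>0$.

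The genuine gap is in (iv): the separation estimate $|x-y|\geq\beta^{-(n+M+2)}$, with $n$ the length of the common prefix, is false. Take $\beta'=(1+\sqrt{5})/2$, so that $\varepsilon^*(1,\beta')=(10)^\infty$, $M=1$, and $\Sigma_{\beta'}$ consists of the $0$-$1$ sequences avoiding the factor $11$; take $\beta=1.62$. Let $x=\pi_\beta(10^\infty)=1/\beta$ and $y_k=\pi_\beta\bigl(0(10)^k0^\infty\bigr)$; both lie in $H_\beta^{\beta'}$ and their $\beta$-expansions already differ in the first digit, so $n=0$ and your claim demands $|x-y_k|\geq\beta^{-3}\approx0.235$, whereas $x-y_k\to\frac{1}{\beta}-\frac{1}{\beta^2-1}=\frac{\beta^2-\beta-1}{\beta(\beta^2-1)}\approx0.0017$. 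The failure is structural rather than a matter of sharpening constants: the lower bound actually obtainable from ``first disagreement at position $n+1$'' is $|x-y|\geq\beta^{-(n+1)}\bigl(1-\sup H_\beta^{\beta'}\bigr)$, and $1-\pi_\beta(\varepsilon^*(1,\beta'))\to0$ as $\beta\downarrow\beta'$, so it can never be bounded below by $\beta^{-(M+1)}$; Lemma \ref{full} and Proposition \ref{A0} on the $\beta'$ side control $\beta'$-cylinder lengths, not $\pi_\beta$-gaps, and cannot rescue this. Consequently the whole scheme — bound $|h(x)-h(y)|$ by $(\beta')^{-n}$ from the common prefix, then lower-bound $|x-y|$ by $\beta^{-(n+M+2)}$ — cannot yield the stated inequality: in the example the prefix bound gives only $|h(x)-h(y_k)|\leq1$ while the claimed right-hand side is about $0.007$ (the true value is about $4\times10^{-5}$). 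A correct proof must exploit the cancellation your reduction discards: when $x>y$ are close yet disagree early, $\sigma^{n+1}\varepsilon(y,\beta)$ is forced lexicographically close to $\varepsilon^*(1,\beta')$ and $\sigma^{n+1}\varepsilon(x,\beta)$ close to $0^\infty$, which forces $h(x)-h(y)$ to be correspondingly small. Since the separation estimate is exactly the step you flag as the ``main obstacle'' and leave unproved, the gap is acknowledged but fatal.
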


The function $h$ induces a method to provide a lower bound of the Hausdorff dimension of a given set $E \subset [0,1)$. Firstly, consider a subset $E\cap H_\beta^{\beta^\prime}$ of $E$ and use the H\"{o}lder function $h$ in Proposition \ref{h} to transfer it to $h(E\cap H_\beta^{\beta^\prime})$, whose Hausdorff dimension may be easier to be obtained by choosing $\beta^\prime \in A_0$ or $\beta^\prime$ satisfying that $\mathcal{S}_{\beta^\prime}$ is subshift of finite type. Secondly, give a lower bound of the Hausdorff dimension of $h(E\cap H_\beta^{\beta^\prime})$ and then by the H\"{o}lder exponent of $h$ in Proposition \ref{h} (iv) have a lower bound of the Hausdorff dimension of $E\cap H_\beta^{\beta^\prime}$, also that of $E$. That is,
\[
\dim_H E \geq \dim_H (E\cap H_\beta^{\beta^\prime}) \geq \frac{\log \beta^\prime}{\log \beta} \dim_H h(E\cap H_\beta^{\beta^\prime}).
\]
Finally, let $\beta^\prime$ approximates to $\beta$. In Section 4, we will apply this approximation method to prove our desired results.

\section{Metric results}

For any real number $ x \in [0,1)$ and $n \geq 1$, it is clear to see that
\begin{equation}\label{jie}
\frac{1}{\beta^{n + \ell_n(x)+1}} \leq x -\omega_n(x) \leq \frac{1}{\beta^{n + \ell_n(x)}}
\end{equation}
where $\ell_n(x) = \sup\left\{k \geq 0: \varepsilon_{n +1}(x)=\cdots=\varepsilon_{n +k}(x) =0 \right\}$ is the length of the longest string of zeros just after the $n$-th digit in the $\beta$-expansion of $x$. The quantity $\ell_n(x)$ has been used by Fang, Wu and Li \cite{lesFWL16, lesFWLarXiv}. The inequalities (\ref{jie}) indicate that $\ell_n(x)$ plays an important role in the approximation theory of $\beta$-expansions.

To estimate $\ell_n(x)$, we first define $r_n(x)$ the maximal length of the strings of zero's in the block of the first $n$ digits of the $\beta$-expansion of $x \in [0,1)$. That is,
\begin{equation*}
r_n(x) = \sup\{k \geq 0: \varepsilon_{i+1}(x)= \cdots = \varepsilon_{i+k}(x) =0\ \text{for some}\  0 \leq i \leq n-k\}.
\end{equation*}
Sometimes the quantity $r_n(x)$ is called \emph{run-length function} for $\beta$-expansions.
Tong et al.~\cite{lesTYZ16} studied the order of magnitude of $r_n(x)$ and investigated the Hausdorff dimensions for the corresponding exceptional sets.

\begin{lemma}(\cite[Theorem 1.1]{lesTYZ16})\label{sup ln xia}
Let $\beta >1$ be a real number. Then for $\lambda$-almost all $x \in [0,1)$,
\begin{equation*}
\lim_{n \to \infty} \frac{r_n(x)}{\log_\beta n} = 1.
\end{equation*}
\end{lemma}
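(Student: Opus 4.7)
The strategy is to prove the two-sided asymptotic by establishing $\limsup_n r_n(x)/\log_\beta n \leq 1$ and $\liminf_n r_n(x)/\log_\beta n \geq 1$ separately, each via the Borel--Cantelli lemma. The key quantitative input is a two-sided estimate
\[
c_1 \beta^{-k} \leq \lambda(E_i^{(k)}) \leq c_2 \beta^{-k}
\]
uniformly in the starting position $i \geq 0$, where $E_i^{(k)} := \{x \in [0,1) : \varepsilon_{i+1}(x) = \cdots = \varepsilon_{i+k}(x) = 0\} = T_\beta^{-i}[0,\beta^{-k})$. The upper bound follows immediately from R\'enyi's cardinality estimate $\sharp\Sigma_\beta^i \leq \beta^{i+1}/(\beta-1)$ together with $|I(\varepsilon_1,\ldots,\varepsilon_i,0^k)|\leq \beta^{-(i+k)}$. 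The lower bound uses the Gelfond--Parry density formula, which bounds $d\mu_\beta/d\lambda$ both above and below by positive constants, combined with the $T_\beta$-invariance of $\mu_\beta$ applied to the full cylinder $[0,\beta^{-k})$.

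For the upper bound on $r_n$, fix $\varepsilon > 0$ and set $k_n = \lceil(1+\varepsilon)\log_\beta n\rceil$. A union bound gives $\lambda(r_n \geq k_n) \leq c_2\, n\beta^{-k_n} \leq c_2\, n^{-\varepsilon}$. Along the subsequence $n_j = 2^j$ this is summable, so by Borel--Cantelli the event $r_{n_j}(x) < k_{n_j}$ holds eventually for $\lambda$-a.e.\ $x$. Since $r_n$ is nondecreasing in $n$, interpolation between consecutive $n_j$'s then yields $\limsup_n r_n(x)/\log_\beta n \leq 1+\varepsilon$ almost surely, and letting $\varepsilon$ run through a countable sequence tending to $0$ settles this direction.

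For the lower bound, fix $\varepsilon \in (0,1)$ and set $k_n = \lfloor(1-\varepsilon)\log_\beta n\rfloor$. Choose a large integer $M$ depending only on $\beta$ and the mixing constants, and partition a prefix of $\{1,\ldots,n\}$ into $m_n \asymp n/(Mk_n)$ disjoint blocks of length $k_n$ with consecutive blocks separated by gaps of length $(M-1)k_n$. On the event $\{r_n(x) < k_n\}$ none of these blocks consists entirely of zeros. Using the exponential $\varphi$-mixing of $(T_\beta,\mu_\beta)$ recalled in the introduction (Philipp~\cite{lesPhi67}, Aaronson--Nakada~\cite{lesAN05}), the conditional probabilities chain up to give
\[
\mu_\beta\bigl(r_n(x) < k_n\bigr) \leq \bigl(1 - c_1\beta^{-k_n} + C\theta^{(M-1)k_n}\bigr)^{m_n},
\]
where $\theta \in (0,1)$ is the mixing rate. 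Choosing $M$ so large that $C\theta^{(M-1)k_n} \leq c_1 \beta^{-k_n}/2$ (possible because both sides decay exponentially in $k_n$), one obtains $\mu_\beta(r_n < k_n) \leq \exp(-c\,n^\varepsilon/\log n)$, and the equivalence of $\mu_\beta$ with $\lambda$ transfers this bound to $\lambda$. Summability in $n$ then lets a second application of Borel--Cantelli yield $\liminf_n r_n(x)/\log_\beta n \geq 1-\varepsilon$ almost surely.

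The main obstacle is the lower bound. Because the digits $\varepsilon_n(x)$ are not independent when $\beta \notin \mathbb{Z}$, the product bound familiar from classical Erd\H{o}s--R\'enyi run-length theory for i.i.d.\ sequences is unavailable, and one must instead invoke exponential mixing. The delicate point is to insert gaps between the tested blocks large enough that the mixing error $\theta^{(M-1)k_n}$ is genuinely negligible compared with the main term $c_1\beta^{-k_n}$; once a suitable uniform $M$ is chosen, everything else---the dyadic subsequence, the monotonicity interpolation, and the counting estimate on admissible words---is routine.
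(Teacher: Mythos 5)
The paper does not prove this lemma at all: it is imported verbatim as Theorem 1.1 of Tong, Yu and Zhao \cite{lesTYZ16}, so there is no internal proof to compare yours against; I can only judge your argument on its own terms, and it is correct and complete in outline. The identity $E_i^{(k)}=T_\beta^{-i}[0,\beta^{-k})$, the $T_\beta$-invariance of $\mu_\beta$, and the two-sided Gel'fond--Parry density bounds do give $c_1\beta^{-k}\le\lambda(E_i^{(k)})\le c_2\beta^{-k}$ uniformly in $i$ (in fact the R\'enyi count is not needed even for the upper bound, since $[0,\beta^{-k})$ is a full cylinder of Lebesgue measure exactly $\beta^{-k}$ and the same invariance-plus-density argument bounds $\lambda(E_i^{(k)})$ from above). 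The union bound, Borel--Cantelli along $n_j=2^j$, and the monotonicity of $r_n$ correctly yield $\limsup_n r_n/\log_\beta n\le 1$, and the blocking argument with gaps of length $(M-1)k_n$ is the right substitute for the independence used in the classical Erd\H{o}s--R\'enyi proof, given that Aaronson--Nakada's exponential $\varphi$-mixing holds for every $\beta>1$.

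The one step you should write out more carefully is the choice of $M$. The requirement $C\theta^{(M-1)k_n}\le c_1\beta^{-k_n}/2$ is equivalent to $(\theta^{M-1}\beta)^{k_n}\le c_1/(2C)$, so ``both sides decay exponentially in $k_n$'' is not by itself sufficient: the main term decays at rate $\beta^{-k_n}$ while the error decays at rate $\theta^{(M-1)k_n}$, and the comparison only goes the right way once $M-1>\log\beta/\log(1/\theta)$, after which the inequality holds for all large $n$ because $k_n\to\infty$. With that fixed, $\mu_\beta(r_n<k_n)\le\exp(-c\,n^\varepsilon/\log n)$ is summable over all $n$, the equivalence of $\mu_\beta$ and $\lambda$ transfers the bound, and the second Borel--Cantelli application closes the $\liminf\ge 1$ half. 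This is the standard route for run-length laws over mixing systems and is presumably close in spirit to the argument of \cite{lesTYZ16}.
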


Combing this with the relation between $r_n(x)$ and $\ell_n(x)$, we have the following

\begin{proposition}\label{sup ln}
Let $\beta >1$ be a real number. Then for $\lambda$-almost all $x \in [0,1)$,
\begin{equation*}
\limsup_{n \to \infty} \frac{\ell_n(x)}{\log_{\beta} n} =1.
\end{equation*}
Moreover, for any real number $x \in [0,1)$ whose $\beta$-expansion is infinite, we have
\begin{equation*}
\liminf_{n \to \infty} \frac{\ell_n(x)}{\log_\beta n} = 0.
\end{equation*}
\end{proposition}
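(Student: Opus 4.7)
The plan is to deduce both statements from the elementary relationship between $\ell_n(x)$ and the run-length function $r_n(x)$, together with Lemma \ref{sup ln xia}. The key observation is that $\ell_n(x)$ records the length of a single zero run, namely the one starting just after position $n$, while $r_n(x)$ is the supremum of such lengths over all runs entirely contained in the first $n$ digits. Two directions of comparison will be used throughout: first, the zero run of length $\ell_n(x)$ starting at position $n+1$ sits inside positions $1,\ldots,n+\ell_n(x)$, giving
\[
\ell_n(x)\leq r_{n+\ell_n(x)}(x);
\]
second, any run realizing $r_n(x)$ starts at some position $i_n+1$ with $i_n\leq n$ and satisfies $\ell_{i_n}(x)\geq r_n(x)$.

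For the \emph{limsup} assertion I would work on the full-measure set where $r_n(x)/\log_\beta n\to 1$ (which also has $\beta$-expansions infinite almost surely). For the upper bound, plug the first comparison into Lemma \ref{sup ln xia} to obtain $\ell_n(x)\leq (1+\delta)\log_\beta(n+\ell_n(x))$ for every $\delta>0$ and all sufficiently large $n$. A short case analysis then closes the argument: if $\ell_n(x)\leq n$, the right-hand side is at most $(1+\delta)\log_\beta n+O(1)$, giving $\limsup \ell_n(x)/\log_\beta n\leq 1+\delta$; if instead $\ell_n(x)>n$, then $\ell_n(x)\leq(1+\delta)\log_\beta \ell_n(x)+O(1)$, which bounds $\ell_n(x)$ and contradicts $\ell_n(x)>n\to\infty$. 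For the matching lower bound, pick $i_n\leq n$ with $\ell_{i_n}(x)\geq r_n(x)\geq(1-\delta)\log_\beta n$ for large $n$. Because $r_n(x)\to\infty$ while $\ell_i(x)<\infty$ for every fixed $i$, the indices $i_n$ must tend to infinity, and along this subsequence
\[
\frac{\ell_{i_n}(x)}{\log_\beta i_n}\geq \frac{(1-\delta)\log_\beta n}{\log_\beta i_n}\geq 1-\delta,
\]
so $\limsup_{k\to\infty}\ell_k(x)/\log_\beta k\geq 1-\delta$. Letting $\delta\downarrow 0$ on both sides gives the desired equality.

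The \emph{liminf} statement requires no measure-theoretic input. If the $\beta$-expansion of $x$ is infinite, then by the definition given in the introduction there are infinitely many indices $n$ with $\varepsilon_{n+1}(x)\neq 0$, and for each such $n$ one has $\ell_n(x)=0$. This forces $\liminf_{n\to\infty}\ell_n(x)/\log_\beta n\leq 0$, which together with the trivial bound $\ell_n(x)\geq 0$ yields equality to $0$.

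The only delicate point is the self-referential inequality $\ell_n(x)\leq(1+\delta)\log_\beta(n+\ell_n(x))$; once the case split rules out the pathological regime $\ell_n(x)>n$, everything else is routine bookkeeping flowing directly from Lemma \ref{sup ln xia} and the definitions.
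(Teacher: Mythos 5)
Your proposal is correct and follows essentially the same route as the paper: both directions of the limsup statement are obtained by sandwiching $\ell_n(x)$ between run-lengths via $\ell_n(x)\leq r_{n+\ell_n(x)}(x)$ and $\ell_{i_n}(x)\geq r_n(x)$ for the starting index $i_n$ of a maximal run, and then invoking Lemma \ref{sup ln xia}, while the liminf part uses the same observation that infinitely many nonzero digits force $\ell_n(x)=0$ infinitely often. Your explicit case split to resolve the self-referential inequality $\ell_n(x)\leq(1+\delta)\log_\beta(n+\ell_n(x))$ is in fact a slightly more careful justification of a step the paper passes over quickly.
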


\begin{proof}
We first prove the result of liminf part.
Let $x \in [0,1)$ be a real number whose $\beta$-expansion is infinite. Then there exists a subsequence of digits $\{\varepsilon_{n_k}(x): k \geq 1\}$ with $\varepsilon_{n_k}(x) \neq 0$ for all $k \geq 1$. So, by the definition of $\ell_n(x)$, we have $\ell_{n_k-1}(x) = 0$ for any $k \geq 1$ and hence
\begin{equation*}
\liminf_{n \to \infty} \frac{\ell_n(x)}{\log_\beta n} \leq \liminf_{k \to \infty} \frac{\ell_{n_k-1}(x)}{\log_\beta (n_k-1) } =0.
\end{equation*}
Therefore, $\liminf\limits_{n \to \infty} \ell_n(x)/(\log_\beta n) = 0$ by the definition of $\ell_n(x)$.
Now we turn to the result for limsup part.
Let $B$ be the set such that the Lemma \ref{sup ln xia} does not hold and let $A = [0,1)\backslash B$. Then $\lambda(A)=1$. For any $x \in A$ and $n \geq 1$, we know that $r_{n+\ell_n(x)}(x) = \max_{1 \leq k \leq n}\ell_k(x)$ by the definitions of $\ell_n(x)$ and $r_n(x)$. So there exists $1 \leq k_n := k_n(x) \leq n$ such that $\ell_{k_n}(x) = \max_{1 \leq k \leq n}\ell_k(x)$. Therefore, $r_{n+\ell_n(x)}(x) = \ell_{k_n}(x)$ and hence
\[
\frac{\ell_{k_n}(x)}{\log_\beta k_n} \geq \frac{r_{n+\ell_n(x)}(x)}{\log_\beta n} \geq \frac{r_{n+\ell_n(x)}(x)}{\log_\beta (n+\ell_n(x))}.
\]
Combining this with Lemma \ref{sup ln xia}, we deduce that
\[
\limsup_{n \to \infty} \frac{\ell_n(x)}{\log_\beta n} \geq \limsup_{n \to \infty} \frac{\ell_{k_n}(x)}{\log_\beta k_n} \geq \liminf_{n \to \infty} \frac{r_{n+\ell_n(x)}(x)}{\log_\beta (n+\ell_n(x))} \geq \liminf_{n \to \infty} \frac{r_{n}(x)}{\log_\beta n} = 1.
\]
On the other hand, note that $r_{n+\ell_n(x)}(x) = \max_{1 \leq k \leq n}\ell_k(x) \geq \ell_n(x)$, so
\[
\limsup_{n \to \infty} \frac{\ell_n(x)}{\log_\beta n} \leq \limsup_{n \to \infty} \frac{r_{n+\ell_n(x)}(x)}{\log_\beta (n+\ell_n(x))} \leq \limsup_{n \to \infty} \frac{r_n(x)}{\log_\beta n} =1.
\]
Therefore, we get that $\limsup\limits_{n \to \infty} \ell_n(x)/(\log_\beta n) = 1$ for $\lambda$-almost all $x \in [0,1)$.
\end{proof}

Now we are ready to prove Theorem \ref{convergents} and Proposition \ref{sup}.

\begin{proof}[Proof of Theorem \ref{convergents}]
For any $x \in [0,1)$ and $n \geq 1$, by (\ref{jie}), we obtain that
\[
\lim_{n \to \infty} \frac{1}{n}\log_\beta (x - \omega_n(x)) =- \lim_{n \to \infty} \frac{\ell_n(x)}{n}-1.
\]
In view of Proposition \ref{sup ln}, we know that $\lim\limits_{n \to \infty} \ell_n(x)/n =0$ for $\lambda$-almost all $x \in [0,1)$. Thus,
\[
\lim_{n \to \infty} \frac{1}{n}\log_\beta (x - \omega_n(x)) =- 1
\]
for $\lambda$-almost all $x \in [0,1)$.
\end{proof}

\begin{proof}[Proof of Proposition \ref{sup}]
(i): Notice that $\phi(n) \to \infty$ as $n \to \infty$ since $\liminf\limits_{n \to \infty}\phi(n)/n >1$.
So, it follows from the inequalities (\ref{jie}) that
\begin{equation*}
\limsup_{n \to \infty} \frac{1}{\phi(n)}\log_\beta (x -\omega_n(x)) = -1\ \ \  \text{if and only if}\ \ \
\liminf_{n \to \infty} \frac{n+\ell_n(x)}{\phi(n)} = 1
\end{equation*}
for any $x \in [0,1)$.
We claim that
\begin{equation*}
\left\{x \in[0,1) : \liminf_{n \to \infty} \frac{n+\ell_n(x)}{\phi(n)} = 1 \right\} \subseteq
 \left\{x \in[0,1): \text{the $\beta$-expansion of $x$ is finite} \right\}.
\end{equation*}
In fact, suppose that $x \in [0,1)$ whose $\beta$-expansion is infinite, that is, there exists a subsequence $\{n_k\}_{k\geq 1}$ such that $\varepsilon_{n_k-1}(x) \neq 0$. Then $\ell_{n_k}(x) = 0$ by the definition of $\ell_n(x)$ and hence
\[
\liminf_{n \to \infty} \frac{n+\ell_n(x)}{\phi(n)} \leq \liminf_{k \to \infty} \frac{n_k}{\phi(n_k)} = \frac{1}{\limsup\limits_{k \to \infty}\phi(n_k)/n_k} \leq \frac{1}{\liminf\limits_{k \to \infty}\phi(n_k)/n_k}.
\]
Therefore,
\[
\liminf_{n \to \infty} \frac{n+\ell_n(x)}{\phi(n)} \leq \frac{1}{\liminf\limits_{n \to \infty}\phi(n)/n}<1.
\]
Thus we get the desired result since the set of the points  with finite $\beta$-expansions is a countable set.

(ii): Since $\limsup\limits_{n \to \infty}\phi(n)/n <1$, we deduce that
\[
\liminf_{n \to \infty} \frac{n+\ell_n(x)}{\phi(n)} \geq \liminf_{n \to \infty} \frac{n}{\phi(n)} = \frac{1}{\limsup\limits_{n \to \infty}\phi(n)/n} >1.
\]
By (\ref{jie}), we know that
\[
\frac{\log_\beta (x -\omega_n(x))}{\phi(n)} \leq - \frac{n+\ell_n(x)}{\phi(n)},
\]
which implies that
\[
\limsup_{n \to \infty} \frac{1}{\phi(n)}\log_\beta (x -\omega_n(x))< -1.
\]
\end{proof}

\section{Dimensional results}
The inequalities (\ref{jie}) say that
\begin{equation*}
\left\{x \in [0,1): \liminf_{n \to \infty} \frac{1}{\phi(n)}\log_\beta (x -\omega_n(x)) = -1 \right\} =
 \left\{x \in [0,1): \limsup_{n \to \infty} \frac{n + \ell_n(x)}{\phi(n)} = 1 \right\}
\end{equation*}
if $\phi(n) \to \infty$ as $n \to \infty$. Now it leads us to consider the Hausdorff dimension of the set
\begin{equation*}
F_\phi= \left\{x \in [0,1): \limsup_{n \to \infty} \frac{\ell_n(x)}{\phi(n)} = 1 \right\}.
\end{equation*}
Firstly, we will give an upper bound for the Hausdorff dimension of the set $F_\phi$.

\subsection{Upper bound}
Denote
\[
E_\phi =  \big\{x \in [0,1): \ell_n(x) \geq \phi(n)\ i.o.\big\},
\]
where $i.o.$ means infinitely often. It is clear that $F_\phi \subseteq E_{(1-\delta)\phi}$ for any $0< \delta <1$. So we will determine the upper bound of the Hausdorff dimension of the set $F_\phi $ by giving an upper bound of the Hausdorff dimension of $E_\phi$.

\begin{lemma}\label{E}
Let $\beta >1$ be a real number. Assume that $\phi$ is a nonnegative function defined on $\mathbb{N}$.
Then
\begin{equation*}
\dim_{\rm H} E_\phi \leq \frac{1}{1+\liminf\limits_{n \to \infty} \phi(n)/n}.
\end{equation*}
\end{lemma}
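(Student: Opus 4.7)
The plan is a clean covering argument driven by Proposition~\ref{Renyi}. Write $\eta := \liminf_{n\to\infty}\phi(n)/n$. If $\eta = 0$ the right-hand side equals $1$ and the bound is trivial, so I will work in the case $\eta>0$ (with the convention that $\eta=\infty$ just means $\phi(n)/n \to \infty$). The starting point is the tautology
\[
E_\phi = \bigcap_{N\ge 1}\bigcup_{n\ge N} E_\phi^{(n)}, \qquad E_\phi^{(n)} := \{x\in[0,1):\ell_n(x)\ge \phi(n)\},
\]
so it suffices to produce, for each large $N$, an economical cover of $\bigcup_{n\ge N}E_\phi^{(n)}$.

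For each admissible prefix $(\varepsilon_1,\ldots,\varepsilon_n)\in \Sigma_\beta^n$, a point $x\in E_\phi^{(n)}$ whose first $n$ digits are this prefix must additionally satisfy $\varepsilon_{n+1}(x)=\cdots=\varepsilon_{n+m_n}(x)=0$ with $m_n := \lceil\phi(n)\rceil$. Hence
\[
E_\phi^{(n)} \subseteq \bigcup_{(\varepsilon_1,\ldots,\varepsilon_n)\in \Sigma_\beta^n} I(\varepsilon_1,\ldots,\varepsilon_n,\underbrace{0,\ldots,0}_{m_n}),
\]
and each cylinder in this union has diameter at most $\beta^{-(n+m_n)}$ by the length bound recalled in Section~2. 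Since $\#\Sigma_\beta^n \le \beta^{n+1}/(\beta-1)$ by Proposition~\ref{Renyi}, for any $s>0$ the $s$-cost of this cover is bounded by
\[
\mathcal{H}^s_{\beta^{-N}}(E_\phi) \le \frac{\beta}{\beta-1}\sum_{n\ge N}\beta^{(1-s)n - s\phi(n)}.
\]

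Now fix any $s>1/(1+\eta)$ and choose $\epsilon>0$ small enough that $s(1+\eta-\epsilon)>1$ (taking $\eta-\epsilon$ to be an arbitrarily large constant if $\eta=\infty$). By the definition of $\eta$, for $n$ large one has $\phi(n)\ge (\eta-\epsilon)n$, so each summand is dominated by $\beta^{(1-s(1+\eta-\epsilon))n}$, a convergent geometric series whose tail tends to $0$ as $N\to\infty$. Since the diameters $\beta^{-(n+m_n)}$ tend to $0$ uniformly in $n\ge N$, this yields $\mathcal{H}^s(E_\phi)=0$, hence $\dim_{\rm H} E_\phi \le s$. Letting $s \downarrow 1/(1+\eta)$ gives the desired inequality. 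There is no serious obstacle here; the only minor care needed is dispatching the corner cases $\eta=0$ and $\eta=\infty$, which are handled by the trivial bound $\dim_{\rm H} E_\phi \le 1$ and by letting $s$ be arbitrarily small in the above scheme, respectively.
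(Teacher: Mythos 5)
Your argument is correct and is essentially the paper's own proof: the same natural cover of $\bigcup_{n\ge N}\{x:\ell_n(x)\ge\phi(n)\}$ by cylinders $I(\varepsilon_1,\ldots,\varepsilon_n,0,\ldots,0)$, the same count $\sharp\Sigma_\beta^n\le\beta^{n+1}/(\beta-1)$ from Proposition~\ref{Renyi}, and the same convergent-series estimate for exponents $s$ above $1/(1+\liminf_n\phi(n)/n)$. The only cosmetic difference is your use of $\lceil\phi(n)\rceil$ in place of the paper's $\lfloor\phi(n)\rfloor$, both of which are valid since $\ell_n(x)$ is an integer.
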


\begin{proof}
The upper bound can be obtained by considering the natural covering system.
Notice that
\[
E_\phi = \left\{x \in [0,1): \ell_n(x) \geq \phi(n)\ i.o.\right\} = \bigcap_{N=1}\bigcup_{n=N}\left\{x \in [0,1): \ell_n(x) \geq \phi(n)\right\}
\]
and
\[
\left\{x \in [0,1): \ell_n(x) \geq \phi(n)\right\}  \subseteq \bigcup_{(\varepsilon_1,\cdots,\varepsilon_n) \in \Sigma_\beta^n} I(\varepsilon_1,\cdots,\varepsilon_n,\underbrace{0,\cdots,0}_{\lfloor\phi(n)\rfloor}),
\]
so, for any $N \geq 1$, we obtain that
\[
E_\phi \subseteq \bigcup_{n=N} \bigcup_{(\varepsilon_1,\cdots,\varepsilon_n) \in \Sigma_\beta^n} I(\varepsilon_1,\cdots,\varepsilon_n,\underbrace{0,\cdots,0}_{\lfloor\phi(n)\rfloor}).
\]
Let $s = 1/(1+\liminf\limits_{n \to \infty} \phi(n)/n)$. Then $0 \leq s \leq 1$. For any $\delta >0$ and $N \geq 1$, by the definition of Hausdorff measure, we have that
\begin{align}\label{measure}
\mathcal{H}^{s+\delta}(E_\phi) &\leq \sum_{n=N}^\infty \sum_{(\varepsilon_1,\cdots,\varepsilon_n) \in \Sigma_\beta^n} |I(\varepsilon_1,\cdots,\varepsilon_n,\underbrace{0,\cdots,0}_{\lfloor\phi(n)\rfloor})|^{s+\delta}\notag \\
&\leq\sum_{n=N}^\infty\frac{\beta^{n+1}}{\beta-1}\cdot\left(\frac{1}{\beta^{n+\lfloor\phi(n)\rfloor}}\right)^{s+\delta}
= \frac{\beta}{\beta-1} \sum_{n=N}^\infty \left(\frac{1}{\beta^{n}}\right)^{t_n},
\end{align}
where $t_n := (1+ \lfloor\phi(n)\rfloor/n)(s+\delta) -1$ and the second inequality follows from Proposition \ref{Renyi} and the fact $|I(\varepsilon_1,\cdots,\varepsilon_n)| \leq 1/\beta^{n}$ for any $(\varepsilon_1,\cdots,\varepsilon_n) \in \Sigma_{\beta}^n$.

When $s=0$, i.e., $\liminf\limits_{n \to \infty} \phi(n)/n = +\infty$. So, $\liminf\limits_{n \to \infty} \lfloor\phi(n)\rfloor/n \geq \delta^{-1}$. So we know that
\[
\liminf\limits_{n \to \infty} t_n = \liminf\limits_{n \to \infty} \left(1 + \frac{\lfloor\phi(n)\rfloor}{n}\right)\delta -1 \geq (1+\delta^{-1})\delta -1 = \delta.
\]

When $0<s \leq 1$, we deduce that
\[
\liminf\limits_{n \to \infty} t_n = \liminf\limits_{n \to \infty} \left(1 + \frac{\lfloor\phi(n)\rfloor}{n}\right)(s+\delta) -1 = (s+\delta)/s -1 = \delta/s \geq \delta.
\]
In conclusion, we have that $\liminf\limits_{n \to \infty} t_n  \geq \delta$. So, there exists $N_\delta>0$ such that $t_n \geq \delta/2$ for all $n \geq N_\delta$. Applying $N=N_\delta$ to (\ref{measure}), we obtain that
\[
\mathcal{H}^{s+\delta}(E_\phi) \leq \frac{\beta}{\beta-1} \sum_{n=N_\delta}^\infty \left(\frac{1}{\beta^{n}}\right)^{t_n} \leq \frac{\beta}{\beta-1} \sum_{n=N_\delta}^\infty \frac{1}{\beta^{n\delta/2}} < +\infty.
\]
Therefore, $\dim_{\rm H} E_\phi \leq s+\delta$. By the arbitrariness of $\delta >0$, we have $\dim_{\rm H} E_\phi \leq s$.
\end{proof}

Note that $F_\phi \subset E_{(1-\delta)\phi}$ for any $0< \delta<1$, by Lemma \ref{E}, we know that
\[
\dim_{\rm H} F_\phi \leq \dim_{\rm H} E_{(1-\delta)\phi} \leq \frac{1}{1+(1-\delta)\liminf\limits_{n \to \infty} \phi(n)/n}.
\]
Let $\delta \to 0^+$, we deduce that $$\dim_{\rm H} F_\phi \leq \frac{1}{1+\liminf\limits_{n \to \infty} \phi(n)/n}. $$

\subsection{Lower bound}
In this subsection, the main aim is to determine a lower bound for the Hausdorff dimension of the set $F_\phi$. First, we give a lower bound of the Hausdorff dimension of $F_\phi$ for $\beta \in A_0$, where $A_0$ is as defined in Section 2.1. Then go on to estimate the lower bound of the Hausdorff dimension of $F_\phi$ for any $\beta >1$ using the approximation method stated in Section \ref{Approximation method}.

The lower bound of the Hausdorff dimension of $F_\phi$ is yielded by constructing a Cantor-like subset of $F_\phi$. The mass distribution principle (see \cite[Proposition 4.2]{lesFal90}) is a classical tool to give a lower bound estimation for the Hausdorff dimension of a set. In the classical form of the mass distribution principle, we need to estimate the measure of an arbitrary ball, while the following modified mass distribution principle tells us that, for $\beta$-expansions, it is sufficient to consider only the measure of cylinders.

\begin{proposition}(\cite[Proposition 1.3]{lesB.W14})\label{MMDP}
Let $E$ be a Borel measurable set in $[0,1)$ and $\nu$ be a Borel measure with $\nu(E)>0$. Assume that there exist a positive constant $c$ and an integer $n_0$ such that
\[
\nu(I_n) \leq c|I_n|^s
\]
for any $n \geq n_0$ and any $n$-th order cylinder $I_n$. Then $\dim_{\rm H} E \geq s$.
\end{proposition}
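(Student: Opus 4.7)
My plan is to deduce the proposition from Falconer's classical Mass Distribution Principle \cite[Proposition 4.2]{lesFal90}, which asserts that any Borel measure $\nu$ on $E$ satisfying $\nu(U) \leq C|U|^s$ for every Borel $U$ of sufficiently small diameter forces $\mathcal{H}^s(E) \geq \nu(E)/C$, and in particular $\dim_{\rm H} E \geq s$ when $\nu(E) > 0$. The task is therefore to promote the cylinder hypothesis $\nu(I_n) \leq c|I_n|^s$ to an analogous bound on arbitrary small Borel sets, up to a universal multiplicative constant depending only on $\beta$ and $s$.

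Given a Borel $U \subset [0,1)$ of small diameter $|U|$, I would fix the unique $n \geq n_0$ with $\beta^{-(n+1)} \leq |U| < \beta^{-n}$ and let $\mathcal{J}(U)$ be the collection of $n$-th order cylinders meeting $U$. Since the level-$n$ cylinders form a partition of $[0,1)$ into left-closed right-open intervals, the members of $\mathcal{J}(U)$ are consecutive intervals whose union is an interval of length at most $|U| + 2\beta^{-n} \leq (1 + 2\beta)|U|$. The cylinder hypothesis then gives
\[
\nu(U) \;\leq\; \sum_{I \in \mathcal{J}(U)} \nu(I) \;\leq\; c \sum_{I \in \mathcal{J}(U)} |I|^s,
\]
so everything reduces to the combinatorial estimate $\sum_{I \in \mathcal{J}(U)} |I|^s \leq K|U|^s$ for some constant $K = K(\beta,s)$.

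The main obstacle is precisely this last estimate. Splitting $\mathcal{J}(U)$ into the at most two \emph{boundary} cylinders (whose lengths are bounded by $\beta^{-n} \leq \beta|U|$ and hence contribute at most $2\beta^s |U|^s$) and the \emph{interior} cylinders (which lie inside the convex hull of $U$ and whose diameters therefore sum to at most $|U|$) is the natural decomposition. When $\beta \in A_0$, Proposition \ref{A0} forces every level-$n$ cylinder to have length at least $C_0 \beta^{-n}$, so the number of interior cylinders is bounded by a constant depending only on $\beta$ and the estimate is immediate. For general $\beta > 1$, cylinders can be much smaller than $\beta^{-n}$, so the interior sum must be controlled more carefully: I would iterate the covering construction at finer scales, decomposing each oversized interior piece into its own boundary and interior cylinders at the next admissible level, and do the bookkeeping using the length bound $\sum_{\text{interior}}|I| \leq |U|$ together with the inequality $0 < s \leq 1$. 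Once this size control yields $\sum_{I \in \mathcal{J}(U)} |I|^s \leq K|U|^s$, Falconer's principle applies and gives $\mathcal{H}^s(E) \geq \nu(E)/(cK) > 0$, hence $\dim_{\rm H} E \geq s$.
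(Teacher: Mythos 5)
The paper does not prove this proposition at all --- it is quoted verbatim from Bugeaud and Wang \cite[Proposition 1.3]{lesB.W14} --- so your proposal must be judged against the argument in that reference, and it has a genuine gap exactly where you locate the ``main obstacle.'' For $0<s\leq 1$ the map $t\mapsto t^{s}$ is subadditive, so the length bound $\sum_{\text{interior}}|I|\leq |U|$ gives $\bigl(\sum |I|\bigr)^{s}\leq \sum |I|^{s}$, i.e.\ the inequality in the \emph{wrong} direction: if $U$ meets $N$ interior cylinders each of length $|U|/N$, then $\sum|I|^{s}=N^{1-s}|U|^{s}\to\infty$ with $N$. Since for general $\beta>1$ a level-$n$ cylinder can be arbitrarily short compared with $\beta^{-n}$ (the paper's remark after the definition of cylinders stresses this), nothing in your setup bounds $N$, and iterating the decomposition to finer levels does not help because the same phenomenon recurs at every scale. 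Indeed the clean Frostman bound $\nu(U)\leq K|U|^{s}$ you aim for is not a consequence of the cylinder hypothesis alone: a ball meeting $N$ consecutive tiny cylinders, each carrying measure $c|I|^{s}$, has $\nu(U)/|U|^{s}\asymp N^{1-s}$.

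The missing ingredient is the structural theorem of Bugeaud and Wang on the distribution of full cylinders: among any $n+1$ consecutive cylinders of order $n$ at least one is full, hence of length exactly $\beta^{-n}$. Since a ball $B$ of radius $r<\beta^{-n}$ contains all but the two boundary members of the cylinders it meets, and every block of $n+1$ consecutive interior cylinders contributes a full one of length $\beta^{-n}$ to a set of total length at most $2\beta^{-n}$, the number of cylinders met by $B$ is $O(n)=O(\log_{\beta}(1/r))$. This yields $\nu(B)\leq C\,\log(1/r)\,r^{s}$ rather than $Cr^{s}$; the logarithmic loss is harmless, since it gives $\nu(B)\leq C_{\epsilon}r^{s-\epsilon}$ for every $\epsilon>0$ and hence $\dim_{\rm H}E\geq s-\epsilon$, then $\dim_{\rm H}E\geq s$. (Your observation that the case $\beta\in A_{0}$ is immediate from Proposition \ref{A0} is correct, but the proposition is needed, and is used in this paper, for arbitrary $\beta$.) So the architecture of your reduction to Falconer's principle is fine, but the combinatorial estimate at its core requires the full-cylinder counting result and a relaxation of the target bound to one with a logarithmic factor.
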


\subsubsection{The cases of bases in $A_0$}

\begin{lemma}\label{F}
Let $\beta \in A_0$.
Assume that $\phi$ is a positive and nondecreasing function defined on $\mathbb{N}$ with $\phi(n) \to \infty$ as $n \to \infty$. Then
\begin{equation}\label{lowerF}
\dim_{\rm H} F_\phi \geq \frac{1}{1+\liminf\limits_{n \to \infty} \phi(n)/n}.
\end{equation}
\end{lemma}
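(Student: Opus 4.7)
Let $\eta = \liminf_{n \to \infty}\phi(n)/n$ and $s = 1/(1+\eta)$. The plan is to build, for each $\varepsilon > 0$, a Cantor-type subset $F^{\varepsilon} \subseteq F_\phi$ of Hausdorff dimension at least $s - c\varepsilon$ and then let $\varepsilon \to 0^+$. By the definition of liminf, I pick an increasing sequence $\{n_k\}$ with $\phi(n_k)/n_k \leq \eta + \varepsilon$ and make it grow so rapidly that $\sum_{j<k} m_j = o(n_k)$, where $m_k := n_k + \lfloor\phi(n_k)\rfloor$. I also fix an integer $T = T(\varepsilon)$ large enough that the subshift $\Sigma_\beta^{\leq T} \subseteq \Sigma_\beta$ of admissible sequences whose zero runs all have length at most $T$ has topological entropy $h_T \geq (1-\varepsilon)\log\beta$; this is possible because Proposition \ref{sup ln} gives $\ell_n(x) = O(\log n)$ for $\mu_\beta$-a.e.\ $x$, so $\mu_\beta$ is concentrated on $\bigcup_T \Sigma_\beta^{\leq T}$, and the variational principle together with $h(\mu_\beta) = \log\beta$ forces $h_T \to \log\beta$.

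Define $F^{\varepsilon}$ as the set of those $x \in [0,1)$ whose $\beta$-expansion satisfies, for every $k \geq 1$: the forced zeros $\varepsilon_{n_k+1}(x) = \cdots = \varepsilon_{m_k}(x) = 0$; the bracketing nonzeros $\varepsilon_{n_k}(x) = \varepsilon_{m_k+1}(x) = 1$; and the intervening ``free'' blocks of digits (those between a bracketing nonzero at $m_{k-1}+1$ and the next bracketing nonzero at $n_k$) are arbitrary admissible $\beta$-blocks whose zero runs all have length at most $T$. For any $x \in F^{\varepsilon}$, the bracketing digits force $\ell_{n_k}(x) = \lfloor\phi(n_k)\rfloor$ exactly, so $\ell_{n_k}(x)/\phi(n_k) \to 1$; meanwhile, for $n$ lying inside a free block, $\ell_n(x) \leq T$ by construction, so $\ell_n(x)/\phi(n) \leq T/\phi(n) \to 0$ since $\phi \to \infty$. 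Hence $\limsup_{n\to\infty} \ell_n(x)/\phi(n) = 1$, and therefore $F^{\varepsilon} \subseteq F_\phi$.

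To obtain the dimension lower bound, endow $F^{\varepsilon}$ with the natural product probability measure $\mu$ that assigns equal weight to each admissible configuration of the free blocks at every level. Using Proposition \ref{Renyi} together with the entropy estimate $h_T \geq (1-\varepsilon)\log\beta$ to bound the number of admissible free blocks, and using Proposition \ref{A0} to bound cylinder lengths from below by $C_0\beta^{-n}$ (since $\beta \in A_0$), and exploiting that $\sum_{j<k}\phi(n_j) = o(n_k)$, a routine estimate produces $\mu(I_n) \leq C |I_n|^{s - c\varepsilon}$ for every cylinder $I_n$ with constants $C,c$ independent of $n$. Proposition \ref{MMDP} then gives $\dim_H F^{\varepsilon} \geq s - c\varepsilon$, and letting $\varepsilon \to 0^+$ yields (\ref{lowerF}).

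The main technical obstacle is the uniform cylinder estimate $\mu(I_n) \leq C|I_n|^{s - c\varepsilon}$: one must split into cases according to whether $n$ falls inside a forced zero block, at a bracketing position, or inside a free block, and in each case verify that the entropy $h_T$ coming from the free blocks beats the geometric compression $\log\beta$ from cylinder lengths to produce an exponent close to $s = 1/(1+\eta)$. A secondary subtlety is justifying $h_T \to \log\beta$, which is a measure-theoretic step relying on the variational principle and Proposition \ref{sup ln}, separate from the rest of the geometric construction.
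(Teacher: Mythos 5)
Your overall architecture (a Cantor-like subset realizing the liminf along a sparse subsequence, a natural measure on it, and Proposition \ref{MMDP} combined with the two-sided cylinder bounds of Proposition \ref{A0}) matches the paper's, but the device you use to keep $\ell_n(x)$ small inside the free blocks rests on a false justification. You claim that the subshift of admissible sequences whose zero runs all have length at most $T$ has topological entropy $h_T\to\log\beta$ because ``$\mu_\beta$ is concentrated on $\bigcup_T\Sigma_\beta^{\leq T}$,'' citing Proposition \ref{sup ln}. This is not so: Lemma \ref{sup ln xia} gives $r_n(x)/\log_\beta n\to 1$ for $\lambda$-a.e.\ (equivalently $\mu_\beta$-a.e.) $x$, so almost every point has \emph{unbounded} zero runs, $\mu_\beta(\Sigma_\beta^{\leq T})=0$ for every $T$, and the union over $T$ is $\mu_\beta$-null; $\ell_n(x)=O(\log n)$ does not mean bounded. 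The statement $h_T\to\log\beta$ is in fact true for $\beta\in A_0$, but it requires a genuine argument --- for instance a concatenation/counting argument via Lemma \ref{full}, using that words of the form $w0^m$ label full cylinders and can be freely concatenated, and then estimating how many admissible $w$ of a fixed length avoid long zero runs. As written, the key quantitative input of your construction is unsupported, and without it the measure you build could charge too few cylinders to reach the exponent $s-c\varepsilon$.

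It is worth noting that the paper sidesteps this issue entirely: it places no restriction on the free digits, but instead inserts a forced nonzero digit after every block of length $\lfloor\phi(n_i)\rfloor$ throughout the gap between $n_i+\lfloor\phi(n_i)\rfloor$ and $n_{i+1}$ (preceded by $m=M+1$ forced zeros so that the relevant cylinders are full by Lemma \ref{full}). Any zero run in the gap then has length at most $\lfloor\phi(n_i)\rfloor\leq\phi(n)$ by the monotonicity of $\phi$ --- precisely the hypothesis you are handed --- so no entropy is sacrificed and no entropy estimate is needed. Separately, describing the uniform bound $\mu(I_n)\leq C|I_n|^{s-c\varepsilon}$ as ``a routine estimate'' undersells the difficulty: in the paper this verification, with its case analysis on the position of $n$ relative to the forced blocks and the sparsity condition (\ref{tiaojian}) on $\{n_i\}$, constitutes the bulk of the proof, and your sketch leaves it entirely to the reader.
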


The following is devoted to proving this lemma in this subsection.

Let $0 \leq s = 1/(1+\liminf\limits_{n \to \infty} \phi(n)/n)\leq 1$. When $s =0$, (\ref{lowerF}) holds trivially.
Now let $0 < s \leq 1$. Denote $M= \sup_{n \geq 1}{\ell_n(1,\beta)}$ and $m = M+1$, then we have that $0 < M, m< +\infty$ since $\beta \in A_0$. Let $\{n_i\}_{i \geq 1}$ be a sequence with
\begin{equation}\label{zilie}
\lim_{i \to \infty} \frac{\phi(n_i)}{n_i} = \liminf_{n \to \infty} \frac{\phi(n)}{n}.
\end{equation}
For any $0< \delta < s$, we choose a largely sparse subsequence $\{n_{i_j}\}_{j \geq 1}$ of $\{n_i\}_{i \geq 1}$ (for simplicity, we still denote by $\{n_i\}_{i \geq 1}$) such that
\begin{align*}
&\frac{1}{\lfloor\phi(n_1)\rfloor} <\frac{\delta}{4(m+1-\log_{\beta}(\beta-1))},\ \, n_1 >1, \ \
n_i > n_{i-1} + 1+ \lfloor\phi(n_{i-1})\rfloor
\end{align*}
and
\begin{align}\label{tiaojian}
\frac{\delta}{2}&\big(n_i+ \lfloor\phi(n_i)\rfloor\big) \geq \sum_{j=1}^{i-1} \big(\lfloor\phi(n_j)\rfloor+r_j+1\big) + \big(m+1- \log_{\beta}(\beta-1)\big)\sum_{j=1}^{i-1}(k_j +1),
\end{align}
where $k_j = \left\lfloor\frac{n_{j+1} - n_j -1}{[\phi(n_j)]}\right\rfloor \geq 1$ is an integer and $r_j = n_{j+1} -n_j -1 -k_j\lfloor\phi(n_j)\rfloor$ is the remainder. We sometimes write the right-hand side formula of (\ref{tiaojian}) as $U_i$ (with the convention $U_1 :=0$).
The first condition $\lfloor\phi(n_1)\rfloor^{-1} < \frac{\delta}{4(m+1-\log_{\beta}(\beta-1))}$ assures that the sequence satisfying (\ref{tiaojian}) can be chosen. Moreover, it also guarantees that
\[
\frac{1}{\lfloor\phi(n_i)\rfloor} < \frac{1+\delta -s}{m+1-\log_{\beta}(\beta-1)}
\]
for $0< \delta < s$. That is, $W_i := \lfloor\phi(n_i)\rfloor(s-\delta-1) + m+1-\log_{\beta}(\beta-1) <0$. The following proof will be divided into three steps to make use of Proposition \ref{MMDP}.

{\bf Step 1 Construction of a Cantor-like subset.} We denote the two subsets of integers
\[
\mathcal{I}_1 =\{n_i + j: i\geq 1,1\leq j\leq \lfloor\phi(n_i)\rfloor\} \cup \{n_i + j\lfloor\phi(n_i)\rfloor-m +r: i\geq 1, 2\leq j \leq k_i, 1\leq r \leq m \}
\cup\Gamma
\]
and
\[
\mathcal{I}_2 = \{ n_i, n_i + j\lfloor\phi(n_i)\rfloor +1: i\geq 1, 1\leq j \leq k_i -1\} \backslash \{n_1\} \cup \Lambda,
\]
where
$\Gamma = \cup_{i \geq 1} \Gamma_i$, $\Lambda = \cup_{i \geq 1} \Lambda_i$ and when $r_i \leq m$, $\Gamma_i = \{ n_i + k_i\lfloor\phi(n_i)\rfloor+1, \cdots, n_{i+1}-1\}$ and $\Lambda_i$ is empty; if $r_i > m$, $\Gamma_i=
\{ n_{i+1}-m, \cdots, n_{i+1}-1\}$ and $\Lambda_i = \{n_i + k_i\lfloor\phi(n_i)\rfloor +1\}$.
Let $\mathcal{D}_n$ be the $n$-th order cylinders $I(\varepsilon_1, \cdots, \varepsilon_n)$ satisfying that $\varepsilon_k = 0$ if $k \in I_1$, $\varepsilon_k \neq 0$ if $k \in I_2$, and $\varepsilon_k \in \mathcal{A}$ if $k \notin \mathcal{I}_1 \cup \mathcal{I}_2$. Put
\[
F = \bigcap_{n=1}^\infty \bigcup_{I(\varepsilon_1, \cdots, \varepsilon_n) \in \mathcal{D}_n}I(\varepsilon_1, \cdots, \varepsilon_n).
\]
By the constructions of $\mathcal{D}_n$ and $F$, and the monotonic property of $\phi$, we claim that
\[
F \subset F_\phi = \left\{x \in [0,1): \limsup_{n \to \infty} \frac{\ell_n(x)}{\phi(n)} = 1 \right\}.
\]
In fact, for any $n \geq 1$, there exists $k \in \mathbb{N}$ such that $n_k \leq n < n_{k+1}$. For any $x \in F$, by the construction of F and the monotonic property of $\phi$, we know that
\[
\ell_n(x) = \max\{\lfloor\phi(n_k)\rfloor\ , \lfloor\phi(n_{k-1})\rfloor\} \leq \lfloor\phi(n_k)\rfloor\ \leq \phi(n)
\]
and hence that
\[
\limsup_{n \to \infty} \frac{\ell_n(x)}{\phi(n)}  \leq 1.
\]
Note that $\ell_{n_k}(x) = \lfloor\phi_{n_k}\rfloor$, so $\lim\limits_{k \to \infty} \ell_{n_k}(x)/\phi(n_k) = 1$. Therefore, $\limsup\limits_{n \to \infty} \ell_n(x)/\phi(n) =1$.

{\bf Step 2 Supporting measure.} We now distribute a probability measure $\nu$ supported on $F$. We first give the definition of $\nu$ on cylinders. Notice that $n_1 >1$, then $1 \notin \mathcal{I}_1 \cup \mathcal{I}_2$. For any $I(\varepsilon_1) \in \mathcal{D}_1$, we define $\nu(I(\varepsilon_1)) = |I(\varepsilon_1)|$ and $\nu(I(\varepsilon_1)) = 0$ if $I(\varepsilon_1) \notin \mathcal{D}_1$. Suppose that $\nu(I(\varepsilon_1, \cdots, \varepsilon_n))$ is well defined for any $I(\varepsilon_1, \cdots, \varepsilon_n)) \in \mathcal{D}_n$, we define $\nu(I(\varepsilon_1, \cdots, \varepsilon_n,\varepsilon_{n+1}))$ as follows:
\[
\nu(I(\varepsilon_1, \cdots, \varepsilon_n,\varepsilon_{n+1})) =
\begin{cases}
\nu(I(\varepsilon_1, \cdots, \varepsilon_n)) &\text{if $n+1 \in \mathcal{I}_1$}; \\
\frac{|I(\varepsilon_1, \cdots, \varepsilon_n,\varepsilon_{n+1})|}{|I(\varepsilon_1, \cdots, \varepsilon_n)\backslash I(\varepsilon_1, \cdots, \varepsilon_n,0)|} \cdot
\nu(I(\varepsilon_1, \cdots, \varepsilon_n)) &\text{if $n+1 \in \mathcal{I}_2$}; \\
\frac{|I(\varepsilon_1, \cdots, \varepsilon_n,\varepsilon_{n+1})|}{|I(\varepsilon_1, \cdots, \varepsilon_n)|} \cdot \nu(I(\varepsilon_1, \cdots, \varepsilon_n)) &\text{if $n+1 \notin \mathcal{I}_1 \cup \mathcal{I}_2$},
\end{cases}
\]
and $\nu(I(\varepsilon_1, \cdots, \varepsilon_n,\varepsilon_{n+1})) =0$ if $I(\varepsilon_1, \cdots, \varepsilon_n,\varepsilon_{n+1}) \notin \mathcal{D}_{n+1}$. Thus, the measure $\nu$ is well-defined on all cylinders since we can verify that
\[
\sum_{(\varepsilon_1,\cdots,\varepsilon_{n}) \in \Sigma^n_\beta} \nu(I(\varepsilon_1, \cdots, \varepsilon_n)) =1
\]
and
\[
\sum_{(\varepsilon_1,\cdots,\varepsilon_n, \varepsilon_{n+1}) \in \Sigma^{n+1}_\beta} \nu(I(\varepsilon_1, \cdots, \varepsilon_n,\varepsilon_{n+1})) =  \nu(I(\varepsilon_1, \cdots, \varepsilon_n)).
\]
Notice that the set of all cylinders forms a semi-algebra, by Kolmogorov's extension theorem,  $\nu$ can be extensively defined on the measurable space $([0,1), \mathcal{B})$. Now we list some facts about the expression for the measure of a cylinder.

(I) When $ n_i <n \leq n_i + \lfloor\phi(n_i)\rfloor~(i \geq 1)$, we have
\[
\nu(I(\varepsilon_1,\cdots,\varepsilon_{n})) = \nu(I(\varepsilon_1,\cdots,\varepsilon_{n_i})).
\]

(II) When $ n_i + j\lfloor\phi(n_i)\rfloor < n \leq n_i + j\lfloor\phi(n_i)\rfloor + \lfloor\phi(n_i)\rfloor -m$ ($i \geq 1$ and $1\leq j \leq k_i -1$), we deduce that
\[
\nu(I(\varepsilon_1,\cdots,\varepsilon_{n})) = \frac{|I(\varepsilon_1,\cdots,\varepsilon_{n})|}
{\beta^{-(n_i+j\lfloor\phi(n_i)\rfloor)}-\beta^{-(n_i+j\lfloor\phi(n_i)\rfloor+1)}}
\cdot \nu(I(\varepsilon_1,\cdots,\varepsilon_{n_i + (j-1)\lfloor\phi(n_i)\rfloor + \lfloor\phi(n_i)\rfloor -m})).
\]
In fact, the construction of $\mathcal{D}_n$ and the distribution of the measure $\nu$ yield that
\begin{equation}\label{ditui}
\nu(I(\varepsilon_1,\cdots,\varepsilon_{n})) = \frac{|I(\varepsilon_1,\cdots,\varepsilon_{n})|}{|I(\varepsilon_1,\cdots,\varepsilon_{n-1})|}\cdot
\nu(I(\varepsilon_1,\cdots,\varepsilon_{n-1})).
\end{equation}
Note that (\ref{ditui}) is also true if we replac $n$ by $(n-1)$, therefore, we obtain
\[
\nu(I(\varepsilon_1,\cdots,\varepsilon_{n}))=
\frac{|I(\varepsilon_1,\cdots,\varepsilon_{n})|}{|I(\varepsilon_1,\cdots,\varepsilon_{n-1})|}\cdot
\frac{|I(\varepsilon_1,\cdots,\varepsilon_{n-1})|}{|I(\varepsilon_1,\cdots,\varepsilon_{n-2})|}\cdot
\nu(I(\varepsilon_1,\cdots,\varepsilon_{n-2})).
\]
Repeating the above procedure, we finally have
\begin{equation}\label{ditui2}
\nu(I(\varepsilon_1,\cdots,\varepsilon_{n}))= \frac{|I(\varepsilon_1,\cdots,\varepsilon_{n})|}{|I(\varepsilon_1,\cdots,\varepsilon_{n_i + j[\phi(n_i)] +1})|}\cdot \nu (I(\varepsilon_1,\cdots,\varepsilon_{n_i + j\lfloor\phi(n_i)\rfloor +1})).
\end{equation}
It follows from $\varepsilon_{n_i + j\lfloor\phi(n_i)\rfloor +1} \neq 0$ and the distribution of the measure $\nu$ that
\begin{align}\label{ditui3}
\nu (I(\varepsilon_1,\cdots,\varepsilon_{n_i + j\lfloor\phi(n_i)\rfloor +1}))=& \frac{|I(\varepsilon_1,\cdots,\varepsilon_{n_i + j\lfloor\phi(n_i)\rfloor +1})|}{|I(\varepsilon_1,\cdots,\varepsilon_{n_i + j\lfloor\phi(n_i)\rfloor})\backslash I(\varepsilon_1,\cdots,\varepsilon_{n_i + j\lfloor\phi(n_i)\rfloor}, 0)|} \notag \\
&\times\nu (I(\varepsilon_1,\cdots,\varepsilon_{n_i + j\lfloor\phi(n_i)\rfloor}))
\end{align}
Since $\varepsilon_{n_i +(j-1)\lfloor\phi(n_i)\rfloor+ \lfloor\phi(n_i)\rfloor -m +1} = \cdots = \varepsilon_{n_i + j\lfloor\phi(n_i)\rfloor}=0$, by the construction of $\mathcal{D}_n$ and the distribution of the measure $\nu$, we obtain that
\begin{align}\label{ditui4}
\nu (I(\varepsilon_1,\cdots,\varepsilon_{n_i + j\lfloor\phi(n_i)\rfloor})) = \nu(I(\varepsilon_1,\cdots,\varepsilon_{n_i + (j-1)\lfloor\phi(n_i)\rfloor + \lfloor\phi(n_i)\rfloor -m})).
\end{align}
In view of Lemma \ref{full} and the construction of $\mathcal{D}_n$, we know that the two cylinders $I(\varepsilon_1,\cdots,\varepsilon_{n_i + j\lfloor\phi(n_i)\rfloor})$ and $I(\varepsilon_1,\cdots,\varepsilon_{n_i + j\lfloor\phi(n_i)\rfloor}, 0)$ are both full. Hence
\[
|I(\varepsilon_1,\cdots,\varepsilon_{n_i + j\lfloor\phi(n_i)\rfloor})\backslash I(\varepsilon_1,\cdots,\varepsilon_{n_i + j\lfloor\phi(n_i)\rfloor}, 0)| = \beta^{-(n_i+j\lfloor\phi(n_i)\rfloor)}-\beta^{-(n_i+j\lfloor\phi(n_i)\rfloor+1)}.
\]
Combing this with (\ref{ditui2}), (\ref{ditui3}) and (\ref{ditui4}), we get the desired result.

(III) When $ n_i + j\lfloor\phi(n_i)\rfloor + \lfloor\phi(n_i)\rfloor -m < n \leq n_i + (j+1)\lfloor\phi(n_i)\rfloor$ ($i \geq 1$ and $1\leq j \leq k_i -1$), it is easy to check
\[
\nu(I(\varepsilon_1,\cdots,\varepsilon_{n})) = \nu(I(\varepsilon_1,\cdots,\varepsilon_{n_i + j\lfloor\phi(n_i)\rfloor + \lfloor\phi(n_i)\rfloor -m})).
\]

(IV) When $n_i + k_i\lfloor\phi(n_i)\rfloor < n \leq n_{i+1}~(i \geq 1)$. In this case, we should distinguish two cases according to the relationship between the remainder $r_i$ and $m$.\\
{\bf(i)} If $r_i \leq m$, by the construction of $\mathcal{D}_n$ and the distribution of the measure $\nu$, we know that
\[
\nu(I(\varepsilon_1,\cdots,\varepsilon_{n}))  = \nu(I(\varepsilon_1,\cdots,\varepsilon_{n_i + (k_i-1)\lfloor\phi(n_i)\rfloor + \lfloor\phi(n_i)\rfloor -m})).
\]
{\bf(ii)} If $r_i > m$, we need to distinguish two cases according to the position of $n$.

(1) For $n_i + k_i\lfloor\phi(n_i)\rfloor < n \leq n_{i+1} -m -1$, being similar to (II), we obtain
\begin{align*}
\nu(I(\varepsilon_1,\cdots,\varepsilon_{n}))=& \frac{|I(\varepsilon_1,\cdots,\varepsilon_{n})|}
{\beta^{-(n_i+k_i\lfloor\phi(n_i)\rfloor)}-\beta^{-(n_i+k_i\lfloor\phi(n_i)\rfloor+1)}} \cdot
\nu(I(\varepsilon_1,\cdots,\varepsilon_{n_i + (k_i-1)\lfloor\phi(n_i)\rfloor + \lfloor\phi(n_i)\rfloor -m})).
\end{align*}

(2) For $n_{i+1}  -m -1 < n \leq n_{i+1}$, we get that
\[
\nu(I(\varepsilon_1,\cdots,\varepsilon_{n})) = \nu(I(\varepsilon_1,\cdots,\varepsilon_{n_{i+1}-m-1})).
\]

{\bf Step 3 Estimation on the $\nu$-measure of cylinders.} We claim that
\[
\nu(I(\varepsilon_1,\cdots,\varepsilon_{n_2})) \leq \frac{\beta^{(m+1-\log_\beta(\beta-1))(k_1+1)+ \lfloor\phi(n_1)\rfloor+r_1+1}}{\beta^{n_2 -n_1}}\cdot \nu(I(\varepsilon_1,\cdots,\varepsilon_{n_1})).
\]
We distinguish two cases to prove this statement according to the relation between $r_1$ and $m$.\\
$\bullet$ $r_1 \leq m$ \\
By the construction of $\mathcal{D}_n$, we know $\varepsilon_{n_1 +k_1\lfloor\phi(n_1)\rfloor-m +1} = \cdots = \varepsilon_{n_2 -1}=0$ and $\varepsilon_{n_2} \neq 0$ .
Since $|I(\varepsilon_1,\cdots,\varepsilon_{n_2})| \leq \beta^{-n_2}$,
being similar to the Case (II) in Step 2, we obtain
\begin{align}\label{estimate1}
\nu(I(\varepsilon_1,\cdots,\varepsilon_{n_2})) & = \frac{|I(\varepsilon_1,\cdots,\varepsilon_{n_2})|}{\beta^{n_2-1}-\beta^{n_2}}\cdot
\nu(I(\varepsilon_1,\cdots,\varepsilon_{n_1 + (k_1-1)\lfloor\phi(n_1)\rfloor + \lfloor\phi(n_1)\rfloor -m}))\notag \\
&\leq \frac{1}{\beta -1}\cdot \nu(I(\varepsilon_1,\cdots,\varepsilon_{n_1 + (k_1-1)\lfloor\phi(n_1)\rfloor + \lfloor\phi(n_1)\rfloor -m})).
\end{align}
Now it remains to estimate the measure $\nu(I(\varepsilon_1,\cdots,\varepsilon_{n_1 + (k_1-1)\lfloor\phi(n_1)\rfloor + \lfloor\phi(n_1)\rfloor -m}))$. Being similar to the Case (II) in Step 2, we deduce that
\begin{align}\label{estimate2}
&\nu(I(\varepsilon_1,\cdots,\varepsilon_{n_1 + (k_1-1)\lfloor\phi(n_1)\rfloor + \lfloor\phi(n_1)\rfloor -m}))\notag \\
=& \frac{|I(\varepsilon_1,\cdots,\varepsilon_{n_1 + (k_1-1)\lfloor\phi(n_1)\rfloor + \lfloor\phi(n_1)\rfloor -m})|}{\beta^{-(n_1 + (k_1-1)\lfloor\phi(n_1)\rfloor)}-\beta^{-(n_1 + (k_1-1)\lfloor\phi(n_1)\rfloor+1)}}\cdot \nu(I(\varepsilon_1,\cdots,\varepsilon_{n_1 + (k_1-2)\lfloor\phi(n_1)\rfloor + \lfloor\phi(n_1)\rfloor -m}))\notag \\
\leq & \frac{1}{\beta -1} \cdot \frac{1}{\beta^{\lfloor\phi(n_1)\rfloor-m-1}} \cdot \nu(I(\varepsilon_1,\cdots,\varepsilon_{n_1 + (k_1-2)\lfloor\phi(n_1)\rfloor + \lfloor\phi(n_1)\rfloor -m}))
\end{align}
since $|I(\varepsilon_1,\cdots,\varepsilon_{n_1 + (k_1-1)\lfloor\phi(n_1)\rfloor + \lfloor\phi(n_1)\rfloor -m})| \leq \beta^{-(n_1 + (k_1-1)\lfloor\phi(n_1)\rfloor + \lfloor\phi(n_1)\rfloor -m)}$.
Repeating the above procedure, we have that
\begin{equation}\label{diedai}
\nu(I(\varepsilon_1,\cdots,\varepsilon_{n_1 + (k_1-1)\lfloor\phi(n_1)\rfloor + \lfloor\phi(n_1)\rfloor -m})) \leq \left(\frac{1}{\beta -1} \cdot \frac{1}{\beta^{\lfloor\phi(n_1)\rfloor-m-1}}\right)^{k_1-1}\cdot \nu(I(\varepsilon_1,\cdots,\varepsilon_{n_1})).
\end{equation}
Combining this with (\ref{estimate1}) and (\ref{estimate2}), we obtain that
\begin{align*}
\nu(I(\varepsilon_1,\cdots,\varepsilon_{n_2})) &\leq \frac{1}{\beta -1} \cdot \left(\frac{1}{\beta -1} \cdot \frac{1}{\beta^{\lfloor\phi(n_1)\rfloor-m-1}}\right)^{k_1-1}\cdot \nu(I(\varepsilon_1,\cdots,\varepsilon_{n_1}))\\
& \leq   \frac{\beta^{k_1(m+1-\log_\beta(\beta-1))+\lfloor\phi(n_1)\rfloor}}{\beta^{k_1\lfloor\phi(n_1)\rfloor}}\cdot \nu(I(\varepsilon_1,\cdots,\varepsilon_{n_1})).
\end{align*}
Note that $m+1-\log_\beta(\beta-1) >0$ and $k_1\lfloor\phi(n_1)\rfloor = n_2-n_1-r_1-1$, we have
\[
\nu(I(\varepsilon_1,\cdots,\varepsilon_{n_2})) \leq \frac{\beta^{(m+1-\log_\beta(\beta-1))(k_1+1)+ \lfloor\phi(n_1)\rfloor+r_1+1}}{\beta^{n_2 -n_1}}\cdot \nu(I(\varepsilon_1,\cdots,\varepsilon_{n_1})).
\]
$\bullet$ $r_1 >m$.\\
By the construction of $\mathcal{D}_n$, we know $\varepsilon_{n_2} \neq 0$  and $\varepsilon_{n_2 -m } = \cdots = \varepsilon_{n_2 -1}=0$. So,
\begin{align}\label{estimate3}
\nu(I(\varepsilon_1,\cdots,\varepsilon_{n_2}))  =\frac{|I(\varepsilon_1,\cdots,\varepsilon_{n_2})|}{\beta^{n_2-1}-\beta^{n_2}} \cdot
\nu(I(\varepsilon_1,\cdots,\varepsilon_{n_2 -m-1})).
\end{align}
Notice that $\varepsilon_{n_2 -m-1} \neq 0$, by the distribution of the measure $\nu$, we deduce that
\begin{align}\label{estimate4}
\nu(I(\varepsilon_1,\cdots,\varepsilon_{n_2 -m-1})) =& \frac{|I(\varepsilon_1,\cdots,\varepsilon_{n_2 -m-1})|}
{\beta^{-(n_1+k_1\lfloor\phi(n_1)\rfloor)}- \beta^{-(n_1+k_1\lfloor\phi(n_1)\rfloor+1)}} \notag\\
& \times \nu(I(\varepsilon_1,\cdots,\varepsilon_{n_1 + (k_1-1)\lfloor\phi(n_1)\rfloor + \lfloor\phi(n_1)\rfloor -m})).
\end{align}
It follows form (\ref{estimate3}) and (\ref{estimate4}) that
\[
\nu(I(\varepsilon_1,\cdots,\varepsilon_{n_2})) \leq \left(\frac{1}{\beta-1}\right)^2 \cdot \nu(I(\varepsilon_1,\cdots,\varepsilon_{n_1 + (k_1-1)\lfloor\phi(n_1)\rfloor + \lfloor\phi(n_1)\rfloor -m})).
\]
Combining this with (\ref{diedai}), we deduce that
\[
\nu(I(\varepsilon_1,\cdots,\varepsilon_{n_2})) \leq \left(\frac{1}{\beta-1}\right)^2 \cdot
\left(\frac{1}{\beta -1} \cdot \frac{1}{\beta^{\lfloor\phi(n_1)\rfloor-m-1}}\right)^{k_1-1}\cdot \nu(I(\varepsilon_1,\cdots,\varepsilon_{n_1})).
\]
Since $m+1-\log_\beta(\beta-1) >0$ and $k_1\lfloor\phi(n_1)\rfloor = n_2-n_1-r_1-1$, we obtain the desired result by following the calculations at the end of the case of $r_1  \leq m$.

As mentioned above, no matter $r_1 \leq m$ and $r_1 > m$, we always have
\[
\nu(I(\varepsilon_1,\cdots,\varepsilon_{n_2})) \leq \frac{\beta^{(m+1-\log_\beta(\beta-1))(k_1+1)+ \lfloor\phi(n_1)\rfloor+r_1+1}}{\beta^{n_2 -n_1}}\cdot \nu(I(\varepsilon_1,\cdots,\varepsilon_{n_1})).
\]

More generally, we have the following lemma whose proof is similar to the above arguments.

\begin{lemma}\label{DIE}
For any $i \geq 1$,
\[
\nu(I(\varepsilon_1,\cdots,\varepsilon_{n_i})) \leq \beta^{U_i - n_i},
\]
where $U_1 :=0$ and
\[
U_i =  \big(m+1- \log_{\beta}(\beta-1)\big)\sum_{j=1}^{i-1}(k_j +1)+ \sum_{j=1}^{i-1}\big(\lfloor\phi(n_j)\rfloor+r_j+1\big)\ \text{for any}\ i \geq 2.
\]
\end{lemma}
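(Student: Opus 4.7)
My plan is to prove Lemma \ref{DIE} by induction on $i$, using the Step~3 computation already carried out between $n_1$ and $n_2$ as the engine of the inductive step.

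For the base case $i=1$, I would argue that $\nu(I(\varepsilon_1,\ldots,\varepsilon_{n_1})) = |I(\varepsilon_1,\ldots,\varepsilon_{n_1})| \leq \beta^{-n_1}$. Indeed, by the definition of $\mathcal{I}_1$ and $\mathcal{I}_2$ in Step~1, the smallest index of $\mathcal{I}_1$ is $n_1+1$, and $n_1$ itself is explicitly removed from $\mathcal{I}_2$; hence for every $1 \leq k \leq n_1$, we have $k \notin \mathcal{I}_1 \cup \mathcal{I}_2$. The third (free) branch of the recursive definition of $\nu$ in Step~2 then telescopes, yielding $\nu(I(\varepsilon_1,\ldots,\varepsilon_{n_1})) = |I(\varepsilon_1,\ldots,\varepsilon_{n_1})|$, and the standard cylinder length bound gives the desired $\beta^{-n_1} = \beta^{U_1-n_1}$.

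For the inductive step, assume $\nu(I(\varepsilon_1,\ldots,\varepsilon_{n_i})) \leq \beta^{U_i - n_i}$. Between levels $n_i$ and $n_{i+1}$, the zero/nonzero digit pattern prescribed by $\mathcal{I}_1$ and $\mathcal{I}_2$ is structurally the same as the pattern between $n_1$ and $n_2$: a block of $\lfloor\phi(n_i)\rfloor$ forced zeros, followed by $k_i$ repeated segments of the same type (each concluded by a forced nonzero digit, with the interior zeros of the forced full cylinders handled exactly as in Case~(II) of Step~2), and finally a remainder of length $r_i$ treated according to the split $r_i \leq m$ vs.\ $r_i > m$. Repeating verbatim the argument in Step~3 (the inequalities producing the factor $\frac{1}{\beta-1}$ at each forced nonzero digit and the factor $\beta^{-(\lfloor\phi(n_i)\rfloor - m - 1)}$ across each intermediate full-cylinder segment), together with the iterated estimate analogous to (\ref{diedai}), gives
\[
\nu(I(\varepsilon_1,\ldots,\varepsilon_{n_{i+1}})) \leq \frac{\beta^{(m+1-\log_\beta(\beta-1))(k_i+1) + \lfloor\phi(n_i)\rfloor + r_i + 1}}{\beta^{n_{i+1}-n_i}}\cdot \nu(I(\varepsilon_1,\ldots,\varepsilon_{n_i})).
\]
Feeding in the induction hypothesis and using the telescoping identity $U_{i+1} - U_i = (m+1-\log_\beta(\beta-1))(k_i+1) + \lfloor\phi(n_i)\rfloor + r_i + 1$ closes the induction.

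The main obstacle is not conceptual but bookkeeping: one must cleanly separate the two subcases $r_i \leq m$ and $r_i > m$ (as in Step~3) and track correctly how many times the factor $(\beta-1)^{-1}$ accumulates (namely $k_i+1$ forced-nonzero positions), so that the exponent matches $(m+1-\log_\beta(\beta-1))(k_i+1)$ precisely. Once this is in place, the remaining algebra is a direct verification that the exponents collected in the numerator equal $U_{i+1} - U_i$, so the induction propagates without any loss.
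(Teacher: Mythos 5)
Your proposal is correct and follows essentially the same route as the paper: the paper proves the one-step estimate $\nu(I(\varepsilon_1,\cdots,\varepsilon_{n_2})) \leq \beta^{(m+1-\log_\beta(\beta-1))(k_1+1)+\lfloor\phi(n_1)\rfloor+r_1+1}\beta^{-(n_2-n_1)}\nu(I(\varepsilon_1,\cdots,\varepsilon_{n_1}))$ in Step~3 and then asserts the general lemma "is similar to the above arguments," which is precisely the iteration you carry out. Your version merely makes the induction explicit, including the correct base case $\nu(I(\varepsilon_1,\cdots,\varepsilon_{n_1}))=|I(\varepsilon_1,\cdots,\varepsilon_{n_1})|\leq\beta^{-n_1}$ and the telescoping identity for $U_{i+1}-U_i$.
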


By Lemma \ref{DIE}, a simple calculation implies the following result.

\begin{lemma}\label{DIEDAI}
For any $i \geq 1$ and $1\leq j \leq k_i-1$,
\[
\nu(I(\varepsilon_1,\cdots,\varepsilon_{n_i + (j-1)\lfloor\phi(n_i)\rfloor + \lfloor\phi(n_i)\rfloor -m})) \leq \frac{\beta^{(m+1-\log_\beta(\beta-1))(j-1)+U_i}}{\beta^{n_i+(j-1)\lfloor\phi(n_i)\rfloor}},
\]
where $U_i$ is as defined in Lemma \ref{DIE}.
\end{lemma}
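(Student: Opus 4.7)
The plan is to iterate the argument already carried out in Step 3, but within a single block indexed by $i$ instead of between two consecutive blocks. The starting point is Lemma \ref{DIE}, which bounds the measure $\nu(I(\varepsilon_1,\dots,\varepsilon_{n_i}))$ at the left endpoint of block $i$ by $\beta^{U_i-n_i}$. From there I advance through $(j-1)$ complete inner cycles of block $i$, each of length $\lfloor\phi(n_i)\rfloor$, and show that each cycle multiplies the measure by a factor of at most $\beta^{m+1-\log_\beta(\beta-1)-\lfloor\phi(n_i)\rfloor}$.

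In detail, one cycle of length $\lfloor\phi(n_i)\rfloor$ starting from an index of the form $n_i+(\ell-1)\lfloor\phi(n_i)\rfloor+\lfloor\phi(n_i)\rfloor-m$ is treated as follows. Case (I) of Step 2 shows that the measure is unchanged as we pass through the prescribed zeros in $\mathcal{I}_1$. At the next index, which lies in $\mathcal{I}_2$ and therefore carries a nonzero digit, the definition of $\nu$ together with Lemma \ref{full} (both $I(\cdots,\varepsilon_{n_i+\ell\lfloor\phi(n_i)\rfloor})$ and $I(\cdots,\varepsilon_{n_i+\ell\lfloor\phi(n_i)\rfloor},0)$ are full cylinders of lengths $\beta^{-(n_i+\ell\lfloor\phi(n_i)\rfloor)}$ and $\beta^{-(n_i+\ell\lfloor\phi(n_i)\rfloor+1)}$) produces the factor $\tfrac{1}{\beta-1}$, exactly as in (\ref{estimate2}). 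Through the remaining $\lfloor\phi(n_i)\rfloor-m-1$ free digits (Case II) and the subsequent block of $m$ zeros (Case III), one picks up the factor $\beta^{-(\lfloor\phi(n_i)\rfloor-m-1)}$ via $|I_n|\leq\beta^{-n}$. Multiplying gives $\beta^{m+1-\log_\beta(\beta-1)-\lfloor\phi(n_i)\rfloor}$ per cycle.

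Iterating this estimate $(j-1)$ times, which is exactly the computation performed in (\ref{diedai}) with $n_1$ replaced by $n_i$ and $k_1-1$ replaced by $j-1$, yields
\[
\nu(I(\varepsilon_1,\dots,\varepsilon_{n_i+(j-1)\lfloor\phi(n_i)\rfloor+\lfloor\phi(n_i)\rfloor-m})) \leq \beta^{(j-1)(m+1-\log_\beta(\beta-1)-\lfloor\phi(n_i)\rfloor)}\cdot \nu(I(\varepsilon_1,\dots,\varepsilon_{n_i})).
\]
Substituting the bound $\nu(I(\varepsilon_1,\dots,\varepsilon_{n_i}))\leq \beta^{U_i-n_i}$ from Lemma \ref{DIE} and combining the exponents gives precisely the claimed inequality.

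The main obstacle is entirely bookkeeping: one has to verify that at every transition of the form $n_i+\ell\lfloor\phi(n_i)\rfloor$ and $n_i+\ell\lfloor\phi(n_i)\rfloor-m+r$ the correct case among (I)–(III) of Step 2 is invoked, and that the cumulative exponent collapses to the stated closed form. A pleasant simplification is that the case distinction $r_i \leq m$ versus $r_i>m$, which was necessary in the global recursion of Lemma \ref{DIE}, is irrelevant here because the restriction $1\leq j\leq k_i-1$ keeps us strictly inside block $i$ and away from its right boundary, so the full-cylinder structure supplied by Lemma \ref{full} applies uniformly at every transition.
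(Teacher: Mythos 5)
Your proposal is correct and follows essentially the same route as the paper: a one-step recursion at each nonzero digit $\varepsilon_{n_i+(\ell-1)\lfloor\phi(n_i)\rfloor+1}$ yielding the factor $\frac{1}{\beta-1}\cdot\beta^{-(\lfloor\phi(n_i)\rfloor-m-1)}$ per cycle, iterated $(j-1)$ times and then combined with the bound $\nu(I(\varepsilon_1,\cdots,\varepsilon_{n_i}))\leq\beta^{U_i-n_i}$ from Lemma \ref{DIE}. Your observation that the $r_i\leq m$ versus $r_i>m$ distinction plays no role here is also consistent with the paper's argument.
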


\begin{proof}
Since $\varepsilon_{n_i + (j-1)\lfloor\phi(n_i)\rfloor +1} \neq 0$, by the definition of the measure $\nu$, we deduce
\begin{align*}
\nu(I(\varepsilon_1,\cdots,\varepsilon_{n_i + (j-1)\lfloor\phi(n_i)\rfloor + \lfloor\phi(n_i)\rfloor -m}))=&
\frac{|I(\varepsilon_1,\cdots,\varepsilon_{n_i + (j-1)\lfloor\phi(n_i)\rfloor + \lfloor\phi(n_i)\rfloor -m})|}
{\beta^{-(n_i + (j-1)\lfloor\phi(n_i)\rfloor)}- \beta^{-(n_i + (j-1)\lfloor\phi(n_i)\rfloor+1)}} \\
&\times \nu(I(\varepsilon_1,\cdots,\varepsilon_{n_i + (j-2)\lfloor\phi(n_i)\rfloor + \lfloor\phi(n_i)\rfloor -m})).
\end{align*}
Therefore,
\begin{align*}
\nu(I(\varepsilon_1,\cdots,\varepsilon_{n_i + (j-1)\lfloor\phi(n_i)\rfloor + [\phi(n_i)] -m}))\leq & \frac{1}{\beta-1}\cdot \frac{1}{\beta^{\lfloor\phi(n_i)\rfloor-m-1}} \cdot
\nu(I(\varepsilon_1,\cdots,\varepsilon_{n_i + (j-2)\lfloor\phi(n_i)\rfloor + \lfloor\phi(n_i)\rfloor -m})).
\end{align*}
Repeating the above procedure, we obtain that
\[
\nu(I(\varepsilon_1,\cdots,\varepsilon_{n_i + (j-1)\lfloor\phi(n_i)\rfloor + \lfloor\phi(n_i)\rfloor -m})) \leq
\left(\frac{1}{\beta-1}\cdot \frac{1}{\beta^{\lfloor\phi(n_i)\rfloor-m-1}}\right)^{j-1} \cdot \nu(I(\varepsilon_1,\cdots,\varepsilon_{n_i})).
\]
By Lemma \ref{DIE}, we complete the proof.
\end{proof}

Next we will check the inequality in Proposition \ref{MMDP} according to the classification of $n$ at the end of Step~2.
That is,
\begin{equation}\label{MMDP inequality}
\nu(I(\varepsilon_1,\cdots,\varepsilon_{n})) \leq C_\delta \cdot|(\varepsilon_1,\cdots,\varepsilon_{n})|^{s-\delta},
\end{equation}
where $C_\delta>0$ is a constant only depending on $\delta$.

(I) When $ n_i <n \leq n_i + \lfloor\phi(n_i)\rfloor~(i \geq 1)$, by Lemma \ref{DIE},  we have that
\[
\nu(I(\varepsilon_1,\cdots,\varepsilon_{n})) = \nu(I(\varepsilon_1,\cdots,\varepsilon_{n_i})) \leq \beta^{U_i - n_i} \leq \beta^{\frac{\delta}{2}(n_i+\lfloor\phi(n_i)\rfloor)-n_i},
\]
where the last inequality is from (\ref{tiaojian}). It follows from Proposition \ref{A0} that $$C_0\beta^{-n} \leq |(\varepsilon_1,\cdots,\varepsilon_{n})| \leq \beta^{-n}$$ and hence
\begin{align*}
\frac{\nu(I(\varepsilon_1,\cdots,\varepsilon_{n}))}{|(\varepsilon_1,\cdots,\varepsilon_{n}|^{s-\delta}}
\leq C^{\delta -s}_0 \beta^{n(s-\delta) +\frac{\delta}{2}(n_i+\lfloor\phi(n_i)\rfloor) - n_i} \leq C^{\delta -s}_0 \beta^{n_it_i},
\end{align*}
where $t_i = (1+\lfloor\phi(n_i)\rfloor/n_i)(s-\delta/2)-1$ and the last inequality is because $n \leq n_i + \lfloor\phi(n_i)\rfloor$. By the definition of $s$ and $t_i$, we know that $\lim\limits_{i\to \infty} t_i = -\delta/(2s) \leq -\delta/2$ for (\ref{zilie}). So there exists a constant $C_1$ such that $\beta^{n_it_i} \leq C_1$ for all $i \geq 1$. Let $C_\delta = C_1\cdot C^{\delta -s}_0$. Then we obtain that (\ref{MMDP inequality}) holds for the measure $\nu$ and such a constant $C_\delta$.

(II) When $ n_i + j\lfloor\phi(n_i)\rfloor < n \leq n_i + j\lfloor\phi(n_i)\rfloor+ \lfloor\phi(n_i)\rfloor -m$ ($i \geq 1$ and $1\leq j \leq k_i -1$), we deduce that
\[
\nu(I(\varepsilon_1,\cdots,\varepsilon_{n})) = \frac{|I(\varepsilon_1,\cdots,\varepsilon_{n})|}
{\beta^{-(n_i+j\lfloor\phi(n_i)\rfloor)}-\beta^{-(n_i+j\lfloor\phi(n_i)\rfloor+1)}}
\cdot \nu(I(\varepsilon_1,\cdots,\varepsilon_{n_i + (j-1)\lfloor\phi(n_i)\rfloor + \lfloor\phi(n_i)\rfloor -m})).
\]
In view of  Lemma \ref{DIEDAI}, we have that
\begin{align*}
\nu(I(\varepsilon_1,\cdots,\varepsilon_{n})) &\leq \frac{\beta^{n_i+j\lfloor\phi(n_i)\rfloor+1}}{\beta-1} \cdot \frac{1}{\beta^n} \cdot \frac{\beta^{(m+1-\log_\beta(\beta-1))(j-1)+U_i}}{\beta^{n_i+(j-1)\lfloor\phi(n_i)\rfloor}}\\
& = \frac{\beta^{-n}}{\beta-1} \cdot \beta^{(m+1-\log_\beta(\beta-1))(j-1)+U_i+\lfloor\phi(n_i)\rfloor+1}.
\end{align*}
Therefore,
\begin{align}\label{II}
\frac{\nu(I(\varepsilon_1,\cdots,\varepsilon_{n}))}{|(\varepsilon_1,\cdots,\varepsilon_{n}|^{s-\delta}}
\leq \frac{C^{\delta-s}_0}{\beta-1}\cdot \beta^{n(s-\delta-1)} \cdot \beta^{(m+1-\log_\beta(\beta-1))(j-1)+U_i+\lfloor\phi(n_i)\rfloor+1}.
\end{align}
Note that $s-\delta-1<0$ and $n_i + j\lfloor\phi(n_i)\rfloor < n \leq n_i + j\lfloor\phi(n_i)\rfloor + \lfloor\phi(n_i)\rfloor -m$, so
\begin{align*}
&n(s-\delta-1)+ (m+1-\log_\beta(\beta-1))(j-1)+U_i+\lfloor\phi(n_i)\rfloor+1 \\
\leq& (n_i + j\lfloor\phi(n_i)\rfloor)(s-\delta-1)+ (m+1-\log_\beta(\beta-1))(j-1)+U_i+\lfloor\phi(n_i)\rfloor+1\\
= & (n_i + \lfloor\phi(n_i)\rfloor)(s-\delta-1) + (j-1)W _i + U_i +\lfloor\phi(n_i)\rfloor+1\\
\leq & (n_i + \lfloor\phi(n_i)\rfloor)(s-\delta/2-1)+\lfloor\phi(n_i)\rfloor+1,
\end{align*}
where $W_i = \lfloor\phi(n_i)\rfloor(s-\delta-1) + m+1-\log_{\beta}(\beta-1) <0$ and the last inequality follows from
the condition (\ref{tiaojian}), i.e., $U_i \leq \delta(n_i + \lfloor\phi(n_i)\rfloor)/2$. Combining this with (\ref{II}), we obtain that
\[
\frac{\nu(I(\varepsilon_1,\cdots,\varepsilon_{n}))}{|(\varepsilon_1,\cdots,\varepsilon_{n}|^{s-\delta}} \leq \frac{C^{\delta-s}_0}{\beta-1}\cdot \beta^{n_it_i},
\]
where $t_i = (1+\lfloor\phi(n_i)\rfloor/n_i)(s-\delta/2-1)+ (\lfloor\phi(n_i)\rfloor+1)/n_i$. By (\ref{zilie}) and the definition of $s$, we know that $\lim\limits_{i\to \infty}t_i = -\delta/(2s) \leq -\delta/2$. The similar methods at the end of Case (I) assure that (\ref{MMDP inequality}) holds.

(III) When $ n_i + j\lfloor\phi(n_i)\rfloor + \lfloor\phi(n_i)\rfloor -m < n \leq n_i + (j+1)\lfloor\phi(n_i)\rfloor$ ($i \geq 1$ and $1\leq j \leq k_i -1$), then
\[
\nu(I(\varepsilon_1,\cdots,\varepsilon_{n})) = \nu(I(\varepsilon_1,\cdots,\varepsilon_{n_i + j\lfloor\phi(n_i)\rfloor + \lfloor\phi(n_i)\rfloor -m})).
\]
It follows from Lemma \ref{DIEDAI} that
\[
\nu(I(\varepsilon_1,\cdots,\varepsilon_{n_i + j\lfloor\phi(n_i)\rfloor + \lfloor\phi(n_i)\rfloor -m})) \leq
\frac{\beta^{j(m+1-\log_\beta(\beta-1))+U_i}}{\beta^{n_i+j\lfloor\phi(n_i)\rfloor}}.
\]
Therefore,
\begin{align}\label{III}
\frac{\nu(I(\varepsilon_1,\cdots,\varepsilon_{n}))}{|(\varepsilon_1,\cdots,\varepsilon_{n}|^{s-\delta}} \leq C^{\delta-s}_0 \beta^{n(s-\delta)} \cdot \frac{\beta^{j(m+1-\log_\beta(\beta-1))+U_i}}{\beta^{n_i+j\lfloor\phi(n_i)\rfloor}}.
\end{align}
Since $s-\delta >0$ and $n_i + j\lfloor\phi(n_i)\rfloor + \lfloor\phi(n_i)\rfloor -m< n \leq n_i +(j+1)\lfloor\phi(n_i)\rfloor$, we have
\begin{align*}
&n(s-\delta)+ j(m+1-\log_\beta(\beta-1))+U_i - \left(n_i+j\lfloor\phi(n_i)\rfloor\right)\\
\leq &(n_i +(j+1)\lfloor\phi(n_i)\rfloor)(s-\delta) + j(m+1-\log_\beta(\beta-1))+U_i -\left(n_i+j\lfloor\phi(n_i)\rfloor\right)\\
=& (n_i +\lfloor\phi(n_i)\rfloor)(s-\delta)+jW_i+U_i -n_i \leq (n_i +\lfloor\phi(n_i)\rfloor)(s-\delta/2)-n_i.
\end{align*}
Combining this with (\ref{III}), we obtain that
\[
\frac{\nu(I(\varepsilon_1,\cdots,\varepsilon_{n}))}{|(\varepsilon_1,\cdots,\varepsilon_{n}|^{s-\delta}} \leq C^{\delta-s}_0  \beta^{n_it_i},
\]
where $t_i = (1+\lfloor\phi(n_i)\rfloor/n_i)(s-\delta/2)-1$. By (\ref{zilie}) and the definition of $s$, we know that $\lim\limits_{i\to \infty}t_i = -\delta/(2s) \leq -\delta/2$. The similar methods at the end of Case (I) guarantee that (\ref{MMDP inequality}) holds.

(IV) Let $n_i + k_i\lfloor\phi(n_i)\rfloor < n \leq n_{i+1}~(i \geq 1)$.\\
{\bf(i)} If $r_i \leq m$, we know that
\[
\nu(I(\varepsilon_1,\cdots,\varepsilon_{n}))  = \nu(I(\varepsilon_1,\cdots,\varepsilon_{n_i + (k_i-1)\lfloor\phi(n_i)\rfloor + \lfloor\phi(n_i)\rfloor -m})).
\]
It follows from Lemma \ref{DIEDAI} that
\[
\nu(I(\varepsilon_1,\cdots,\varepsilon_{n_i + (k_i-1)[\phi(n_i)] + \lfloor\phi(n_i)\rfloor -m})) \leq
\frac{\beta^{(m+1-\log_\beta(\beta-1))(k_i-1)+U_i}}{\beta^{n_i+(k_i-1)\lfloor\phi(n_i)\rfloor}}.
\]
Therefore,
\begin{align}\label{IVi}
\frac{\nu(I(\varepsilon_1,\cdots,\varepsilon_{n}))}{|(\varepsilon_1,\cdots,\varepsilon_{n}|^{s-\delta}} \leq C^{\delta-s}_0 \beta^{n(s-\delta)} \cdot \frac{\beta^{(m+1-\log_\beta(\beta-1))(k_i-1)+U_i}}{\beta^{n_i+(k_i-1)\lfloor\phi(n_i)\rfloor}}.
\end{align}
Notice that $n_i + k_i\lfloor\phi(n_i)\rfloor < n \leq n_{i+1}~(i \geq 1)$ and  $r_i \leq m$, by the definition of $k_i$, we know that $n_{i+1} = n_i+k_i\lfloor\phi(n_i)\rfloor+r_i+1 \leq n_i+k_i\lfloor\phi(n_i)\rfloor+m+1$. So,
\begin{align*}
&n(s-\delta)+ (m+1-\log_\beta(\beta-1))(k_i-1)+U_i - \left(n_i+(k_i-1)\lfloor\phi(n_i)\rfloor\right)\\
\leq &(n_i +k_i\lfloor\phi(n_i)\rfloor+m+1)(s-\delta) + (m+1-\log_\beta(\beta-1))(k_i-1)
+U_i -\left(n_i+(k_i-1)\lfloor\phi(n_i)\rfloor\right) \\
=& (n_i +\lfloor\phi(n_i)\rfloor)(s-\delta)+ (m+1)(s-\delta)+(k_i-1)W_i+ U_i -n_i \\
\leq& (n_i +\lfloor\phi(n_i)\rfloor)(s-\delta/2)-n_i + (m+1)(s-\delta),
\end{align*}
Combining this with (\ref{IVi}), we have
\[
\frac{\nu(I(\varepsilon_1,\cdots,\varepsilon_{n}))}{|(\varepsilon_1,\cdots,\varepsilon_{n}|^{s-\delta}} \leq C^{\delta-s}_0  \beta^{n_it_i},
\]
where $t_i = (1+\lfloor\phi(n_i)\rfloor/n_i)(s-\delta/2)-1+(m+1)(s-\delta)/n_i$. By (\ref{zilie}) and the definition of $s$, we know that $\lim\limits_{i\to \infty}t_i = -\delta/(2s) \leq -\delta/2$ and hence that (\ref{MMDP inequality}) holds.\\
{\bf (ii)} Let $r_i >m$.

(1) For $n_i + k_i\lfloor\phi(n_i)\rfloor < n \leq n_{i+1} -m -1$, we obtain that
\begin{align*}
\nu(I(\varepsilon_1,\cdots,\varepsilon_{n}))=& \frac{|I(\varepsilon_1,\cdots,\varepsilon_{n})|}
{\beta^{-(n_i+k_i\lfloor\phi(n_i)\rfloor)}-\beta^{-(n_i+k_i\lfloor\phi(n_i)\rfloor+1)}} \cdot
\nu(I(\varepsilon_1,\cdots,\varepsilon_{n_i + (k_i-1)\lfloor\phi(n_i)\rfloor + \lfloor\phi(n_i)\rfloor -m})).
\end{align*}
It follows from Lemma \ref{DIEDAI} that
\[
\nu(I(\varepsilon_1,\cdots,\varepsilon_{n_i + (k_i-1)\lfloor\phi(n_i)\rfloor + \lfloor\phi(n_i)\rfloor -m})) \leq
\frac{\beta^{(m+1-\log_\beta(\beta-1))(k_i-1)+U_i}}{\beta^{n_i+(k_i-1)\lfloor\phi(n_i)\rfloor}}.
\]
So,
\begin{align*}
\nu(I(\varepsilon_1,\cdots,\varepsilon_{n})) \leq&
\frac{\beta^{n_i+k_i\lfloor\phi(n_i)\rfloor+1}}{\beta -1}\cdot \frac{1}{\beta^n}\cdot
\frac{\beta^{(m+1-\log_\beta(\beta-1))(k_i-1)+U_i}}{\beta^{n_i+(k_i-1)\lfloor\phi(n_i)\rfloor}}\notag\\
= & \frac{\beta^{-n}}{\beta -1} \cdot \beta^{(m+1-\log_\beta(\beta-1))(k_i-1)+U_i+\lfloor\phi(n_i)\rfloor+1}.
\end{align*}
Therefore,
\begin{align}\label{IVii(1)}
\frac{\nu(I(\varepsilon_1,\cdots,\varepsilon_{n}))}{|(\varepsilon_1,\cdots,\varepsilon_{n}|^{s-\delta}} \leq  \frac{C^{\delta-s}_0}{\beta -1} \cdot \beta^{n(s-\delta-1)} \cdot \beta^{(m+1-\log_\beta(\beta-1))(k_i-1)+U_i+\lfloor\phi(n_i)\rfloor+1}.
\end{align}
Note that $s-\delta-1 <0$ and $n_i + k_i\lfloor\phi(n_i)\rfloor < n \leq n_{i+1} -m -1$, we deduce
\begin{align*}
&n(s-\delta-1)+(m+1-\log_\beta(\beta-1))(k_i-1)+U_i+\lfloor\phi(n_i)\rfloor+1\\
\leq & (n_i + k_i\lfloor\phi(n_i)\rfloor)(s-\delta-1)+ (m+1-\log_\beta(\beta-1))(k_i-1)+U_i+\lfloor\phi(n_i)\rfloor+1 \\
\leq & (n_i + \lfloor\phi(n_i)\rfloor)(s-\delta-1)+ (k_i-1)W_i + U_i+\lfloor\phi(n_i)\rfloor+1 \\
\leq & (n_i + \lfloor\phi(n_i)\rfloor)(s-\delta/2-1)+\lfloor\phi(n_i)\rfloor+1.
\end{align*}
Combining this with (\ref{IVii(1)}), we obtain that
\[
\frac{\nu(I(\varepsilon_1,\cdots,\varepsilon_{n}))}{|(\varepsilon_1,\cdots,\varepsilon_{n}|^{s-\delta}} \leq \frac{C^{\delta-s}_0}{\beta-1}\cdot  \beta^{n_it_i},
\]
where  $t_i = (1+\lfloor\phi(n_i)\rfloor/n_i)(s-\delta/2-1)+ (\lfloor\phi(n_i)\rfloor+1)/n_i$. By (\ref{zilie}) and the definition of $s$, we know that $\lim\limits_{i\to \infty}t_i = -\delta/(2s) \leq -\delta/2$ and hence that (\ref{MMDP inequality}) holds.

(2) For $n_{i+1}  -m -1 < n \leq n_{i+1}$, we get that
\[
\nu(I(\varepsilon_1,\cdots,\varepsilon_{n})) = \nu(I(\varepsilon_1,\cdots,\varepsilon_{n_{i+1}-m-1})).
\]
Since $\varepsilon_{n_i + k_i\lfloor\phi(n_i)\rfloor +1} \neq 0$, by the distribution of the measure $\nu$, we deduce
\begin{align*}
\nu(I(\varepsilon_1,\cdots,\varepsilon_{n_{i+1}-m-1})) =& \frac{|I(\varepsilon_1,\cdots,\varepsilon_{n_{i+1}-m-1})|}{\beta^{-(n_i+k_i\lfloor\phi(n_i)\rfloor)}-
\beta^{-(n_i+k_i\lfloor\phi(n_i)\rfloor+1)}} \cdot
 \nu(I(\varepsilon_1,\cdots,\varepsilon_{n_i + (k_i-1)\lfloor\phi(n_i)\rfloor + \lfloor\phi(n_i)\rfloor -m})).
\end{align*}
Therefore,
\begin{align}\label{IVii(21)}
\frac{\nu(I(\varepsilon_1,\cdots,\varepsilon_{n_{i+1}-m-1}))}{\nu(I(\varepsilon_1,\cdots,\varepsilon_{n_i + (k_i-1)\lfloor\phi(n_i)\rfloor + \lfloor\phi(n_i)\rfloor -m}))}\leq &\frac{\beta^{n_i+k_i\lfloor\phi(n_i)\rfloor+1}}{\beta-1} \cdot
\frac{1}{\beta^{n_{i+1}-m-1}}
=\frac{1}{\beta-1}\cdot \frac{1}{\beta^{r_i-m-1}},
\end{align}
where the last equation is from the definition of $n_{i+1}$, i.e., $n_{i+1} = n_i + k_i\lfloor\phi(n_i)\rfloor+r_i +1$.
Notice that
\[
\nu(I(\varepsilon_1,\cdots,\varepsilon_{n_i + (k_i-1)\lfloor\phi(n_i)\rfloor + \lfloor\phi(n_i)\rfloor -m})) \leq
\frac{\beta^{(m+1-\log_\beta(\beta-1))(k_i-1)+U_i}}{\beta^{n_i+(k_i-1)\lfloor\phi(n_i)\rfloor}},
\]
combining this with (\ref{IVii(21)}), we obtain that
\[
\nu(I(\varepsilon_1,\cdots,\varepsilon_{n_{i+1}-m-1}))\leq  \frac{1}{\beta-1}\cdot \frac{1}{\beta^{r_i-m-1}} \cdot \frac{\beta^{(m+1-\log_\beta(\beta-1))(k_i-1)+U_i}}{\beta^{n_i+(k_i-1)\lfloor\phi(n_i)\rfloor}}.
\]
Therefore,
\begin{align*}
\frac{\nu(I(\varepsilon_1,\cdots,\varepsilon_{n}))}{|(\varepsilon_1,\cdots,\varepsilon_{n}|^{s-\varepsilon}} \leq & \frac{C^{\delta-s}_0}{\beta -1} \cdot \frac{\beta^{n(s-\delta)}}{\beta^{r_i-m-1}} \cdot \frac{\beta^{(m+1-\log_\beta(\beta-1))(k_i-1)+U_i}}{\beta^{n_i+(k_i-1)\lfloor\phi(n_i)\rfloor}}\notag\\
\leq & \frac{C^{\delta-s}_0}{\beta -1} \cdot \frac{\beta^{(n_i+k_i\lfloor\phi(n_i)\rfloor+r_i+1)(s-\delta)+(m+1-\log_\beta(\beta-1))(k_i-1)+U_i}}
{\beta^{n_i+(k_i-1)\lfloor\phi(n_i)\rfloor+l_i-m-1}}\notag\\
= &\frac{C^{\delta-s}_0}{\beta -1} \cdot \beta^{(n_i+\lfloor\phi(n_i)\rfloor)(s-\delta)+ (r_i+1)(s-\delta-1)+(k_i-1)W_i + U_i -n_i +(m+2)}\notag\\
\leq & \frac{C^{\delta-s}_0}{\beta -1} \cdot \beta^{(n_i+\lfloor\phi(n_i)\rfloor)(s-\delta/2) -n_i +(m+2)} = \frac{C^{\delta-s}_0}{\beta -1} \cdot \beta^{n_it_i},
\end{align*}
where $t_i = (1+\lfloor\phi(n_i)\rfloor/n_i)(s-\delta/2)-1 + (m+2)/n_i$, the second inequality follows from $s-\delta >0$ and $n \leq n_{i+1} = n_i+k_i\lfloor\phi(n_i)\rfloor+r_i+1$ and the last inequality is from the condition (\ref{tiaojian}), $s-\delta-1<0$ and $W_i<0$.  By (\ref{zilie}) and the definition of $s$, we know that $\lim\limits_{i\to \infty}t_i = -\delta/(2s) \leq -\delta/2$ and hence that (\ref{MMDP inequality}) holds.

Applying Proposition \ref{MMDP} to the set $F$ and the measure $\nu$, we have that
\[
\dim_{\rm H} F_\phi \geq \dim_{\rm H} F \geq s-\delta.
\]
Letting $\delta\to 0^+$, we have $\dim_{\rm H} F_\phi \geq s$. We eventually obtain
\[
\dim_{\rm H} F_\phi \geq \frac{1}{1+\lim\limits_{n \to \infty}\phi(n)/n}.
\]

\subsubsection{General case for any $\beta>1$}

Now we will extend the result of Lemma \ref{F} from $\beta \in A_0$ to any $\beta >1$ by using the approximation method for the $\beta$-shift in Section \ref{Approximation method}. Let $\beta >\beta^\prime >1$ and $\beta^\prime \in A_0$. Here we denote $\ell_n(x,\beta)$ and $\ell_n(x,\beta^\prime)$ the length of the longest string of zeros just after the $n$-th digit in the $\beta$-expansion and $\beta^\prime$-expansion of $x$ respectively.
Lemma \ref{F} has showed that
\begin{equation}\label{F prime}
\dim_{\rm H}\left\{x \in [0,1): \limsup_{n \to \infty} \frac{\ell_n(x,\beta^\prime)}{\phi(n)} = 1 \right\} \geq
\frac{1}{1+\liminf\limits_{n \to \infty} \phi(n)/n}
\end{equation}
under the assumption that $\phi$ is a positive and nondecreasing function with $\phi(n) \to \infty$ as $n \to \infty$.

\begin{lemma}\label{F general}
Let $\beta >1$ be a real number. Assume that $\phi$ is a positive and nondecreasing function defined on $\mathbb{N}$ with $\phi(n) \to \infty$ as $n \to \infty$. Then
\begin{equation*}
\dim_{\rm H} F_\phi \geq \frac{1}{1+\liminf\limits_{n \to \infty} \phi(n)/n}.
\end{equation*}
\end{lemma}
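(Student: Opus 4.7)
The plan is to deduce Lemma \ref{F general} from Lemma \ref{F} via the approximation method described in Section \ref{Approximation method}, letting an auxiliary base $\beta' \in A_0$ approach $\beta$ from below.

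First I fix an arbitrary $\beta' \in A_0$ with $1 < \beta' < \beta$; such $\beta'$ exist in abundance since $A_0$ is dense in $(1,+\infty)$ (as recalled after Proposition \ref{A0}). Write $\ell_n(x,\beta)$ and $\ell_n(y,\beta')$ for the lengths of the longest string of zeros just after the $n$-th digit in the $\beta$-expansion of $x$ and the $\beta'$-expansion of $y$, respectively, and put
\[
F_\phi^{\beta'} = \Bigl\{ y \in [0,1): \limsup_{n \to \infty} \frac{\ell_n(y,\beta')}{\phi(n)} = 1 \Bigr\}.
\]
Since $\beta' \in A_0$ and $\phi$ is positive, nondecreasing with $\phi(n) \to \infty$, Lemma \ref{F} (applied to the base $\beta'$) yields the lower bound \eqref{F prime} for $\dim_{\rm H} F_\phi^{\beta'}$.

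Next I transfer this estimate to $F_\phi$ using the projection $h: H_\beta^{\beta'} \to [0,1)$ of Proposition \ref{h}. Proposition \ref{h}(i) says $\varepsilon(h(x),\beta') = \varepsilon(x,\beta)$ for every $x \in H_\beta^{\beta'}$, so the two digit sequences are identical; consequently $\ell_n(x,\beta) = \ell_n(h(x),\beta')$ for all $n \geq 1$. Since $h$ is a bijection from $H_\beta^{\beta'}$ onto $[0,1)$ by Proposition \ref{h}(ii), this identity gives
\[
h^{-1}(F_\phi^{\beta'}) = F_\phi \cap H_\beta^{\beta'}.
\]
By Proposition \ref{h}(iv), $h$ is H\"older continuous on $H_\beta^{\beta'}$ with exponent $\log\beta'/\log\beta$, and hence for any subset $S \subseteq H_\beta^{\beta'}$ the standard H\"older bound for Hausdorff dimension gives $\dim_{\rm H} h(S) \leq (\log\beta/\log\beta') \cdot \dim_{\rm H} S$. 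Taking $S = F_\phi \cap H_\beta^{\beta'}$ and combining with the estimate from Lemma \ref{F} produces
\[
\dim_{\rm H} F_\phi \geq \dim_{\rm H}(F_\phi \cap H_\beta^{\beta'}) \geq \frac{\log\beta'}{\log\beta} \cdot \dim_{\rm H} F_\phi^{\beta'} \geq \frac{\log\beta'}{\log\beta} \cdot \frac{1}{1+\liminf\limits_{n\to\infty}\phi(n)/n}.
\]

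Finally, since the left-hand side does not depend on $\beta'$, I let $\beta' \to \beta^-$ along a sequence in $A_0$ (which is possible because $A_0$ is dense in $(1,+\infty)$); then $\log\beta'/\log\beta \to 1$ and the desired inequality $\dim_{\rm H} F_\phi \geq 1/(1+\liminf_{n\to\infty}\phi(n)/n)$ follows. The only point that needs care is the identity $\ell_n(x,\beta) = \ell_n(h(x),\beta')$, but this is immediate from Proposition \ref{h}(i), so there is no serious obstacle; everything else is a direct application of the approximation scheme together with Lemma \ref{F}.
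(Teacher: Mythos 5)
Your proposal is correct and follows essentially the same route as the paper's own proof: apply Lemma \ref{F} to an auxiliary base $\beta'\in A_0$, use the digit-preserving bijection $h$ of Proposition \ref{h} to identify $F_\phi\cap H_\beta^{\beta'}$ with $F_\phi^{\beta'}$, invoke the H\"older bound to pull the dimension estimate back, and let $\beta'\to\beta^-$ using the density of $A_0$. No gaps.
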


\begin{proof}
Let $\beta >\beta^\prime >1$ and $\beta^\prime \in A_0$.
Since $H_\beta^{\beta^\prime}:= \pi_\beta(\Sigma_{\beta^\prime})$ is a Cantor set of $[0,1)$, we have that
\begin{equation}\label{1}
\dim_{\rm H}\left\{x \in [0,1): \limsup_{n \to \infty} \frac{\ell_n(x,\beta)}{\phi(n)} = 1 \right\} \geq
\dim_{\rm H}\left\{x \in H_\beta^{\beta^\prime}: \limsup_{n \to \infty} \frac{\ell_n(x,\beta)}{\phi(n)} = 1 \right\}.
\end{equation}
It follows from Proposition \ref{h} $(iv)$ that the function $h$ is H\"{o}lder continuous and hence that
\begin{align}\label{2}
& \dim_{\rm H} \left\{x \in H_\beta^{\beta^\prime}: \limsup_{n \to \infty} \frac{\ell_n(x,\beta)}{\phi(n)} = 1 \right\}
\geq \frac{\log \beta^\prime}{\log \beta} \cdot \dim_H h\left(\left\{x \in H_\beta^{\beta^\prime}: \limsup_{n \to \infty} \frac{\ell_n(x,\beta)}{\phi(n)} = 1 \right\}\right).
\end{align}
Note that $\varepsilon(h(x),\beta^\prime)= \varepsilon(x,\beta)$ for any $x \in H_\beta^{\beta^\prime}$, we deduce that $\ell_n(h(x),\beta^\prime)= \ell_n(x,\beta)$ by the definition of $\ell_n(x,\beta)$ defined in Section 3. Since $h$ is bijective, we obtain
\[
h\left(\left\{x \in H_\beta^{\beta^\prime}: \limsup_{n \to \infty} \frac{\ell_n(x,\beta)}{\phi(n)} = 1 \right\}\right) =
\left\{y \in [0,1): \limsup_{n \to \infty} \frac{\ell_n(y,\beta^\prime)}{\phi(n)} = 1 \right\}.
\]
Combining this with (\ref{1}) and (\ref{2}), we have that
\[
\dim_{\rm H} F_\phi \geq \frac{\log \beta^\prime}{\log \beta}\cdot \dim_{\rm H} \left\{y \in [0,1): \limsup_{n \to \infty} \frac{\ell_n(y,\beta^\prime)}{\phi(n)} = 1 \right\} \geq \frac{\log \beta^\prime}{\log \beta}\cdot
\frac{1}{1+\liminf\limits_{n \to \infty} \phi(n)/n},
\]
where the last inequality is from (\ref{F prime}). At last, let $\beta^\prime \to \beta$, we complete the proof since $A_0$ is dense in $(1,+\infty)$.
\end{proof}

By the upper bound Hausdorff dimension of $F_\phi$ in Section 4.1, we have

\begin{proposition}\label{F quan}
Let $\beta >1$ be a real number.
Assume that $\phi$ is a positive and nondecreasing function defined on $\mathbb{N}$ with $\phi(n) \to \infty$ as $n \to \infty$. Then
\begin{equation*}
\dim_{\rm H} F_\phi = \frac{1}{1+\liminf\limits_{n \to \infty} \phi(n)/n}.
\end{equation*}
\end{proposition}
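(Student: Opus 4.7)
The plan is to combine the two bounds already established in Section 4: the upper bound derived at the end of Subsection 4.1 and the lower bound in Lemma \ref{F general}. Since the two estimates produce the same value $\frac{1}{1+\liminf_{n\to\infty}\phi(n)/n}$, equality in Proposition \ref{F quan} is immediate. No further construction is needed at this stage.

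In more detail, for the upper bound I would recall the chain $F_\phi \subset E_{(1-\delta)\phi}$ valid for every $0<\delta<1$, apply Lemma \ref{E} to $E_{(1-\delta)\phi}$ to obtain
\[
\dim_{\rm H} F_\phi \leq \frac{1}{1+(1-\delta)\liminf_{n\to\infty}\phi(n)/n},
\]
and then let $\delta\to 0^+$. For the lower bound I would invoke Lemma \ref{F general} directly, since its hypotheses match those of Proposition \ref{F quan} (positive, nondecreasing $\phi$ with $\phi(n)\to\infty$). The two inequalities then pinch the Hausdorff dimension of $F_\phi$ to the stated value.

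The substantive work has already been done in the preceding material, so the real obstacles lie behind us rather than ahead. The hard part was the lower bound for $\beta \in A_0$ in Lemma \ref{F}: constructing the appropriate Cantor-like subset $F$ whose points realize $\limsup_{n\to\infty}\ell_n(x)/\phi(n)=1$, designing the probability measure $\nu$ supported on $F$ via the full-cylinder mechanism of Lemma \ref{full}, and then carefully verifying the hypothesis $\nu(I_n)\leq c\,|I_n|^{s-\delta}$ of Proposition \ref{MMDP} case by case depending on the position of $n$ relative to the sparse sequence $\{n_i\}$. The passage from $\beta\in A_0$ to general $\beta>1$ via the H\"older map $h$ of Proposition \ref{h} (iv) and the density of $A_0$ in $(1,+\infty)$ completes the lower bound and was the second nontrivial step.

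Consequently the proof of Proposition \ref{F quan} reduces to citing the inequalities
\[
\frac{1}{1+\liminf_{n\to\infty}\phi(n)/n} \leq \dim_{\rm H} F_\phi \leq \frac{1}{1+\liminf_{n\to\infty}\phi(n)/n},
\]
the left one from Lemma \ref{F general} and the right one from the argument in Subsection 4.1, and concluding equality.
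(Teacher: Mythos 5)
Your proposal is correct and follows exactly the paper's route: the upper bound comes from the inclusion $F_\phi \subset E_{(1-\delta)\phi}$ together with Lemma \ref{E} and letting $\delta\to 0^+$, and the lower bound is precisely Lemma \ref{F general}, whose hypotheses match those of the proposition. Nothing further is needed.
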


For any $\beta \in A_0$, being similar to the Lemma \ref{F}, we can construct a Cantor-like subset $E$ of $E_\phi$. We choose a subsequence $\{n_i\}_{i \geq 1}$ such that $\liminf\limits_{n \to \infty} \frac{\phi(n)}{n} = \lim\limits_{i \to \infty} \frac{\phi(n_i)}{n_i}$ and $n_{i+1} > n_i + \lfloor\phi(n_i)\rfloor+1$. Denote by $\mathcal{C}_n$ the set of $n$-th order cylinders $I(\varepsilon_1,\cdots,\varepsilon_n)$ satisfying $\varepsilon_k =0$ if $k= n_i+j$ for $i \geq 1$ and $1 \leq j \leq \lfloor\phi(n_i)\rfloor+1$; otherwise, $\varepsilon_k \in \mathcal{A}$. Let
\[
E = \bigcap_{n=1}^\infty \bigcup_{I(\varepsilon_1, \cdots, \varepsilon_n) \in \mathcal{C}_n}I(\varepsilon_1, \cdots, \varepsilon_n).
\]
Then $E \subset E_\phi$. We can also define a measure supported on $E$ like the measure $\nu$ on $F$. Then it can be shown that this measure satisfies the modified mass distributed principle (i.e., Proposition \ref{MMDP}) by similar skills in the proof of Lemma \ref{F}. Thus, we get a lower bound of the Hausdorff dimension of $E_\phi$. Finally, we use the approximation method for the $\beta$-shift in Section \ref{Approximation method} to extend this result from $\beta \in A_0$ to any $\beta > 1$. Combing this with Lemma \ref{E}, we have the following.

\begin{proposition}\label{E quan}
Let $\beta >1$ be a real number.
Assume that $\phi$ is a nonnegative function defined on $\mathbb{N}$.
Then
\begin{equation*}
\dim_{\rm H} E_\phi = \frac{1}{1+\liminf\limits_{n \to \infty} \phi(n)/n}.
\end{equation*}
\end{proposition}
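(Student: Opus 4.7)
The upper bound is already established by Lemma \ref{E}, so the plan is to prove the matching lower bound by imitating (and simplifying) the Cantor-like construction from Lemma \ref{F}, and then transferring from $\beta \in A_0$ to a general $\beta > 1$ via the approximation scheme of Section \ref{Approximation method}.

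First I would fix $\beta \in A_0$, set $s = 1/(1+\liminf_{n\to\infty}\phi(n)/n)$, and choose a sparse subsequence $\{n_i\}$ with $\phi(n_i)/n_i \to \liminf \phi(n)/n$ and with $n_{i+1} - n_i$ much larger than $\lfloor\phi(n_i)\rfloor + 1$. With $\mathcal{C}_n$ and $E$ defined as in the sketch preceding the statement (only ``zero'' positions are constrained; there are no ``nonzero'' constraints, unlike in Lemma \ref{F}), the inclusion $E \subseteq E_\phi$ is immediate since $\ell_{n_i}(x) \geq \lfloor\phi(n_i)\rfloor+1 \geq \phi(n_i)$ for every $x \in E$ and every $i$.

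The heart of the argument is to place a Borel probability measure $\nu$ on $E$ and verify the hypothesis of the modified mass distribution principle (Proposition \ref{MMDP}). I would define $\nu$ on cylinders recursively, exactly as in the proof of Lemma \ref{F}: at a free position the measure splits proportionally to cylinder length across all admissible children, while at a zero position the entire parent mass is transferred to its unique admissible child $0$. Kolmogorov extension then yields a Borel measure concentrated on $E$. To check $\nu(I_n) \leq C_\delta |I_n|^{s-\delta}$ for arbitrarily small $\delta>0$, one splits into cases by the location of $n$ relative to the most recent zero block $[n_i+1, n_i+\lfloor\phi(n_i)\rfloor+1]$: inside a zero block the measure is frozen at $\nu(I_{n_i})$ while $|I_n|$ shrinks by roughly $\beta^{-(n-n_i)}$, and between zero blocks the mass transfers proportionally to cylinder length, whose comparison with $|I_n|$ uses Proposition \ref{Renyi} together with the uniform bound $C_0\beta^{-n} \leq |I_n| \leq \beta^{-n}$ from Proposition \ref{A0}. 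Imposing an inductive sparsity condition on $\{n_i\}$ analogous to \eqref{tiaojian} absorbs all accumulated constants. This case-by-case estimation is the main technical obstacle, but it is strictly simpler than in Lemma \ref{F} because the one-sided constraint on $\ell_n$ eliminates the ``nonzero'' positions in $\mathcal{I}_2$ and thereby several sub-cases. Proposition \ref{MMDP} then yields $\dim_{\rm H} E \geq s - \delta$, and letting $\delta \to 0^+$ gives the lower bound for $\beta \in A_0$.

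Finally I would pass to arbitrary $\beta > 1$ using Section \ref{Approximation method}. Given $\beta' \in A_0 \cap (1,\beta)$, the projection $h : H_\beta^{\beta'} \to [0,1)$ from Proposition \ref{h} satisfies $\varepsilon(h(x),\beta') = \varepsilon(x,\beta)$, so the zero-run lengths are preserved and $h$ sends the portion of $E_\phi$ (computed for base $\beta$) lying in $H_\beta^{\beta'}$ onto the corresponding $E_\phi$ for base $\beta'$. The H\"older estimate of Proposition \ref{h}(iv) then yields
\[
\dim_{\rm H} E_\phi \;\geq\; \frac{\log \beta'}{\log \beta} \cdot \dim_{\rm H} E_\phi(\beta') \;\geq\; \frac{\log \beta'}{\log \beta} \cdot \frac{1}{1+\liminf_{n\to\infty} \phi(n)/n},
\]
and letting $\beta' \nearrow \beta$ along the dense set $A_0 \subset (1,+\infty)$ produces the desired lower bound, completing the proof.
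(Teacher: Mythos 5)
Your proposal follows essentially the same route as the paper: the upper bound is quoted from Lemma \ref{E}, and the lower bound is obtained by the Cantor-like subset $E\subset E_\phi$ built from the cylinder families $\mathcal{C}_n$, a recursively defined measure checked against Proposition \ref{MMDP} (using Propositions \ref{Renyi} and \ref{A0} for $\beta\in A_0$), and the H\"older transfer of Proposition \ref{h} to pass to general $\beta>1$. This matches the paper's own (sketched) argument, including your observation that dropping the forced nonzero digits of $\mathcal{I}_2$ simplifies the case analysis relative to Lemma \ref{F}.
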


Similar with Proposition \ref{E quan}, replacing $\phi(n)$ by $(\phi(n)-n)$, the following is obtained.

\begin{proposition}\label{E 2}
Let $\beta >1$ be a real number.
Assume that $\phi$ is a nonnegative function defined on $\mathbb{N}$ satisfying $\liminf\limits_{n \to \infty} \phi(n)/n \geq 1$. Then
\begin{equation*}
\dim_{\rm H} \big\{x \in [0,1):\ell_n(x)  \geq \phi(n) -n \ i.o.\big\} = \frac{1}{\liminf\limits_{n \to \infty} \phi(n)/n}.
\end{equation*}
\end{proposition}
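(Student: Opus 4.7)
The plan is to derive this from Proposition~\ref{E quan} by a direct substitution. Writing $\eta := \liminf_{n\to\infty} \phi(n)/n$ and $\psi(n) := \max\{\phi(n)-n,\,0\}$, I would first observe that the target set coincides with $E_\psi$. Indeed, for any $n$ with $\phi(n) < n$, the inequality $\ell_n(x) \geq \phi(n)-n$ is trivially satisfied for every $x\in[0,1)$ because $\ell_n(x)\geq 0$, and the same is true for $\ell_n(x) \geq \psi(n) = 0$; for $n$ with $\phi(n)\geq n$, the two inequalities are identical. Hence for each fixed $x$ the two inequalities are satisfied for exactly the same set of indices~$n$, so the ``infinitely often'' sets agree.

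Next I would verify that $\liminf_{n\to\infty}\psi(n)/n = \eta - 1$. If $\eta > 1$, pick any $\eta'\in(1,\eta)$; then eventually $\phi(n)/n > \eta' > 1$, so $\psi(n) = \phi(n)-n$ for all sufficiently large $n$, and consequently $\liminf_{n\to\infty} \psi(n)/n = \liminf_{n\to\infty}(\phi(n)/n - 1) = \eta -1$. If $\eta = 1$, then by definition of $\liminf$ there are infinitely many $n$ with $\phi(n)/n$ arbitrarily close to~$1$, forcing $\psi(n)/n$ arbitrarily close to $0$ along that subsequence; combined with $\psi(n)/n\geq 0$ for all $n$, this gives $\liminf_{n\to\infty}\psi(n)/n = 0 = \eta - 1$.

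Because $\psi$ is nonnegative on $\mathbb{N}$, Proposition~\ref{E quan} applies directly and yields
\[
\dim_{\rm H} E_\psi \;=\; \frac{1}{1+\liminf_{n\to\infty} \psi(n)/n} \;=\; \frac{1}{1+(\eta-1)} \;=\; \frac{1}{\eta},
\]
which is exactly the asserted identity. I do not anticipate a genuine obstacle here: the argument is purely a change of variable from $\phi$ to $\psi$, the set equality is a trivial case split on the sign of $\phi(n)-n$, and the liminf computation is routine, with the only minor subtlety being the boundary case $\eta=1$ handled above. An alternative, if one prefers not to invoke Proposition~\ref{E quan} as a black box, is to repeat the Cantor-set construction and upper-bound covering argument used in its proof with $\phi(n)$ replaced by $\phi(n)-n$ throughout; the very same estimates go through verbatim because only the nonnegativity and the value of $\liminf$ of the exponent function enter the calculation.
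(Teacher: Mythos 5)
Your proof is correct and rests on the same core reduction as the paper's: substitute $\phi(n)-n$ for $\phi(n)$ in Proposition \ref{E quan}. The difference lies in how the two arguments handle the fact that $\phi(n)-n$ may be negative or zero. The paper splits into cases according to the signs of $\liminf_{n\to\infty}(\phi(n)-n)$ and $\limsup_{n\to\infty}(\phi(n)-n)$: in the favourable case it applies Proposition \ref{E quan} after the substitution, in the degenerate cases it observes that the set is all of $[0,1)$, and in the mixed case (where $\phi(n)-n$ changes sign infinitely often) it merely asserts that the Cantor-like construction ``is valid'' with $\phi(n)-n$ in place of $\phi(n)$, without detail. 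Your truncation $\psi(n)=\max\{\phi(n)-n,0\}$ collapses all of these cases at once: the identity of the two ``infinitely often'' sets follows immediately from $\ell_n(x)\ge 0$, the computation $\liminf_{n\to\infty}\psi(n)/n=\eta-1$ is routine (including the boundary case $\eta=1$, which you treat correctly), and Proposition \ref{E quan} then applies verbatim to the genuinely nonnegative function $\psi$. This yields a tighter, more self-contained derivation that uses Proposition \ref{E quan} strictly as a black box, whereas the paper's mixed case implicitly asks the reader to re-enter the proof of that proposition.
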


\begin{proof}
Let $\liminf\limits_{n \to \infty} \phi(n)/n \geq 1$.
When $\liminf\limits_{n \to \infty} (\phi(n) -n) >0$, then $\phi(n) >n$ holds for sufficiently largr $n$ and hence the result is got via replacing $\phi(n)$ by $(\phi(n)-n)$ from Proposition \ref{E quan}. When $\liminf\limits_{n \to \infty} (\phi(n) -n) <0$, we know that $\phi(n) \leq n$ holds for infinitely many $n \in \mathbb{N}$ and hence $\liminf\limits_{n \to \infty} \phi(n)/n = 1$. Then $\left\{x \in [0,1):\ell_n(x) \geq \phi(n) -n \ i.o.\right\} = [0,1)$ by the definition of $\ell_n(x)$ and hence the desired result is true.
Now let $\liminf\limits_{n \to \infty} (\phi(n) -n) =0$. In this case, if $\limsup\limits_{n \to \infty} (\phi(n) -n) >0$, then $\phi(n) > n$ holds for infinitely many $n \in \mathbb{N}$ and hence the method of the construction of the Cantor-like set in Proposition \ref{E quan} replacing $\phi(n)$ by $(\phi(n)-n)$ is valid; if $\limsup\limits_{n \to \infty} (\phi(n) -n) \leq0$, then $\lim\limits_{n \to \infty} (\phi(n) -n) =0$ and hence $\liminf\limits_{n \to \infty} \phi(n)/n =1$. By the definition of $\ell_n(x)$, we know $\left\{x \in [0,1):\ell_n(x) \geq \phi(n) -n \ i.o.\right\} = [0,1)$ and that the the desired result is obtained.

\end{proof}

During the construction of the Cantor-like set $F$ in Lemma \ref{F}, we can obtain the following lemma by replacing $\phi(n)$ by $(\phi(n)-n)$ and using the approximation method for the $\beta$-shift in Section \ref{Approximation method}.
\begin{lemma}\label{F pian}
Let $\beta >1$ be a real number.
Assume that $\phi$ is a positive and nondecreasing function defined on $\mathbb{N}$ satisfying $\liminf\limits_{n \to \infty} \phi(n)/n \geq 1$. Then
\begin{equation*}
\dim_{\rm H} \left\{x \in [0,1): \limsup_{n \to \infty} \frac{n+\ell_n(x)}{\phi(n)} = 1 \right\} \geq \frac{1}{\liminf\limits_{n \to \infty} \phi(n)/n}.
\end{equation*}
\end{lemma}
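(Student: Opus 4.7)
The plan is to rerun the Cantor-set construction of Lemma \ref{F} (and the approximation step of Lemma \ref{F general}) with the test function $\phi(n)$ replaced throughout by $\psi(n) := \phi(n) - n$. Set $\eta := \liminf_{n\to\infty}\phi(n)/n$. The cases $\eta = 1$ and $\eta = +\infty$ give the bounds $1$ and $0$ respectively, both trivially satisfied, so one may assume $1 < \eta < \infty$. Then $\psi(n)/\phi(n) \to 1 - 1/\eta \in (0,1)$ along any subsequence realizing the liminf, so $\psi$ is eventually positive and unbounded, and a direct rescaling of the $\epsilon$-quantifiers shows
\[
\limsup_{n \to \infty} \frac{n + \ell_n(x)}{\phi(n)} = 1 \iff \limsup_{n \to \infty} \frac{\ell_n(x)}{\psi(n)} = 1.
\]
Hence the target set coincides with $F_\psi$ in the notation of Section 4, and the desired bound $\frac{1}{\eta} = \frac{1}{1+\liminf\psi(n)/n}$ is exactly the formula produced by Proposition \ref{F quan} applied to $\psi$ in place of $\phi$.

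The only technical concern is that $\psi$, unlike $\phi$, need not be monotone. In the proof of Lemma \ref{F}, monotonicity of $\phi$ was used only to bound $\ell_n(x) \leq \max\{\lfloor\phi(n_k)\rfloor,\lfloor\phi(n_{k-1})\rfloor\} \leq \phi(n)$ when $n_k \leq n < n_{k+1}$. I would select the subsequence $\{n_i\}_{i\geq 1}$ so that, in addition to $\phi(n_i)/n_i \to \eta$, $\psi(n_i)$ is strictly increasing and the lacunarity condition $(n_{i+1}-n_i)/\phi(n_i) \to 0$ holds (possible since $\phi(n_i) \to \infty$). With this $\{n_i\}$, the argument of Lemma \ref{F} transfers almost verbatim: build $\mathcal{I}_1$, $\mathcal{I}_2$ and the Cantor set $F$ by planting blocks of $\lfloor\psi(n_i)\rfloor$ zeros starting at position $n_i+1$, interleaved with the full-cylinder spacers supplied by Lemma \ref{full}. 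For $x \in F$ one has $\ell_{n_k}(x) = \lfloor\psi(n_k)\rfloor$, whence $\lim_k (n_k + \ell_{n_k}(x))/\phi(n_k) = 1$, while the lacunarity forces $\limsup_n (n + \ell_n(x))/\phi(n) \leq 1$, so $F \subset F_\psi$.

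For $\beta \in A_0$, I would next copy the Bernoulli-type distribution $\nu$ supported on $F$ from Lemma \ref{F} and carry out the case-by-case verification (the four cases (I)--(IV) of Step 3 there, together with the analogues of Lemmas \ref{DIE} and \ref{DIEDAI}) that $\nu(I_n) \leq C_\delta |I_n|^{1/\eta - \delta}$; every inequality survives the substitution $\phi \leftrightarrow \psi$ and the exponent $\frac{1}{1+\liminf\psi(n)/n}$ equals $\frac{1}{\eta}$. Proposition \ref{MMDP} then yields $\dim_{\rm H} F_\psi \geq 1/\eta$. For a general $\beta > 1$, I would invoke the approximation method of Section \ref{Approximation method}: pick $\beta^\prime \in A_0$ with $\beta^\prime < \beta$, intersect with the Cantor set $H_\beta^{\beta^\prime}$, push forward via the H\"older map $h$ of Proposition \ref{h}, which preserves $\ell_n$ because $\varepsilon(h(x),\beta^\prime) = \varepsilon(x,\beta)$, and let $\beta^\prime \to \beta^{-}$ using density of $A_0$ in $(1,\infty)$. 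I expect the main obstacle to be the mass-distribution bookkeeping in Step 3: because $\psi$ is not globally monotone, one has to make sure $\psi$ is only referenced at the planted indices $\{n_i\}$, but since $\nu$ is itself constructed using exactly these indices, this amounts to notational care rather than a new idea.
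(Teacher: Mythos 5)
Your overall strategy --- substitute $\psi(n)=\phi(n)-n$ for $\phi(n)$ in the Cantor construction of Lemma \ref{F}, verify the mass distribution estimates, and then pass from $\beta\in A_0$ to general $\beta$ via the H\"older map $h$ --- is exactly the route the paper takes (the paper's own proof is a two-line reduction to ``repeat Lemmas \ref{F} and \ref{F general} with $\phi(n)$ replaced by $\phi(n)-n$''). However, two of your specific steps do not survive scrutiny. First, the case $\eta=1$ is \emph{not} trivially satisfied: the claimed bound there is $\dim_{\rm H}\ge 1$, i.e.\ full dimension, which always requires an argument. It is an \emph{easy} argument --- since $\ell_n(x)/n\to 0$ for $\lambda$-a.e.\ $x$ by Proposition \ref{sup ln}, and $\limsup_n n/\phi(n)=1$, the identity $\frac{n+\ell_n(x)}{\phi(n)}=\frac{n}{\phi(n)}\bigl(1+\frac{\ell_n(x)}{n}\bigr)$ shows the target set has full Lebesgue measure (the paper instead exhibits a full-dimensional subset via Lemma \ref{F general} applied to $\log n$) --- but as written your proof simply omits this case.

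Second, the auxiliary condition you impose on the subsequence, namely $(n_{i+1}-n_i)/\phi(n_i)\to 0$, is inconsistent with the construction itself: even to plant a single block of $\lfloor\psi(n_i)\rfloor$ zeros one needs $n_{i+1}>n_i+\lfloor\psi(n_i)\rfloor+1$, and $\psi(n_i)\sim(\eta-1)n_i$ is comparable to $\phi(n_i)\sim\eta n_i$ when $\eta>1$; moreover the sparseness condition (\ref{tiaojian}), which drives all the measure estimates of Step 3, forces $n_{i+1}/n_i$ to be large, the opposite of lacunarity. Fortunately the conclusion you wanted from lacunarity, $\limsup_n (n+\ell_n(x))/\phi(n)\le 1$ for $x\in F$, holds without it: for $n_k\le n<n_{k+1}$ one has $\ell_n(x)\le\lfloor\psi(n_k)\rfloor$ (by your choice of $\psi(n_i)$ increasing) and $\phi(n)\ge(\eta-\varepsilon)n$ for large $n$, whence
\begin{equation*}
\frac{n+\ell_n(x)}{\phi(n)}\;\le\;\frac{n\bigl(1+\psi(n_k)/n_k\bigr)}{(\eta-\varepsilon)\,n}\;\longrightarrow\;\frac{\eta}{\eta-\varepsilon},
\end{equation*}
and letting $\varepsilon\to 0^+$ gives the bound. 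With the lacunarity requirement deleted and replaced by this estimate, and the $\eta=1$ case supplied, your argument matches the paper's intended proof.
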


\begin{proof}
When $\liminf\limits_{n \to \infty} \phi(n)/n >1$, we obtain that $\phi(n) >n$ holds for sufficiently largr $n$ and that $\liminf\limits_{n \to \infty} (\phi(n)-n) = +\infty$. Note that $\phi$ is nondecreasing,
so both the method of the construction of the Cantor-like set in the proof of Lemma \ref{F} by replacing $\phi(n)$ by $(\phi(n)-n)$ and the approximation method for the $\beta$-shift in Section \ref{Approximation method} are valid. Being similar to the proofs of Lemmas \ref{F} and \ref{F general}, we obtain the desired result.
When $\liminf\limits_{n \to \infty} \phi(n)/n =1$, i.e., $\limsup\limits_{n \to \infty} n/\phi(n) =1$. Since $\frac{n+\ell_n(x)}{\phi(n)} = \frac{n}{\phi(n)}\cdot(1+\frac{\ell_n(x)}{n})$, it is easy to see that
\begin{equation*}
\left\{x \in [0,1): \limsup_{n \to \infty} \frac{n+\ell_n(x)}{\phi(n)}= 1 \right\} \supseteq \left\{x \in [0,1): \lim_{n \to \infty} \frac{\ell_n(x)}{n} = 0 \right\}.
\end{equation*}
Note that
\begin{equation*}
\left\{x \in [0,1): \limsup_{n \to \infty} \frac{\ell_n(x)}{n} = 0 \right\} \supseteq \left\{x \in [0,1): \limsup_{n \to \infty} \frac{\ell_n(x)}{\log n} = 1 \right\},
\end{equation*}
by Lemma \ref{F general}, we obtain that
\begin{equation*}
\dim_{\rm H} \left\{x \in [0,1): \lim_{n \to \infty} \frac{n+\ell_n(x)}{\phi(n)} = 1 \right\} \geq 1.
\end{equation*}
\end{proof}

In view of Lemmas \ref{E} and \ref{F pian}, we have

\begin{proposition}\label{F pian2}
Let $\beta >1$ be a real number.
Assume that $\phi$ is a positive and nondecreasing function defined on $\mathbb{N}$ satisfying $\liminf\limits_{n \to \infty} \phi(n)/n \geq 1$. Then
\begin{equation*}
\dim_{\rm H}  \left\{x \in [0,1): \limsup_{n \to \infty} \frac{n+\ell_n(x)}{\phi(n)} = 1 \right\} = \frac{1}{\liminf\limits_{n \to \infty} \phi(n)/n}.
\end{equation*}
\end{proposition}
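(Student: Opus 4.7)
The plan is to combine the two lemmas just proved. Write $\eta := \liminf_{n\to\infty} \phi(n)/n$ and let $G_\phi$ denote the set in question. The lower bound $\dim_{\rm H} G_\phi \geq 1/\eta$ is exactly what Lemma \ref{F pian} supplies, so the only remaining work is the matching upper bound, which I plan to deduce from Lemma \ref{E} by exhibiting $G_\phi$ inside an ``infinitely often'' set of the type controlled there.

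For the upper bound the idea is simple: if $\limsup_{n\to\infty} (n+\ell_n(x))/\phi(n) = 1$, then for every $\delta \in (0,1)$ one has $\ell_n(x) \geq (1-\delta)\phi(n) - n$ for infinitely many $n$. I would set $\psi_\delta(n) := \max\{0,\,(1-\delta)\phi(n) - n\}$, which is a nonnegative function on $\mathbb{N}$, and the observation above yields the inclusion $G_\phi \subseteq E_{\psi_\delta}$. Since $t \mapsto \max\{0,t\}$ is continuous and nondecreasing it commutes with $\liminf$, so a direct computation gives
$$\liminf_{n\to\infty}\frac{\psi_\delta(n)}{n} \;=\; \max\bigl\{0,\,(1-\delta)\eta - 1\bigr\},$$
and Lemma \ref{E} then yields
$$\dim_{\rm H} G_\phi \;\leq\; \dim_{\rm H} E_{\psi_\delta} \;\leq\; \frac{1}{1 + \max\{0,\,(1-\delta)\eta - 1\}}.$$

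To finish I would split on whether $\eta>1$ or $\eta=1$. If $\eta>1$, I pick $\delta$ small enough that $(1-\delta)\eta>1$, so that the bound simplifies to $1/((1-\delta)\eta)$; letting $\delta\to 0^{+}$ produces the required $1/\eta$. If $\eta=1$, then $1/\eta = 1$, which is trivially an upper bound since $G_\phi \subseteq [0,1)$, and Lemma \ref{F pian} already delivers $\dim_{\rm H} G_\phi \geq 1$, closing this case as well. There is no serious obstacle here, as both lemmas do the heavy lifting; the only point requiring a moment's care is the clipping at zero that ensures $\psi_\delta$ meets the nonnegativity hypothesis of Lemma \ref{E}, together with the trivial verification that this clipping does not affect the relevant $\liminf$ once $(1-\delta)\eta>1$.
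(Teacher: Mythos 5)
Your proposal is correct and follows essentially the same route as the paper, which obtains this proposition directly by combining Lemma \ref{F pian} (lower bound) with Lemma \ref{E} (upper bound); you have merely written out the details of the upper-bound step (the clipping $\psi_\delta(n)=\max\{0,(1-\delta)\phi(n)-n\}$ and the limit $\delta\to 0^{+}$) that the paper leaves implicit, and these details check out.
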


\section{The proof of Theorem \ref{inf dimension}}

In this section, we will prove Theorem \ref{inf dimension}.

\begin{proof}[Proof of Theorem \ref{inf dimension}]

(i) If $\eta =0$, we have $\limsup\limits_{n \to \infty} n /\phi(n) = +\infty$. By the definition of $\ell_n(x)$ defined in Section 3,
we deduce that
\[
\limsup_{n \to \infty} \frac{n + l_n(x)}{\phi(n)} \geq \limsup_{n \to \infty} \frac{n }{\phi(n)} = +\infty
\]
for any $x \in [0,1)$. It follows from the second inequality of (\ref{jie}) that
\begin{equation}\label{jie 2}
\limsup_{n \to \infty} \frac{n + \ell_n(x)}{\phi(n)} \leq - \liminf_{n \to \infty} \frac{1}{\phi(n)}\log_\beta (x -\omega_n(x))
\end{equation}
for any $x \in [0,1)$. Thus,
\[
\liminf_{n \to \infty} \frac{1}{\phi(n)}\log_\beta (x -\omega_n(x)) = - \infty
\]
for any $x \in [0,1)$. If $0<\eta <1$, by the definition of $\ell_n(x)$, we obtain that
\[
\limsup_{n \to \infty} \frac{n + \ell_n(x)}{\phi(n)} \geq \limsup_{n \to \infty} \frac{n}{\phi(n)} = \frac{1}{\liminf\limits_{n \to \infty} \phi(n)/n}= \frac{1}{\eta} > 1
\]
for any $x \in [0,1)$. In view of (\ref{jie 2}), we deduce that
\[
\liminf_{n \to \infty} \frac{1}{\phi(n)}\log_\beta (x -\omega_n(x)) < - 1\ \ \text{for any $x \in [0,1)$}.
\]
Therefore, the set
\begin{equation*}
B_\phi = \left\{x \in [0,1): \liminf_{n \to \infty} \frac{1}{\phi(n)}\log_\beta (x -\omega_n(x)) = -1 \right\}
\end{equation*}
is empty.

(ii) Let $\eta \geq 1$. So $\phi(n) \to \infty$ as $n \to \infty$ and then it follows from (\ref{jie}) that
\begin{equation*}
\limsup_{n \to \infty} \frac{n + \ell_n(x)}{\phi(n)} =- \liminf_{n \to \infty} \frac{1}{\phi(n)}\log_\beta (x -\omega_n(x))
\end{equation*}
for any $x \in [0,1)$. Note that $\phi$ is nondecreasing, by Proposition \ref{F pian2}, we obtain  $\dim_{\rm H} A_\phi = 1/\eta$.
\end{proof}

\section{Applications}

\subsection{The orbits of real numbers under $\beta$-transformation}

For any $x \in [0,1)$, we say that the sequence $x,T_\beta x, \cdots, T_\beta^nx, \cdots$ is \emph{the orbit of $x$ under $T_\beta$}.
In 2011, Li and Chen \cite{lesL.C11} studied the topological properties of the orbits of $x \in [0,1)$ under $T_\beta$ by considering the set
\begin{equation*}
O_\beta = \left\{x \in [0,1): \text{the orbit of}\ x\ \text{under}\ T_\beta\ \text{is dense in} \ [0,1] \right\}.
\end{equation*}
They proved that $O_\beta$ is uncountable, dense, of the second category and has full Lebesgue measure, while its complementary set $O_\beta^c$  is uncountable, dense, of the first category and has full Hausdorff dimension. They also showed that the $\beta$-transformation $T_\beta$ is chaotic in the sense of both Li-Yorke and Devaney. Recently, Ban and Li \cite{lesBL14} studied the multifractal spectra for the recurrence rate of the first return time of $\beta$-transformation, including the cases returning to the ball and cylinder.

For any $n \in \mathbb{N}$, by the definitions of $\omega_n(x)$ and $T_\beta$ in Section 1, we obtain
\begin{equation}\label{orbit}
x - \omega_n(x) = \frac{T_\beta^nx}{\beta^n}.
\end{equation}
Combing this with (\ref{jie}), we have
\begin{equation}\label{Orbit}
\frac{1}{\beta^{\ell_n(x)+1}} \leq  T_\beta^nx \leq \frac{1}{\beta^{\ell_n(x)}}.
\end{equation}
As a consequence of Proposition \ref{sup ln}, we give a quantitative formula for the growth speed of the orbit of $x $ under $T_\beta$.

\begin{theorem}\label{orbit theorem}
Let $\beta >1$ be a real number. Then for $\lambda$-almost all $x \in [0,1)$,
\[
\liminf_{n \to \infty} \frac{\log T_\beta^nx}{\log n} = -1.
\]
Moreover, for any real number $x \in [0,1)$ whose $\beta$-expansion is infinite, we have
\[
\limsup_{n \to \infty} \frac{\log T_\beta^nx}{\log n} = 0.
\]
\end{theorem}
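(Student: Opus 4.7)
The plan is to deduce Theorem \ref{orbit theorem} directly from the two-sided estimate (\ref{Orbit}) combined with Proposition \ref{sup ln}. First I would observe that taking $\log_\beta$ in the chain $\beta^{-(\ell_n(x)+1)} \leq T_\beta^n x \leq \beta^{-\ell_n(x)}$ and dividing by $\log_\beta n$ (which is positive for $n \geq 2$) yields the sandwich
\[
-\frac{\ell_n(x)+1}{\log_\beta n} \;\leq\; \frac{\log_\beta T_\beta^n x}{\log_\beta n} \;\leq\; -\frac{\ell_n(x)}{\log_\beta n},
\]
valid for every $x \in [0,1)$ and $n \geq 2$. Since the ratio $\log_\beta T_\beta^n x / \log_\beta n$ equals $\log T_\beta^n x / \log n$ by change of base, and since $1/\log_\beta n \to 0$, passing to liminf and to limsup in the sandwich gives the two clean identities
\[
\liminf_{n \to \infty}\frac{\log T_\beta^n x}{\log n} = -\limsup_{n \to \infty}\frac{\ell_n(x)}{\log_\beta n}, \qquad \limsup_{n \to \infty}\frac{\log T_\beta^n x}{\log n} = -\liminf_{n \to \infty}\frac{\ell_n(x)}{\log_\beta n}.
\]

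Once these identities are in place, the theorem is essentially a direct translation of Proposition \ref{sup ln}. The $\limsup$ half of that proposition asserts that $\limsup_n \ell_n(x)/\log_\beta n = 1$ for $\lambda$-almost every $x \in [0,1)$, and feeding this into the first identity above produces the almost-sure statement $\liminf_n \log T_\beta^n x / \log n = -1$. Similarly, the $\liminf$ half of Proposition \ref{sup ln} says $\liminf_n \ell_n(x)/\log_\beta n = 0$ for any $x$ with infinite $\beta$-expansion, which through the second identity yields $\limsup_n \log T_\beta^n x / \log n = 0$ on that (co-countable) set.

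There is no real obstacle here: the theorem is a repackaging of Proposition \ref{sup ln} through (\ref{Orbit}). The only points requiring a bit of care are making sure the trivial $+1$ term in the left end of the sandwich vanishes after division by $\log_\beta n$, and matching the correct half of Proposition \ref{sup ln} (limsup versus liminf) with the correct conclusion, taking into account the minus sign that the inequalities (\ref{Orbit}) introduce.
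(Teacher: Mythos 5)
Your proposal is correct and follows exactly the route the paper intends: the paper derives Theorem \ref{orbit theorem} as an immediate consequence of the two-sided bound (\ref{Orbit}) together with Proposition \ref{sup ln}, which is precisely your sandwich argument with the sign flip and the vanishing $+1$ term handled. No discrepancies to report.
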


By Proposition \ref{F quan} and the inequalities (\ref{Orbit}), we obtain the Hausdorff dimension of exceptional set according to the above metric result.

\begin{theorem}\label{th1 ex}
Let $\beta >1$ be a real number and $\phi$ be a positive and nondecreasing function defined on $\mathbb{N}$ with $\phi(n) \to \infty$ as $n \to \infty$. Denote $\eta:= \liminf\limits_{n \to \infty} \phi(n)/n$. Then
\begin{equation*}
\dim_{\rm H} \left\{x \in [0,1): \liminf_{n \to \infty} \frac{\log T_\beta^nx}{\phi(n)} = -1 \right\} = \frac{1}{1+ \eta/\log \beta}.
\end{equation*}
\end{theorem}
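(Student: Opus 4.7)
The plan is to reduce Theorem \ref{th1 ex} to Proposition \ref{F quan} via the sandwich inequality (\ref{Orbit}). The point is that, up to a rescaling of the gauge $\phi$ by the factor $\log\beta$, the set in question is exactly an instance of $F_\psi$, so no new dimension computation is needed; one only has to carefully translate the orbit statement into a statement about $\ell_n(x)$.

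Concretely, I would first take logarithms in (\ref{Orbit}) to obtain
\[
-(\ell_n(x)+1)\log\beta \;\le\; \log T_\beta^n x \;\le\; -\ell_n(x)\log\beta.
\]
Since $\phi$ is positive with $\phi(n)\to\infty$, dividing by $\phi(n)$ lets the additive ``$+1$'' term vanish in the limit, and passing to the $\liminf$ gives, for every $x\in[0,1)$,
\[
\liminf_{n\to\infty}\frac{\log T_\beta^n x}{\phi(n)} \;=\; -\log\beta \cdot \limsup_{n\to\infty}\frac{\ell_n(x)}{\phi(n)}.
\]
Consequently the condition $\liminf_{n\to\infty}\tfrac{\log T_\beta^n x}{\phi(n)}=-1$ is equivalent to $\limsup_{n\to\infty}\tfrac{\ell_n(x)}{\psi(n)}=1$, where $\psi(n):=\phi(n)/\log\beta$. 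So the set under consideration coincides with $F_\psi$ in the notation of Section~4.

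Then I would invoke Proposition \ref{F quan} applied to $\psi$. The function $\psi$ inherits from $\phi$ the three hypotheses of that proposition: it is positive (here $\log\beta>0$ since $\beta>1$), nondecreasing, and tends to infinity. Proposition \ref{F quan} therefore yields
\[
\dim_{\rm H} F_\psi \;=\; \frac{1}{1+\liminf\limits_{n\to\infty}\psi(n)/n} \;=\; \frac{1}{1+\eta/\log\beta},
\]
which is precisely the claimed formula.

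I do not expect any real obstacle: all of the dimensional machinery (the Cantor-like construction, the modified mass distribution principle, the approximation method for the $\beta$-shift) is already encapsulated in Proposition \ref{F quan}. The only point deserving minor care is the exactness of the $\liminf$--$\limsup$ correspondence between $\log T_\beta^n x/\phi(n)$ and $\ell_n(x)/\phi(n)$; the assumption $\phi(n)\to\infty$ is exactly what makes the error term $\log\beta/\phi(n)$ from (\ref{Orbit}) negligible, so the equivalence is clean and the reduction to Proposition \ref{F quan} is immediate.
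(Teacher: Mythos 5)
Your proposal is correct and follows exactly the paper's own route: the paper derives Theorem \ref{th1 ex} directly from Proposition \ref{F quan} combined with the inequalities (\ref{Orbit}), which is precisely the reduction you carry out (your version just writes out the rescaling $\psi=\phi/\log\beta$ and the $\liminf$--$\limsup$ sandwich explicitly, details the paper leaves implicit). No gaps.
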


As an direct application of Theorem \ref{th1 ex}, we obtain the Hausdorff dimensions of the level sets.

\begin{corollary}\label{6}
Let $\beta >1$ be a real number. Then for any $\alpha \geq 0$,
\begin{equation*}
\dim_{\rm H} \left\{x \in [0,1): \liminf_{n \to \infty} \frac{\log T_\beta^nx}{\log n} = -\alpha \right\} = 1.
\end{equation*}
\end{corollary}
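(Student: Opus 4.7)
The plan is to split into the cases $\alpha > 0$ and $\alpha = 0$, since Theorem \ref{th1 ex} requires $\phi(n) \to \infty$ and hence does not apply directly when $\alpha = 0$.

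For the case $\alpha > 0$, I would fix an integer $N$ with $\alpha \log N \geq 1$ and set $\phi(n) = \alpha \log N$ for $n < N$ and $\phi(n) = \alpha \log n$ for $n \geq N$. Then $\phi$ is positive on $\mathbb{N}$, nondecreasing, tends to infinity, and satisfies $\eta := \liminf_{n \to \infty} \phi(n)/n = 0$. Since modifying $\phi$ at finitely many values does not alter the liminf, one has
$\liminf_{n \to \infty} \log T_\beta^n x / \phi(n) = (1/\alpha) \liminf_{n \to \infty} \log T_\beta^n x / \log n$,
so the set in the corollary coincides with the set in Theorem \ref{th1 ex} for this choice of $\phi$. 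The formula there gives the Hausdorff dimension $1/(1 + 0/\log \beta) = 1$.

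For the case $\alpha = 0$, I would first observe that $T_\beta^n x \in [0,1)$ forces $\log T_\beta^n x \leq 0$, while $\log n > 0$ for $n \geq 2$; thus the ratio $\log T_\beta^n x / \log n$ is nonpositive, so its liminf equals $0$ if and only if the whole sequence tends to $0$. By the bounds (\ref{Orbit}), this is equivalent to $\ell_n(x) = o(\log n)$. I would then pick $\phi(n) = \sqrt{\log(n+1)}$, a positive nondecreasing function with $\phi(n) \to \infty$ and $\phi(n)/\log n \to 0$. For any $x \in F_\phi$, the condition $\limsup \ell_n(x)/\phi(n) = 1$ implies $\ell_n(x) \leq 2\phi(n)$ for all large $n$, hence $\ell_n(x)/\log n \to 0$; so $F_\phi$ is contained in the target set. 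Proposition \ref{F quan} then yields $\dim_{\rm H} F_\phi = 1/(1 + 0) = 1$, and therefore the target set has full dimension.

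The main obstacle is precisely the $\alpha = 0$ case: Theorem \ref{th1 ex} is not directly applicable, so one must exhibit a large subset of the target set. The key observation is that \emph{any} $\phi$ growing to infinity but strictly slower than $\log n$ produces an $F_\phi$ of full dimension that sits inside the target set, so it suffices to pick such a $\phi$ and invoke Proposition \ref{F quan}.
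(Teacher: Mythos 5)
Your argument is correct and follows essentially the same route as the paper: for $\alpha>0$ both apply Theorem \ref{th1 ex} with $\phi(n)=\alpha\log n$ (your adjustment of finitely many values to keep $\phi$ positive is a minor point the paper glosses over), and for $\alpha=0$ both exhibit a full-dimensional subset attached to a function growing to infinity but slower than $\log n$. The only cosmetic difference is that the paper takes $\phi(n)=\log\log n$ and cites Theorem \ref{th1 ex} again, whereas you take $\phi(n)=\sqrt{\log(n+1)}$ and invoke Proposition \ref{F quan} directly via the bounds (\ref{Orbit}); these are the same mechanism one layer apart.
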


\begin{proof}
For $\alpha > 0$, taking $\phi(n) = \alpha \log n $, then $\phi(n)$ is positive and nondecreasing with $\phi(n) \to \infty$ as $n \to \infty$. Applying these $\phi(n)$ to Theorem \ref{th1 ex}, note that $\liminf\limits_{n \to \infty} \phi(n)/n =0$, we obtain the desired result. Now let $\alpha = 0$. Notice that
\begin{equation*}
\left\{x \in [0,1): \liminf_{n \to \infty} \frac{\log T_\beta^nx}{\log n} =0 \right\} \supseteq
\left\{x \in [0,1): \liminf_{n \to \infty} \frac{\log T_\beta^nx}{\log\log n} = -1 \right\}
\end{equation*}
and the later set has full Hausdorff dimension by Theorem \ref{th1 ex}, so we complete the proof.
\end{proof}

Application of Corollary \ref{6} indicates that the set of points such that the metric result in Theorem \ref{orbit theorem} does not hold has full Hausdorff dimension.

\subsection{The shrinking target type problem for $\beta$-transformations}

The typical shrinking target problem with the given target aims at investigating the Hausdorff dimensions of sets of points whose orbits are close to some previously chosen point (see Hill and Velani \cite{lesHV95}). In fact, Bugeaud and Wang \cite{lesB.W14} has given a more general result about this problem for $\beta$-expansions. Tan and Wang \cite{lesTW11} concerned the quantitative recurrence properties of the beta dynamical system $([0,1],T_\beta)$ for general $\beta > 1$ and the Hausdorff dimension of the set of points with the prescribed recurrence rate is determined. Liao and Seuret \cite{lesLS13} studied the
shrinking target problem for expanding Markov maps with a finite partition.
Recently, Seuret and Wang \cite{lesSW15} investigated a quantitative version of Poincar\'{e}'s recurrence theorem in a conformal iterated function system, which includes the dynamical systems of $p$-adic expansions, continued fraction expansion, as well as some dynamical systems defined on fractal sets. For more applications about the shrinking target problem, see \cite{lesKim07, lesL.P.W.W14, lesLWWX14, lesLW16}.

By Proposition \ref{E quan} and the inequalities (\ref{Orbit}), we have the following Shrinking target problem for $\beta$-transformations which is a special case of Bugeaud and Wang \cite[Theorem 1.6]{lesB.W14}.

\begin{theorem}[Bugeaud and Wang \cite{lesB.W14}]
Let $\beta>1$ be a real number and $\phi$ be a positive function defined on $\mathbb{N}$. Then
\[
\dim_{\rm H} \left\{x \in [0,1): T_\beta^nx  \leq \beta^{-\phi(n)} \ i.o.\right\}= \frac{1}{1+\liminf\limits_{n \to \infty} \phi(n)/n}.
\]
\end{theorem}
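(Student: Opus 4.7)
The plan is to reduce the shrinking target set to a run-length set $E_{\psi}$ already handled by Proposition \ref{E quan}. The idea is that, by the inequalities (\ref{Orbit}), the condition $T_\beta^n x \leq \beta^{-\phi(n)}$ is essentially equivalent, up to a shift of $1$ in the exponent, to $\ell_n(x) \geq \phi(n)$, and a shift by a bounded amount does not affect $\liminf_{n\to\infty}\phi(n)/n$.

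More precisely, set $S_\phi := \{x \in [0,1): T_\beta^n x \leq \beta^{-\phi(n)} \ i.o.\}$. First, if $\ell_n(x) \geq \phi(n)$, then by the upper bound in (\ref{Orbit}), $T_\beta^n x \leq \beta^{-\ell_n(x)} \leq \beta^{-\phi(n)}$, which gives
\[
E_\phi \subseteq S_\phi.
\]
Conversely, if $T_\beta^n x \leq \beta^{-\phi(n)}$, then the lower bound in (\ref{Orbit}) yields $\beta^{-\ell_n(x)-1} \leq \beta^{-\phi(n)}$, hence $\ell_n(x) \geq \phi(n) - 1$, so that
\[
S_\phi \subseteq E_{\phi - 1}.
\]

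Both inclusions together give $E_\phi \subseteq S_\phi \subseteq E_{\phi-1}$. Applying Proposition \ref{E quan} to the bracketing sets and noting that $\liminf_{n\to\infty}(\phi(n)-1)/n = \liminf_{n\to\infty}\phi(n)/n$, the Hausdorff dimension of $S_\phi$ is sandwiched between two equal quantities, yielding
\[
\dim_{\rm H} S_\phi = \frac{1}{1+\liminf\limits_{n\to\infty}\phi(n)/n}.
\]

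There is essentially no obstacle; the only minor point is to check that Proposition \ref{E quan} applies when $\phi - 1$ may fail to be nonnegative at small $n$, but this is immaterial because the ``$i.o.$'' quantifier depends only on the tail behavior of $\phi$, and one may truncate or replace $\phi-1$ by $\max\{\phi-1,0\}$ without changing either $S_\phi$ up to finitely many $n$ or the $\liminf$. Thus the proof is a short two-sided sandwich combining (\ref{Orbit}) with Proposition \ref{E quan}, reflecting the fact that the shrinking target condition on the orbit $T_\beta^n x$ and the run-length condition on $\ell_n(x)$ differ only by an additive constant in the decay exponent.
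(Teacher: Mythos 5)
Your proposal is correct and is essentially identical to the paper's own proof: both use the two-sided bound (\ref{Orbit}) to sandwich the shrinking target set between $E_\phi$ and $E_{\phi-1}$ and then apply Proposition \ref{E quan}, noting the shift by $1$ does not change the liminf.
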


\begin{proof}
It follows from (\ref{Orbit}) that
\[
\left\{x \in [0,1): \ell_n(x) \geq \phi(n) \ i.o.\right\} \subseteq \left\{x \in [0,1): T_\beta^nx  \leq \beta^{-\phi(n)} \ i.o.\right\} \subseteq \left\{x \in [0,1): \ell_n(x) \geq \phi(n)-1 \ i.o.\right\}.
\]
In view of Proposition \ref{E quan}, we know both left-hand and right-hand sets are of Hausdorff dimension $1/(1+\liminf\limits_{n \to \infty} \phi(n)/n)$ and hence we complete the proof.
\end{proof}

\subsection{The Diophantine-type problem for $\beta$-expansions}

Notice that the classical Diophantine questions concern the dimension of the set
\[
\left\{x \in [0,1): \left|x- p/q\right| \leq  q^{-2\tau}\ \text{for infinitely many couples } (p,q) =1 \right\},
\]
the following is interpreted as a Diophantine approximation problem of $\beta$-expansions.
In fact, in view of (\ref{orbit}), this problem can also be viewed as a shrinking target problem for $\beta$-transformations.

\begin{theorem}[Bugeaud and Wang \cite{lesB.W14}]
Let $\beta >1$ be a real number and $\phi$ be a positive function defined on $\mathbb{N}$.
Denote $\eta = \liminf\limits_{n \to \infty} \phi(n)/n $ and
\begin{equation*}
C_{\phi}=\left\{x \in [0,1): x -\omega_n(x) \leq  \beta^{-\phi(n)} \ i.o.\right\}
\end{equation*}
Then\\
(i) If $0 \leq \eta < 1$, then $\dim_{\rm H} B_\phi =1$.\\
(ii) If $\eta \geq 1$, then $\dim_{\rm H} B_\phi =1/\eta$.
\end{theorem}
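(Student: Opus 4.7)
The plan is to reduce the theorem to Proposition~\ref{E 2} by turning the Diophantine condition $x-\omega_n(x)\le \beta^{-\phi(n)}$ into a statement about the zero-run lengths $\ell_n(x)$. The fundamental input is the two-sided estimate (\ref{jie}), namely $\beta^{-(n+\ell_n(x)+1)}\le x-\omega_n(x)\le \beta^{-(n+\ell_n(x))}$. Comparing exponents, $x-\omega_n(x)\le\beta^{-\phi(n)}$ implies $\ell_n(x)\ge \phi(n)-n-1$, while $\ell_n(x)\ge \phi(n)-n$ conversely forces $x-\omega_n(x)\le \beta^{-\phi(n)}$. Taking these ``infinitely often'' yields the sandwich
\begin{equation*}
\left\{x\in[0,1):\ell_n(x)\ge\phi(n)-n\ i.o.\right\}\ \subseteq\ C_\phi\ \subseteq\ \left\{x\in[0,1):\ell_n(x)\ge\phi(n)-n-1\ i.o.\right\},
\end{equation*}
so it suffices to compute the Hausdorff dimension of the outer two sets.

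For part (i), if $\eta=\liminf_{n\to\infty}\phi(n)/n<1$, then there is a subsequence $\{n_k\}$ along which $\phi(n_k)<n_k$. Since $\ell_n(x)\ge 0$ for every $x$, the inequality $\ell_{n_k}(x)\ge \phi(n_k)-n_k$ is then automatic for every $x\in[0,1)$ and every $k$. Hence the left-hand set in the sandwich equals the whole unit interval, which forces $C_\phi=[0,1)$ and $\dim_{\rm H} C_\phi=1$.

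For part (ii), assume $\eta\ge 1$. Both auxiliary functions $\phi$ and $\phi-1$ are nonnegative for large $n$ and satisfy $\liminf_{n\to\infty}\phi(n)/n=\liminf_{n\to\infty}(\phi(n)-1)/n=\eta\ge 1$, since an additive $O(1)$ shift does not affect the liminf of the ratio. Proposition~\ref{E 2} therefore applies to each, giving
\begin{equation*}
\dim_{\rm H}\{\ell_n(x)\ge\phi(n)-n\ i.o.\}\ =\ \dim_{\rm H}\{\ell_n(x)\ge(\phi(n)-1)-n\ i.o.\}\ =\ \frac{1}{\eta}.
\end{equation*}
Sandwiching, $\dim_{\rm H} C_\phi=1/\eta$, as claimed. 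The essential dimensional content of the theorem was already established in Section~4 through the Cantor-like construction underlying Proposition~\ref{E 2}; the only remaining obstacle is bookkeeping the integer shifts in the exponents, which is harmless in the liminf ratio.
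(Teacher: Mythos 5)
Your proposal is correct and follows essentially the same route as the paper: the sandwich of $C_\phi$ between the two $\ell_n$-sets via (\ref{jie}), the observation that the inner set is all of $[0,1)$ when $\eta<1$, and the application of Proposition \ref{E 2} when $\eta\geq 1$. Your explicit treatment of the upper bound by applying Proposition \ref{E 2} to $\phi-1$ (noting the liminf is unchanged) is a detail the paper leaves implicit, but it is the same argument.
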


\begin{proof}
In view of (\ref{jie}), we deduce that
\[
\left\{x \in [0,1):\ell_n(x)  \geq \phi(n) -n\ i.o.\right\} \subseteq C_\phi \subseteq \left\{x \in [0,1):\ell_n(x)  \geq \phi(n) -n-1 \ i.o.\right\}.
\]
When $0 \leq \eta < 1$, we obtain $\phi(n) -n \leq 0$ for infinitely many $n \in \mathbb{N}$. Thus, by the definition of $\ell_n(x)$, the set of points $x$ such that $\ell_n(x)  \geq \phi(n) -n$ for infinitely many $n$ is the unit interval $[0,1)$
and hence that $\dim_{\rm H} B_\phi =1$. When $\eta \geq 1$, it follows from Proposition \ref{E 2} that $\dim_{\rm H} B_\phi =1/\eta$.
\end{proof}


\subsection{The run-length function}
Recall that
\begin{equation*}
r_n(x) = \sup\big\{k \geq 0: \varepsilon_{i+1}(x)= \cdots = \varepsilon_{i+k}(x) =0\ \text{for some}\  0 \leq i \leq n-k\big\},
\end{equation*}
i.e., the maximal length of consecutive zeros in the first $n$ digits of the $\beta$-expansion of $x$. Let
\[
G_\phi = \left\{x \in [0,1): \limsup_{n \to \infty} \frac{r_n(x)}{\phi(n)} = 1 \right\},
\]
where $\phi$ is a positive function defined on $\mathbb{N}$. Now let $\mathcal{H}$ denote the set of positive and nondecreasing functions defined on $\mathbb{N}$ satisfying $\phi(n) \to \infty$ as $n \to \infty$ and
\begin{equation}\label{condition}
 \lim_{n \to \infty} \frac{\phi(n +\phi(n))}{\phi(n)} =1.
\end{equation}
It is noting that the following examples of $\phi$ are the typical elements of $\mathcal{H}$.
\begin{itemize}
  \item $\phi(n) = \alpha n^\gamma$ with $\alpha >0$ and $0<\gamma<1$.
  \item $\phi(n) =  \alpha(\log n)^\gamma$ with $\alpha >0$ and $\gamma>0$.
  \item $\phi(n) = \alpha n/(\log n)^\gamma$ with $\alpha >0$ and $\gamma>0$.
\end{itemize}

The next two lemmas are the basic properties of the function in $\mathcal{H}$.

\begin{lemma}\label{condition equivalent}
Let $\phi \in \mathcal{H}$. Then
\[
\lim_{n \to \infty} \frac{\phi(n +\delta\phi(n))}{\phi(n)} =1
\]
for any integer $\delta > 0$.
\end{lemma}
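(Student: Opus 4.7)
My plan is to argue by induction on $\delta$. The base case $\delta = 1$ is nothing but the defining condition (\ref{condition}) of the class $\mathcal{H}$, so no work is needed there. For the inductive step, assuming the claim holds for $\delta - 1$, I will introduce the auxiliary quantity
\[
m_n := n + (\delta-1)\phi(n)
\]
so that $n + \delta \phi(n) = m_n + \phi(n)$. Since $\phi$ is nondecreasing and $m_n \geq n$, we have $\phi(m_n) \geq \phi(n)$, which yields the squeeze
\[
\phi(m_n) \leq \phi(m_n + \phi(n)) \leq \phi(m_n + \phi(m_n)).
\]
Dividing by $\phi(n)$ rewrites this as
\[
\frac{\phi(m_n)}{\phi(n)} \leq \frac{\phi(n + \delta \phi(n))}{\phi(n)} \leq \frac{\phi(m_n + \phi(m_n))}{\phi(m_n)} \cdot \frac{\phi(m_n)}{\phi(n)}.
\]

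Now the inductive hypothesis applied with $\delta - 1$ gives $\phi(m_n)/\phi(n) = \phi(n + (\delta-1)\phi(n))/\phi(n) \to 1$, and since $m_n \geq n \to \infty$, the base case (\ref{condition}) applied along the sequence $m_n$ gives $\phi(m_n + \phi(m_n))/\phi(m_n) \to 1$. Both the left and right sides of the squeeze therefore tend to $1$, and the middle ratio is forced to $1$ as well, completing the induction.

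The only technical point to watch is that $\phi$ is not a priori integer-valued, so the composition $\phi(n + \phi(n))$ carries an implicit floor, exactly as in (\ref{condition}). Provided the same convention is applied to $\phi(m_n + \phi(n))$ and $\phi(m_n + \phi(m_n))$, the monotonicity of $\phi$ absorbs the bounded-size integer corrections, and they do not affect the ratios. I do not foresee a serious obstacle: the argument is a clean two-sided bound driven by the monotonicity of $\phi$ and a single application each of the inductive hypothesis and of the base case shifted to start from $m_n$.
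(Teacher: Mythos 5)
Your argument is correct and rests on the same core inequality as the paper's proof, namely that monotonicity of $\phi$ gives $\phi\big(n+(j+1)\phi(n)\big)\le\phi\big(n+j\phi(n)+\phi(n+j\phi(n))\big)$, so that the one-step condition (\ref{condition}) can be applied at the shifted points $n+j\phi(n)$; the paper merely packages the $\delta$ steps as a telescoping product bounded by $(1+\varepsilon)^{\delta}$ rather than as an induction with a squeeze. The floor/integrality caveat you raise is present in the paper's version as well and is harmless for the same reason you give.
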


\begin{proof}
Let $\phi \in \mathcal{H}$. Then limit (\ref{condition}) holds. For any $\varepsilon >0$, there exists $N:=N_\varepsilon >0$ such that for all $n \geq N_\varepsilon$, we have
\begin{equation}\label{limphi}
\phi(n + \phi(n)) \leq (1+\varepsilon)\phi(n).
\end{equation}
For any integer $j > 0$, by the monotonic property of $\phi$, we have $\phi(n + \phi(n)) \geq \phi(n)$ and hence that
\[
\phi\big(n + j\phi(n) + \phi(n + j\phi(n))\big) \geq \phi\big(n + (j+1)\phi(n)\big).
\]
On the other hand, in view of (\ref{limphi}), we obtain that
\[
\phi\big(n + j\phi(n) + \phi(n + j\phi(n))\big) \leq (1+\varepsilon)\phi\big(n + j\phi(n)\big)
\]
by regarding $(n + j\phi(n))$ as a whole. Therefore, for any integer $j$ and $n \geq N$, we have
\[
\phi\big(n + (j+1)\phi(n)\big) \leq (1+\varepsilon)\phi\big(n + j\phi(n)\big).
\]
For any integer $\delta >0$ and $n \geq N$, we obtain that
\[
1 \leq \frac{\phi(n +\delta\phi(n))}{\phi(n)} = \prod_{j=0}^{\delta-1} \frac{\phi\big(n + (j+1)\phi(n)\big)}{\phi\big(n + j\phi(n)\big)} \leq (1+\varepsilon)^\delta
\]
holds for any $\varepsilon >0$, which implies the desired result.
\end{proof}

\begin{lemma}\label{infequivalent}
Let $\phi \in \mathcal{H}$. Then
$\liminf\limits_{n \to \infty} \phi(n)/n=0$.
\end{lemma}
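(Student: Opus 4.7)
The plan is to argue by contradiction directly from the defining condition~(\ref{condition}) of $\mathcal{H}$. Suppose, contrary to the claim, that
\[
c := \liminf_{n \to \infty} \frac{\phi(n)}{n} > 0.
\]
The goal will be to show that this forces the ratio $\phi(n+\phi(n))/\phi(n)$ to be bounded below by a quantity strictly greater than $1$ along a suitable subsequence, contradicting $\phi \in \mathcal{H}$.

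First, I would fix a small $\varepsilon \in (0, c^2/(1+c))$ (chosen so that $(c-\varepsilon)(1+c)/c > 1$), and use the definition of the liminf to find an integer $N_\varepsilon$ such that $\phi(m) \geq (c-\varepsilon)m$ for every $m \geq N_\varepsilon$. Next, I would extract a subsequence $\{n_k\}_{k \geq 1}$ along which $\phi(n_k)/n_k \to c$, and assume $n_k \geq N_\varepsilon$ for all $k$. Since $\phi(n_k) \geq (c-\varepsilon)n_k$, we have $n_k + \phi(n_k) \geq N_\varepsilon$ (in fact it tends to $\infty$), so applying the lower bound at $m = n_k + \phi(n_k)$ gives
\[
\phi\bigl(n_k + \phi(n_k)\bigr) \;\geq\; (c-\varepsilon)\bigl(n_k + \phi(n_k)\bigr).
\]

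Dividing through by $\phi(n_k)$ and using $n_k/\phi(n_k) \to 1/c$, I would obtain
\[
\liminf_{k \to \infty} \frac{\phi\bigl(n_k + \phi(n_k)\bigr)}{\phi(n_k)} \;\geq\; (c-\varepsilon)\left(\frac{1}{c} + 1\right) \;=\; \frac{(c-\varepsilon)(1+c)}{c} \;>\; 1,
\]
by the choice of $\varepsilon$. This contradicts condition~(\ref{condition}), which forces this limit to equal $1$ along every subsequence tending to infinity. Hence $c = 0$, as desired.

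The argument is short and the only mild subtlety is handling the fact that $n_k + \phi(n_k)$ need not be an integer if $\phi$ is real-valued; this is cosmetic and can be absorbed either by interpreting $\phi$ on $\mathbb{N}$ via its nondecreasing extension, or by replacing $n_k+\phi(n_k)$ with $n_k + \lfloor\phi(n_k)\rfloor$ and absorbing the harmless $O(1)$ perturbation into $\varepsilon$. I do not anticipate any genuine obstacle beyond this, since monotonicity and a single application of the definition of $\mathcal{H}$ do all the work; Lemma~\ref{condition equivalent} is not even needed here (though it would yield an alternative proof by taking $\delta$ large and extracting an even stronger contradiction).
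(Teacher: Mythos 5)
Your proposal is correct and follows essentially the same route as the paper: argue by contradiction, extract a subsequence realizing the liminf, use the uniform lower bound $\phi(m)\geq(c-\varepsilon)m$ together with monotonicity to bound $\phi\bigl(n_k+\phi(n_k)\bigr)$ from below, and conclude that the ratio in the defining condition of $\mathcal{H}$ stays bounded away from $1$. The only (cosmetic) differences are that you fix $\varepsilon$ small up front rather than letting $\varepsilon\to 0$ at the end, and you apply the lower bound directly at $n_k+\phi(n_k)$ instead of first passing to $(1+\eta-\varepsilon)n_k$ via monotonicity as the paper does.
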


\begin{proof}
We assume that $\liminf\limits_{n \to \infty} \phi(n)/n= \eta >0$. Let $\{n_k\}_{k \geq 1}$ be a subsequence such that $\eta = \lim\limits_{k \to \infty} \phi(n_k)/n_k$. For any $\varepsilon>0$, there exists $K£º=K_\varepsilon >0$ such that for all $k \geq K$, we have $\phi(n_k)\geq (\eta -\varepsilon)n_k$.
By the monotonic property of $\phi$, we deduce that
\[
\phi\big(n_k+\phi(n_k)\big) \geq \phi\big(n_k+(\eta-\varepsilon)n_k\big) \geq (\eta-\varepsilon)(1+\eta-\varepsilon)n_k
\]
for any $k \geq K$. Therefore, $\phi\big(n_k+\phi(n_k)\big)/\phi(n_k) \geq (\eta-\varepsilon)(1+\eta-\varepsilon)n_k/\phi(n_k)$ for any $k \geq K$ and hence that
\[
\limsup_{k \to \infty} \frac{\phi\big(n_k+\phi(n_k)\big)}{\phi(n_k)} \geq \limsup_{k \to \infty} \frac{(\eta-\varepsilon)(1+\eta-\varepsilon)n_k}{\phi(n_k)} =
(1+\eta-\varepsilon)\left(1-\frac{\varepsilon}{\eta}\right).
\]
Since $\varepsilon >0$ is arbitrary, we have
\[
\limsup_{n \to \infty} \frac{\phi(n +\phi(n))}{\phi(n)} \geq 1+\eta >1,
\]
which contradicts the condition $\phi \in \mathcal{H}$.
\end{proof}

By the relation between $r_n(x)$ and $\ell_n(x)$, we state that

\begin{lemma}\label{rn subset}
Let $\phi \in \mathcal{H}$. Then
\[
\left\{x \in [0,1): \limsup_{n \to \infty} \frac{\ell_n(x)}{\phi(n)} = 1 \right\} \subseteq \left\{x \in [0,1): \limsup_{n \to \infty} \frac{r_n(x)}{\phi(n)} = 1 \right\}.
\]
\end{lemma}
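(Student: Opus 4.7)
The plan is to show that if $x$ satisfies $\limsup_{n \to \infty} \ell_n(x)/\phi(n) = 1$, then $\limsup_{n \to \infty} r_n(x)/\phi(n) = 1$ as well. I will establish the matching upper and lower bounds separately, both exploiting the obvious relation that every run of zeros counted by $r_n(x)$ is sandwiched between two $\ell_k(x)$'s and that, conversely, any $\ell_k(x)$ contributes to $r_m(x)$ once $m$ is large enough to contain the corresponding run.

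For the upper bound, I will use the elementary inequality
\[
r_n(x) \leq \max_{0 \leq k \leq n-1} \ell_k(x),
\]
which follows directly from the definitions: any maximal string of consecutive zeros lying inside the first $n$ digits begins right after some position $k<n$, so it has length at most $\ell_k(x)$. Fixing $\varepsilon>0$, the hypothesis provides $N$ with $\ell_k(x) \leq (1+\varepsilon)\phi(k)$ for all $k \geq N$; then monotonicity of $\phi$ together with $\phi(n)\to\infty$ allows the finitely many $k<N$ to be absorbed, yielding $\limsup_{n \to \infty} r_n(x)/\phi(n) \leq 1+\varepsilon$. Letting $\varepsilon \to 0^+$ finishes this direction.

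For the lower bound, I will select a subsequence $\{n_k\}$ with $\ell_{n_k}(x)/\phi(n_k) \to 1$. The $\ell_{n_k}(x)$ consecutive zeros situated immediately after position $n_k$ terminate at position $m_k := n_k + \ell_{n_k}(x)$, so $r_{m_k}(x) \geq \ell_{n_k}(x)$. Writing
\[
\frac{r_{m_k}(x)}{\phi(m_k)} \geq \frac{\ell_{n_k}(x)}{\phi(n_k)} \cdot \frac{\phi(n_k)}{\phi(n_k + \ell_{n_k}(x))},
\]
the first ratio tends to $1$ by choice of the subsequence. Since $\ell_{n_k}(x) \leq 2\phi(n_k)$ for $k$ large and $\phi$ is nondecreasing, the second ratio is bounded below by $\phi(n_k)/\phi(n_k + 2\phi(n_k))$, which tends to $1$ by Lemma \ref{condition equivalent} applied with $\delta=2$. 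Hence $\limsup_{m \to \infty} r_m(x)/\phi(m) \geq 1$.

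The main obstacle is the discrepancy between the index $n_k$ where $\ell$ is near-maximal and the index $m_k$ where $r$ inherits that value: absent some regularity on $\phi$, the denominator $\phi(m_k)$ could grow enough relative to $\phi(n_k)$ to break the lower bound. The condition (\ref{condition}) defining $\mathcal{H}$, packaged via Lemma \ref{condition equivalent}, is precisely what neutralizes this displacement, and once that tool is in hand the proof reduces to the definitional bookkeeping described above.
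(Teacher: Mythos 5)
Your argument is correct and follows essentially the same route as the paper: the upper bound via $r_n(x)\leq\max_{k<n}\ell_k(x)$ together with the monotonicity of $\phi$, and the lower bound via $r_{n+\ell_n(x)}(x)\geq\ell_n(x)$ combined with the eventual bound $\ell_n(x)\leq 2\phi(n)$ and Lemma \ref{condition equivalent} to control $\phi(n)/\phi(n+\ell_n(x))$. If anything, your $\varepsilon$-splitting of the indices $k<N$ in the upper bound is slightly more careful than the paper's appeal to $\limsup_n \ell_{k_n}(x)/\phi(k_n)\leq\limsup_n\ell_n(x)/\phi(n)$, which silently ignores the case where $k_n$ stays bounded.
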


\begin{proof}
On the one hand, for any $x \in [0,1)$, in view of the definitions of $r_n(x)$ and $\ell_n(x)$, there exists $1 \leq k_n := k_n(x) < n$ such that $r_{n}(x) = \ell_{k_n}(x)$ and hence
\[
\frac{r_{n}(x)}{\phi(n)} =\frac{\ell_{k_n}(x)}{\phi(n)} \leq \frac{\ell_{k_n}(x)}{\phi(k_n)}
\]
since $\phi$ is positive and nondecreasing. Therefore,
\begin{equation}\label{rn1}
\limsup_{n \to \infty}\frac{r_{n}(x)}{\phi(n)} \leq
\limsup_{n \to \infty}\frac{\ell_{k_n}(x)}{\phi(k_n)} \leq
\limsup_{n \to \infty}\frac{\ell_{n}(x)}{\phi(n)}.
\end{equation}
We suppose that $\phi \in \mathcal{H}$. On the other hand, for any $x \in [0,1)$,
note that $r_{n+\ell_n(x)}(x) = \max_{1 \leq k \leq n}\ell_k(x)$, then $r_{n+\ell_n(x)}(x) \geq \ell_{n}(x)$ and hence
\begin{equation}\label{rn2}
\limsup_{n \to \infty} \frac{r_n(x)}{\phi(n)} \geq \limsup_{n \to \infty} \frac{r_{n+\ell_n(x)}(x)}{\phi(n+\ell_n(x))} \geq \limsup_{n \to \infty} \left(\frac{\phi(n)}{\phi(n+\ell_n(x))} \cdot  \frac{\ell_n(x)}{\phi(n)}\right).
\end{equation}
Now let $x$ be a real number satisfying $\limsup\limits_{n \to \infty} \ell_n(x) /\phi(n) = 1$. Then there exists $N:=N(x) >0$ such that $0 \leq \ell_n(x) \leq 2 \phi(n)$ for all $n \geq N$ and hence $\phi(n) \leq \phi(n + \ell_n(x)) \leq \phi(n + 2\phi(n))$ since $\phi$ is nondecreasing. By Lemma \ref{condition equivalent}, we eventually deduce that
\[
\lim_{n \to \infty}  \frac{\phi(n)}{\phi(n+\ell_n(x))} =1.
\]
In view of (\ref{rn2}), we obtain that
\[
\limsup_{n \to \infty} \frac{r_n(x)}{\phi(n)} \geq \limsup_{n \to \infty} \left(\frac{\phi(n)}{\phi(n+\ell_n(x))} \cdot  \frac{\ell_n(x)}{\phi(n)}\right) = \limsup_{n \to \infty} \frac{\ell_n(x)}{\phi(n)} =1.
\]
Combing this with (\ref{rn1}), we complete the proof.
\end{proof}

\begin{theorem}
Let $\phi \in \mathcal{H}$. Then
$\dim_{\rm H} G_\phi =1$.
\end{theorem}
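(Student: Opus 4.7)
The plan is to exploit the two lemmas immediately preceding the theorem together with Proposition~\ref{F quan}, so the argument reduces to a one-line chain of inclusions and dimension identities. The upper bound $\dim_{\rm H} G_\phi \leq 1$ is automatic since $G_\phi \subseteq [0,1)$, so the entire task is a matching lower bound.

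For the lower bound, I would observe that Lemma~\ref{rn subset} gives the inclusion
\[
F_\phi = \left\{x \in [0,1): \limsup_{n \to \infty} \frac{\ell_n(x)}{\phi(n)} = 1 \right\} \subseteq G_\phi,
\]
so $\dim_{\rm H} G_\phi \geq \dim_{\rm H} F_\phi$. Proposition~\ref{F quan} applies because $\phi \in \mathcal{H}$ guarantees that $\phi$ is positive, nondecreasing, and tends to infinity, so
\[
\dim_{\rm H} F_\phi = \frac{1}{1 + \liminf_{n \to \infty} \phi(n)/n}.
\]
Finally, Lemma~\ref{infequivalent} tells us that for any $\phi \in \mathcal{H}$ we have $\liminf_{n \to \infty} \phi(n)/n = 0$, which forces $\dim_{\rm H} F_\phi = 1$, and hence $\dim_{\rm H} G_\phi \geq 1$.

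There is essentially no obstacle here: all the real work has already been done in establishing Proposition~\ref{F quan} (via the Cantor-like construction and the approximation method for the $\beta$-shift) and in the structural lemmas about $\mathcal{H}$. The only point worth double-checking is that the hypotheses of Proposition~\ref{F quan} are genuinely met by every $\phi \in \mathcal{H}$, which is immediate from the definition of $\mathcal{H}$, and that Lemma~\ref{rn subset} applies verbatim. Thus the proof is a short assembly of the preceding results.
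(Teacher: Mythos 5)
Your proposal is correct and follows exactly the paper's own argument: the inclusion $F_\phi \subseteq G_\phi$ from Lemma~\ref{rn subset}, the dimension formula of Proposition~\ref{F quan}, and $\liminf_{n\to\infty}\phi(n)/n = 0$ from Lemma~\ref{infequivalent}. No differences worth noting.
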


\begin{proof}
In view of Lemma \ref{rn subset}, we know that $F_\phi \subseteq G_\phi$ and hence it follows from Proposition \ref{F quan} that
\[
\dim_{\rm H} G_\phi \geq \frac{1}{1+\liminf\limits_{n \to \infty} \phi(n)/n}.
\]
Let $\phi \in \mathcal{H}$. By Lemma \ref{infequivalent}, we have $\liminf\limits_{n \to \infty} \phi(n)/n =0$. Therefore, we get $\dim_{\rm H} G_\phi =1$.
\end{proof}

\begin{corollary}
For any $\alpha \geq 0$,
\[
\dim_{\rm H} \left\{x \in [0,1): \limsup_{n \to \infty} \frac{r_n(x)}{\log_{\beta} n} = \alpha \right\} =1.
\]
\end{corollary}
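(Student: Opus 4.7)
The corollary is essentially a direct specialization of the preceding theorem, once a correct choice of $\phi$ is made for each $\alpha$. My plan is to split into the cases $\alpha>0$ and $\alpha=0$, each time exhibiting a function $\phi\in\mathcal{H}$ that places the target set in the range of the preceding theorem.

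For $\alpha>0$, I would take $\phi(n)=\alpha\log_\beta n$ (defined for $n\geq 2$, and trivially for $n=1$). This $\phi$ is clearly positive, nondecreasing and diverges. The only nontrivial point is verifying the membership condition \eqref{condition}, namely
\[
\lim_{n\to\infty}\frac{\phi(n+\phi(n))}{\phi(n)}=\lim_{n\to\infty}\frac{\log_\beta\!\bigl(n+\alpha\log_\beta n\bigr)}{\log_\beta n}=1,
\]
which follows by writing $\log_\beta(n+\alpha\log_\beta n)=\log_\beta n+\log_\beta(1+\alpha(\log_\beta n)/n)$ and observing that the second term is $o(1)$. Hence $\phi\in\mathcal{H}$, and the preceding theorem yields $\dim_{\rm H} G_\phi=1$. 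Finally, note that
\[
G_\phi=\Bigl\{x\in[0,1):\limsup_{n\to\infty}\tfrac{r_n(x)}{\alpha\log_\beta n}=1\Bigr\}=\Bigl\{x\in[0,1):\limsup_{n\to\infty}\tfrac{r_n(x)}{\log_\beta n}=\alpha\Bigr\},
\]
so this case is finished.

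For $\alpha=0$, the set in question is $\{x:\limsup r_n(x)/\log_\beta n=0\}$, and the strategy is to show it contains some $G_\phi$ with $\phi\in\mathcal{H}$ growing strictly slower than $\log_\beta n$. I would take $\phi(n)=(\log n)^{1/2}$ (adjusted on finitely many small $n$), which belongs to $\mathcal{H}$ by the second displayed example in the definition of $\mathcal{H}$. For any $x\in G_\phi$ we have $\limsup_n r_n(x)/\phi(n)=1$, so $r_n(x)\leq 2\phi(n)$ for all $n$ large, which gives
\[
\limsup_{n\to\infty}\frac{r_n(x)}{\log_\beta n}\leq \limsup_{n\to\infty}\frac{2\phi(n)}{\log_\beta n}=0.
\]
Combined with $r_n(x)\geq 0$, this forces $\limsup r_n(x)/\log_\beta n=0$. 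Thus $G_\phi\subseteq\{x:\limsup r_n(x)/\log_\beta n=0\}$, and the preceding theorem gives dimension $1$ for $G_\phi$, hence for the larger set as well.

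I do not expect any genuine obstacle: the only items to check carefully are (i) that $\phi(n)=\alpha\log_\beta n$ satisfies condition \eqref{condition}, which is a routine asymptotic computation, and (ii) the soft inclusion argument for $\alpha=0$. The main theorem $\dim_{\rm H} G_\phi=1$ does all the real work via Proposition \ref{F quan} and Lemma \ref{infequivalent}; the corollary then merely selects appropriate $\phi$'s for each level.
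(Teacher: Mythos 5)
Your proof is correct and is exactly the intended specialization: the paper states this corollary without proof, but your choice $\phi(n)=\alpha\log_\beta n$ for $\alpha>0$ (which lies in $\mathcal{H}$ by the paper's own example $\alpha(\log n)^\gamma$) and the containment argument via a slower-growing $\phi\in\mathcal{H}$ for $\alpha=0$ mirror precisely how the paper handles the analogous Corollary \ref{6} for $T_\beta^n x$. No gaps; both the verification of condition (\ref{condition}) and the $\alpha=0$ inclusion are sound.
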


{\bf Acknowledgement:}
The work was supported by NSFC 11371148, 11201155 and Guangdong Natural Science Foundation 2014A030313230. The authors thank Professor Christoph Bandt for the comments on the results.

\end{document}